\newcommand{\nc}{\newcommand}
\newenvironment{proof}{{\noindent \textbf{Proof}\,\,}}{\hspace*{\fill}$\Box$\medskip}
\newtheorem{theorem}[equation]{Theorem}
\newtheorem{exercise}[equation]{Exercise}
\newtheorem{proposition}[equation]{Proposition}
\newtheorem{lemma}[equation]{Lemma}
\newtheorem{claim}[equation]{Claim}
\theoremstyle{definition}
\theoremstyle{remark}
\nc{\fa}{{\mathfrak{a}}}
\nc{\fb}{{\mathfrak{b}}}
\nc{\fg}{{\mathfrak{g}}}
\nc{\fh}{{\mathfrak{h}}}
\nc{\fj}{{\mathfrak{j}}}
\nc{\fn}{{\mathfrak{n}}}
\nc{\fu}{{\mathfrak{u}}}
\nc{\fp}{{\mathfrak{p}}}
\nc{\fr}{{\mathfrak{r}}}
\nc{\ft}{{\mathfrak{t}}}
\nc{\fsl}{{\mathfrak{sl}}}
\nc{\fgl}{{\mathfrak{gl}}}
\nc{\hsl}{{\widehat{\mathfrak{sl}}}}
\nc{\hgl}{{\widehat{\mathfrak{gl}}}}
\nc{\hg}{{\widehat{\mathfrak{g}}}}
\nc{\chg}{{\widehat{\mathfrak{g}}}{}^\vee}
\nc{\hn}{{\widehat{\mathfrak{n}}}}
\nc{\chn}{{\widehat{\mathfrak{n}}}{}^\vee}
\nc{\fC}{{\mathfrak{C}}}
\nc{\fZ}{{\mathfrak{Z}}}
\nc{\pol}{{\text{Poles}}}
\nc{\BA}{{\mathbb{A}}}
\nc{\BC}{{\mathbb{C}}}
\nc{\BM}{{\mathbb{M}}}
\nc{\BN}{{\mathbb{N}}}
\nc{\BQ}{{\mathbb{Q}}}
\nc{\BF}{{\mathbb{F}}}
\nc{\BK}{{\mathbb{K}}}
\nc{\BP}{{\mathbb{P}}}
\nc{\BR}{{\mathbb{R}}}
\nc{\BZ}{{\mathbb{Z}}}
\nc{\CA}{{\mathcal{A}}}
\nc{\CB}{{\mathcal{B}}}
\nc{\CE}{{\mathcal{E}}}
\nc{\CF}{{\mathcal{F}}}
\nc{\tCF}{{\widetilde{\CF}}}
\nc{\oCF}{{\overline{\CF}}}
\nc{\CG}{{\mathcal{G}}}
\nc{\CI}{{\mathcal{I}}}
\nc{\CL}{{\mathcal{L}}}
\nc{\CM}{{\mathcal{M}}}
\nc{\CH}{{\mathcal{H}}}
\nc{\CN}{{\mathcal{N}}}
\nc{\CO}{{\mathcal{O}}}
\nc{\CP}{{\mathcal{P}}}
\nc{\CQ}{{\mathcal{Q}}}
\nc{\CR}{{\mathcal{R}}}
\nc{\CS}{{\mathcal{S}}}
\nc{\CT}{{\mathcal{T}}}
\nc{\CU}{{\mathcal{U}}}
\nc{\CV}{{\mathcal{V}}}
\nc{\CW}{{\mathcal{W}}}
\nc{\tCW}{{\widetilde{\CW}}}
\nc{\oCW}{{\overline{\CW}}}
\nc{\uu}{{U_q(\hgl_n)}}
\nc{\uup}{{U_q^+(\hgl_n)}}
\nc{\uug}{{U_q^\geq(\hgl_n)}}
\nc{\uus}{{U_q(\hsl_n)}}
\nc{\uuu}{{U_q(\hgl_1)}}
\nc{\nn}{{\mathbb{N}}}
\nc{\tT}{{T}}
\nc{\wfZ}{{\widetilde{\fZ}}}
\nc{\od}{{\overline{d}}}
\nc{\rg}{{\textrm{R}\Gamma}}
\nc{\erg}{{\emph{R}\Gamma}}
\nc{\id}{{\textrm{id}}}
\def\ph{\varphi}
\def\kk{{{\mathbb{K}}}}
\def\ph{\varphi}
\def\high{\text{ht }}
\def\sym{\text{Sym}}
\def\degu{\deg_{\searrow} \ }
\def\degd{\deg_{\nwarrow} \ }
\def\loccit{\emph{loc cit}}
\def\od{\overline{d}}
\def\oe{e}
\def\lamu{{\lambda \backslash \mu}}
\def\ld{\text{l.d. }}
\def\hd{\text{h.d. }}
\def\out{{\text{out}}}
\def\uu{U_v(\widehat{\fgl}_1)}
\def\tG{\widetilde{G}}
\begin{document}

\title[The $\lowercase{\frac mn}$ Pieri rule]{\Large{\textbf{The $\lowercase{\frac mn}$ Pieri rule}}}

\author[Andrei Negu\cb t]{Andrei Negu\cb t}
\address{Columbia University, Department of Mathematics, New York, USA}
\address{Simion Stoilow Institute of Mathematics, Bucharest, Romania}
\email{andrei.negut@@gmail.com}

\maketitle
\thispagestyle{empty}

\begin{abstract}

The Pieri rule is an important theorem which explains how the operators $\oe_k$ of multiplication by elementary symmetric functions act in the basis of Schur functions $s_\lambda$. In this paper, for any $m/n\in \BQ$ we study the relationship between the ``rational" version of the operators: 
$$
\oe_k^{m/n} : \Lambda \longrightarrow \Lambda
$$ 
given by the elliptic Hall algebra, and the ``rational" version $s_\lambda^{m/n}$ of the basis given by the Maulik-Okounkov stable basis construction. The answer is inspired by geometry, but relevant to combinatorics and representation theory.

\end{abstract}

\section{Introduction} \noindent In this paper, we study a certain algebra $\CA$ known by many names in the theory of quantum groups: the elliptic Hall algebra, the shuffle algebra, the doubly deformed $\CW_{1+\infty}$ algebra, the Ding-Iohara algebra, the spherical type $A_\infty$ double affine Hecke algebra, and quantum toroidal $\fgl_1$. For each rational number $\frac mn \in \BQ \cup \{\infty\}$, the algebra $\CA$ contains a commutative subalgebra isomorphic to a polynomial ring in countably many variables: 
$$
\BQ(q,t)[\oe_1^{m/n}, \oe_2^{m/n},...] \cong \CA_{m/n} \subset \CA
$$
where $q,t$ are parameters. The algebra $\CA$ acts on the ring of symmetric functions:
$$
\CA \curvearrowright \Lambda = \BQ(q,t)[x_1,x_2,...]^{\sym} 
$$ 
where $e_k^0 \in \CA_0$ acts on $\Lambda$ by the operator of multiplication with the $k-$th elementary symmetric function, and $e_k^\infty \in \CA_\infty$ acts on $\Lambda$ by the $k-$th Macdonald $q-$difference operator. The full algebra $\CA$ can be thought to interpolate between these two extremes, and we will spell out the interactions between generators for different $\frac mn$. 

%\begin{picture}(100,160)(-70,50)
%\label{fig0}

%\put(0,40){\line(1,0){200}}
%\put(0,0){\line(0,1){200}}
%\put(0,40){\line(2,1){220}}

%\put(0,80){\circle*{5}}
%\put(0,120){\circle*{5}}
%\put(0,160){\circle*{5}}
%\put(0,200){\circle*{5}}

%\put(40,0){\circle*{5}}
%\put(40,40){\circle*{5}}
%\put(40,80){\circle*{5}}
%\put(40,120){\circle*{5}}
%\put(40,160){\circle*{5}}
%\put(40,200){\circle*{5}}

%\put(80,0){\circle*{5}}
%\put(80,40){\circle*{5}}
%\put(80,80){\circle*{5}}
%\put(80,120){\circle*{5}}
%\put(80,160){\circle*{5}}
%\put(80,200){\circle*{5}}

%\put(120,0){\circle*{5}}
%\put(120,40){\circle*{5}}
%\put(120,80){\circle*{5}}
%\put(120,120){\circle*{5}}
%\put(120,160){\circle*{5}}
%\put(120,200){\circle*{5}}

%\put(160,0){\circle*{5}}
%\put(160,40){\circle*{5}}
%\put(160,80){\circle*{5}}
%\put(160,120){\circle*{5}}
%\put(160,160){\circle*{5}}
%\put(160,200){\circle*{5}}

%\put(200,0){\circle*{5}}
%\put(200,40){\circle*{5}}
%\put(200,80){\circle*{5}}
%\put(200,120){\circle*{5}}
%\put(200,160){\circle*{5}}
%\put(200,200){\circle*{5}}

%\put(210,37){$0$}
%\put(-5,208){$\infty$}
%\put(210,132){$\frac 12$}

%\put(37,27){$e_1^0$}
%\put(77,27){$e_2^0$}
%\put(117,27){$e_3^0$}
%\put(157,27){$e_4^0$}
%\put(197,27){$e_5^0$}

%\put(-17,77){$e_1^\infty$}
%\put(-17,117){$e_2^\infty$}
%\put(-17,157){$e_3^\infty$}
%\put(-17,197){$e_4^\infty$}

%\put(77,67){$e_1^{1/2}$}
%\put(157,107){$e_2^{1/2}$}

%\end{picture}

\textbf{} \\
The same principle applies for bases of the representation $\Lambda$. At $\frac mn = \frac 01 = 0$, we consider the basis of Schur functions, in which the operators of multiplication by symmetric functions are described quite nicely by the Pieri rule. At $\frac mn = \frac 10 = \infty$, we consider the basis of Macdonald polynomials in which the $q-$difference operators are diagonal, and hence quite presentable. We will seek to interpolate between these two bases, i.e. to define a basis:
\begin{equation}
\label{eqn:stab}
\{s_\lambda^{m/n}\}_{\lambda \text{ partition}} 
\end{equation}
of $\Lambda$, for any $\frac mn \in \BQ$. There are a number of properties one wants from such a basis, but the one we will mostly be concerned with is that the generators: 
$$
\oe_k^{m/n} \in \CA_{m/n} \subset \CA
$$ 
act ``nicely" in it. Such a choice of \eqref{eqn:stab} is given by the Maulik-Okounkov stable basis (\cite{MO2}), which we recall in Section \ref{sec:stab}. We will prove the following result: \\

\begin{theorem}
\label{thm:stab}

\textbf{(The $m/n$ Pieri rule):} For any coprime $(m,n) \in \BZ \times \BN$ and any positive integer $k$, we have:
\begin{equation}
\label{eqn:pieri}
\oe_k^{m/n} \cdot s^{m/n}_\mu = \sum s^{m/n}_\lambda  (-1)^{\emph{ht}} \prod_{i=1}^k\prod_{j=1}^n \chi_{j}(B_i)^{\left \lfloor \frac {mj}n \right \rfloor - \left \lfloor \frac {m(j-1)}n \right \rfloor} 
\end{equation}
where the sum goes over all vertical $k-$strips of $n-$ribbons of shape $\lamu$. We write $\emph{ht}$ for the height \footnote{This notion is called spin in \cite{LLT}} of such a $k-$strip, and:
$$
\chi_{j}(B) = q^{x}t^{-y}
$$
where $(x,y)$ are the coordinates of the $j-$th box in the ribbon $B$, counted in northwest-southeast direction. These notions will be defined in Subsection \ref{sub:ribbon}. \footnote{By replacing the word ``vertical" with the word ``horizontal", we obtain formulas for operators of multiplication by complete symmetric functions}

\end{theorem}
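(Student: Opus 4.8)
The plan is to turn \eqref{eqn:pieri} into a statement about matrix coefficients in the torus fixed point basis and then to let the ribbon combinatorics emerge from a residue computation in the shuffle presentation of $\CA$. Recall from Section \ref{sec:stab} that $\Lambda$ is a localization of $\bigoplus_{d\ge 0} K_T\big(\hilb_d(\BC^2)\big)$, that monomial ideals give a fixed point basis $\{|\mu\rangle\}$, and that the stable basis is, by construction, triangular with leading term $|\mu\rangle$: $s^{m/n}_\mu = |\mu\rangle + \sum_\nu c_{\mu\nu}|\nu\rangle$, the sum being over $\nu$ with $|\nu|=|\mu|$ that are strictly below $\mu$ in the order attached to the slope $m/n$. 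The generator $\oe_k^{m/n}\in\CA_{m/n}$ lies in the bidegree $k(n,m)$ piece of the $\BZ^2$-grading of $\CA$, hence is represented by an explicit symmetric rational function of $kn$ variables whose action on $|\mu\rangle$ is a sum over chains $\mu=\mu^{(0)}\subset\mu^{(1)}\subset\dots\subset\mu^{(kn)}=\lambda$ of single-box additions, weighted by $q,t$-factors read off from the added boxes. So $\oe_k^{m/n}\cdot s^{m/n}_\mu$ is a priori a messy combination of the $s^{m/n}_\lambda$, and \eqref{eqn:pieri} asserts that after inverting the triangular transition matrix almost everything collapses. As a preliminary simplification I would use the geometric operator $D$ of tensoring by the determinant of the tautological bundle on $\hilb$: it implements the translation automorphism of $\CA$ (so $D\,\oe_k^{m/n}\,D^{-1}=\oe_k^{(m+n)/n}$ up to normalization) and sends $s^{m/n}_\mu$ to a $\mu$-dependent scalar times $s^{(m+n)/n}_\mu$, so that conjugation multiplies each matrix coefficient by $\prod_i\prod_{j=1}^n\chi_j(B_i)$ — exactly the effect of replacing $m$ by $m+n$ in the exponents $\lfloor mj/n\rfloor-\lfloor m(j-1)/n\rfloor$. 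Hence one may assume $0\le m<n$, the case $n=1,\,m=0$ being the classical Pieri rule and the base of this reduction.

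\emph{Vanishing.} Next I would show that the coefficient of $s^{m/n}_\lambda$ in $\oe_k^{m/n}\cdot s^{m/n}_\mu$ is zero unless $\lamu$ is a vertical $k$-strip of $n$-ribbons. The first point is that the slope-$m/n$ structure is $n$-periodic: from the shuffle presentation, $\oe_k^{m/n}$ maps $|\nu\rangle$ into the span of the $|\lambda\rangle$ with $\lambda\supseteq\nu$ and $\lamu$ a union of $n$-ribbons (equivalently, it preserves $n$-cores), and I expect the decomposition of $\Lambda$ by $n$-cores to be respected by $s^{m/n}_\bullet$ as well. Granting this, $\lambda$ and $\mu$ have the same $n$-core and $\lambda$ has exactly $k$ more $n$-ribbons. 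That $\lamu$ is moreover a genuine \emph{vertical strip} of ribbons is the residual imprint of using the elementary — rather than the complete — generator, and here I would combine the explicit shuffle formula with the support and degree axioms of the Maulik--Okounkov stable basis (\cite{MO2}), just as these axioms force the analogous vanishing in the $\frac 01$ case.

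\emph{Evaluation.} For the surviving coefficients I would induct on $k$. Inside $\CA$ the $\oe_k^{m/n}$ are tied to $\oe_1^{m/n}$ and to the commuting power-sum elements $P_{j(n,m)}\in\CA_{m/n}$ by the usual symmetric-function identities; combined with the commutation relations between the $P_{j(n,m)}$ and the operators that create the stable basis, this reduces matters to the rank-one case — the action of $P_{(n,m)}$, which adds a single $n$-ribbon $B$. A direct equivariant-localization and partial-fraction computation on $\hilb_\bullet(\BC^2)$ should then produce the coefficient $\prod_{j=1}^n\chi_j(B)^{\lfloor mj/n\rfloor-\lfloor m(j-1)/n\rfloor}$, the exponents being precisely the successive risers of the Christoffel staircase of slope $m/n$ — which is the only place where the rational parameter enters. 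When $k$ ribbons are assembled, the ``vertical'' constraint and the sign $(-1)^{\text{ht}}$ fall out of the ordering of the box-additions; this is clearest after passing to the $n$-quotient, where a vertical $k$-strip of $n$-ribbons becomes $k$ single boxes distributed among the $n$ colours, the height becomes the spin of \cite{LLT}, and each colour carries a twisted copy of the ordinary Pieri rule.

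The main obstacle will be this last evaluation: proving that the sum over box-adding chains for $\oe_k^{m/n}|\mu\rangle$, once composed on both sides with the inverse of the triangular matrix $(c_{\mu\nu})$, genuinely collapses to $(-1)^{\text{ht}}\prod_{i=1}^k\prod_{j=1}^n\chi_j(B_i)^{\lfloor mj/n\rfloor-\lfloor m(j-1)/n\rfloor}$ — i.e.\ that every cross term between the off-diagonal part of $s^{m/n}_\mu$, the intermediate chains, and the off-diagonal part of $s^{m/n}_\lambda$ cancels. I expect to handle this by a careful residue analysis, matching the poles of the shuffle element representing $\oe_k^{m/n}$ against the zeros built into the stable basis, organized colour by colour in the $n$-quotient so that the bookkeeping reduces to $n$ twisted copies of the $\frac 01$ situation; the vanishing statement together with the rank-one evaluation then determines the answer uniquely, by triangularity of $s^{m/n}_\bullet$ over $\{|\mu\rangle\}$.
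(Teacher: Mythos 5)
Your plan has the right general shape (work in the fixed--point/Macdonald basis, exploit the degree axioms of the stable basis, and let the ribbon combinatorics emerge from the shuffle presentation), but two of its load-bearing steps are not correct or not supplied. First, the assertion in your ``vanishing'' step that $\oe_k^{m/n}$ maps $|\nu\rangle$ into the span of those $|\lambda\rangle$ with $\lambda\backslash\nu$ a union of $n$-ribbons (so that it preserves $n$-cores at the level of the fixed-point basis) is false: by \eqref{eqn:shuf} the matrix coefficient on $M_\lambda$ is $E_k^{m/n}(\lambda\backslash\nu)$ times a product of nonzero linear factors, and this is generically nonzero for \emph{every} skew shape of size $kn$, ribbon or not. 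The ribbon-strip support is a property of the expansion in the \emph{stable} basis only, i.e.\ it is exactly the cancellation you are trying to prove, and it cannot be imported as a support condition upstream. What actually forces it is a sharp quantitative estimate: one must bound $\degu$ and $\degd$ of $E_k^{m/n}(\lambda\backslash\mu)$ for arbitrary skew shapes, show the upper bound $\frac mn(o_\lambda-o_\mu)+\#_{\lamu}+\frac{k(n-1)}2$ is attained exactly when $\lamu$ is a vertical $k$-strip of $n$-ribbons, and compute the leading monomial (this is Proposition \ref{prop:main}, proved via the interpolation recursion \eqref{eqn:lagrange} and the bubbling game of Subsection \ref{sub:game}). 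Your proposal defers precisely this to a ``careful residue analysis,'' which is the entire technical content of the theorem, not a routine verification.

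Second, even granting the vanishing and the rank-one evaluation, triangularity of $s^{m/n}_\bullet$ over the fixed points does not determine the coefficients $b^\lambda$: the degree windows \eqref{eqn:lim1}--\eqref{eqn:lim2} are half-open, and the descending induction that extracts $b^\lambda$ from the highest-degree term of $d^\lambda$ only closes if one knows a priori that $e_k^{m/n}$ acts on the stable basis with coefficients in $\BZ[q^{\pm1},t^{\pm1}]$ (so that each $b^\lambda$ is a Laurent polynomial concentrated on a single diagonal $q^\alpha t^\beta$ weight). This integrality \eqref{eqn:integrality} is a genuinely geometric input --- the flag Hilbert scheme is a Lagrangian correspondence taking classes supported on $L_\mu$ to integral combinations of classes supported on the $L_\lambda$ --- and it is not a consequence of the shuffle formalism or of triangularity; your argument never identifies it. (Your preliminary reduction to $0\le m<n$ via the determinant line is consistent with $s_\lambda^{r+1}=\nabla s_\lambda^r/\prod_{\square\in\lambda}\chi_\square$, but it is harmless rather than helpful: the estimates above must be proved for all slopes anyway, and the induction on $k$ through power sums that you sketch would still require the same degree bounds for the intermediate elements of $\CA_{m/n}$.)
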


\textbf{} \\
Though our proof of Theorem \ref{thm:stab} only covers the case when $\gcd(m,n)=1$, note that when $m=0$, the operators involved are:
$$
e_k^{0/n} = \text{multiplication by }e_k(x_1^n,x_2^n,...)
$$
Since the basis $s_\lambda^0$ consists of the usual Schur functions, the analogue of \eqref{eqn:pieri} at $m=0$ coincides with the Lascoux-Leclerc-Thibon ribbon tableau formula (12) of \cite{LLT}. If we further also specialize to $n=1$, we obtain the usual Pieri rule for multiplication by elementary symmetric functions. Therefore, our result can be thought of as a deformation of the results of \cite{LLT} to general $m$, as will be explained in Section \ref{sec:action} in connection to LLT polynomials.

%\footnote{Theorem \ref{thm:stab} holds for all $(m,n)$, but we will only prove it for coprime numbers, and so strictly speaking does not specialize to the result of \cite{LLT}. However, the proof carries over to the case of arbitrary $m$ without any conceptual difficulty, though it would require introducing quite a lot of additional (and potentially obscuring) notations. We opt to not give it in the interest of clarity}.

%Another application of Theorem \ref{thm:stab} is that it allows us to define an action of $\UU$ on $\Lambda$, which deforms the construction in Theorems 5.1 and 5.4 of \cite{LLT} to general $m$. 

\textbf{} \\
Another use of Theorem \ref{thm:stab} is the particular case $k=1$ and $\mu = \emptyset$, when formula \eqref{eqn:pieri} becomes:
$$
e_1^{m/n} \cdot 1 = \sum_{i=1}^{n} s_{(i,1^{n-i})}^{m/n} \cdot q^{\sum_{j=1}^{i-1} \left \lceil \frac {mj}n \right \rceil} (-t)^{\sum_{j=1}^{n-i} \left \lfloor \frac {mj}n \right \rfloor} 
$$
The above equality gives a new interpretation of the ``symmetric function" side of the rational shuffle conjecture, and it is expected to equal the ``combinatorial side" provided by the Hikita polynomial. See \cite{GN} for a review of the rational shuffle conjecture, and also for connections with knot theory and representation theory. According to a general framework in representation theory, the above equality reflects a certain resolution of the unique finite-dimensional irreducible module of the rational Cherednik algebra (with quantization parameter $c = \frac mn$) by standard modules corresponding to hook diagrams. This should be a bigraded version of the BGG-Koszul resolution of \cite{EGL}. In general, one expects formulas \eqref{eqn:pieri} to govern the parabolic induction/restriction of standard modules for rational Cherednik algebras, endowed with an additional filtration that has not been completely defined yet (see \cite{shan} for an overview).

\textbf{} \\
The structure of this paper is the following. In Section \ref{sec:def} we recall certain basic definitions concerning symmetric functions, partitions and Young diagrams. In Section \ref{sec:ell} we recall the definition of the algebra $\CA$, its alternative presentation as a shuffle algebra, and the way the shuffle algebra helps us understand the action of $\CA$ on the ring of symmetric functions. In Section \ref{sec:stab}, we recall the definition of the stable basis $s_\lambda^{m/n}$ from \cite{MO2} and prove Theorem \ref{thm:stab}. In Section \ref{sec:action}, we show how to use formulas \eqref{eqn:pieri} to obtain LLT polynomials. %and an action of $\UU$ on a vector space indexed by partitions, which depend on the additional parameter $m$.

\textbf{} \\
I would like to thank Andrei Okounkov and Davesh Maulik for teaching me a lot of beautiful mathematics, in particular their stable basis construction that constitutes the core of this paper. I would like to thank Adriano Garsia for teaching me much about the combinatorics of symmetric functions. I also thank the Research Institute for Mathematical Sciences (Kyoto) and the Japan Society for the Promotion of Science for supporting me while this paper was being written. \\

\section{Definitions and notations: symmetric functions and partitions}
\label{sec:def}

\subsection{} Much of the present paper is concerned with the ring of symmetric functions in infinitely many variables $x_1,x_2,...$, over the field $\BK = \BQ(q,t)$:
$$
\Lambda = \BK[x_1,x_2,...]^{\sym}
$$
There are a number of bases of this vector space, perhaps the most basic one consisting of monomial symmetric functions:
$$
m_\lambda = \sym \left[ x_1^{\lambda_1}x_2^{\lambda_2}... \right]
$$
where $\lambda$ goes over all partitions. Particular instances of these are the power-sum functions:
$$
p_k = m_{(k)} = x_1^k + x_2^k + ...
$$
as well as the elementary and complete symmetric functions:
$$
e_k = m_{(1,1,...,1)} = \sum_{i_1<...<i_k} x_{i_1}... x_{i_k}, \qquad \qquad h_k = \sum_{\lambda \vdash k} m_\lambda = \sum_{i_1 \leq ... \leq i_k} x_{i_1}... x_{i_k}
$$
The ring $\Lambda$ is generated by each of these particular symmetric functions:
$$
\Lambda = \BK[p_1,p_2,...] = \BK[e_1,e_2,...] = \BK[h_1,h_2,...]
$$
as an algebra, while as a vector space it is spanned by:
$$
p_\lambda = p_{\lambda_1}p_{\lambda_2}..., \qquad \text{or} \qquad e_\lambda = e_{\lambda_1}e_{\lambda_2}..., \qquad \text{or} \qquad h_\lambda = h_{\lambda_1}h_{\lambda_2}..., 
$$
as $\lambda = (\lambda_1 \geq \lambda_2 \geq ...)$ goes over all partitions of natural numbers. \\

\subsection{} We will consider two inner products on $\Lambda$, and note that both are graded, symmetric and respect the bialgebra (product and coproduct) structure of $\Lambda$. We will not go into what this means, but observe that such an inner product is uniquely determined by the pairing of $p_k$ with itself:
$$
\langle p_k,p_k \rangle = k
$$
By Gram-Schimdt, there is a unique orthogonal basis $\{s_\lambda\}$ of $\Lambda$ such that:
$$
\langle s_\lambda, s_\mu \rangle = 0 
$$
if $\lambda \neq \mu$, and:
$$
s_\lambda = m_\lambda + \sum_{\mu \lhd \lambda} m_\mu d^\mu_\lambda, \qquad d_\lambda^\mu \in \BZ
$$
where the \textbf{dominance ordering} on partitions is:
\begin{equation}
\label{eqn:dom}
\mu \unlhd \lambda \qquad \text{if} \qquad \mu_1+...+\mu_i \leq \lambda_1+...+\lambda_i \quad \forall i
\end{equation}
and $|\mu|=|\lambda|$. The symmetric polynomials $s_\lambda$ are called \textbf{Schur functions}, and they play a very important role in representation theory as the characters of irreducible representations of the special linear group. Note that we have:
\begin{equation}
\label{eqn:andra}
e_k = s_{(1,...,1)}, \qquad \quad h_k = s_{(k)}, \quad \qquad p_k = \sum_{i=0}^{k-1} (-1)^{i} s_{(k-i,1^i)}
\end{equation}

\subsection{} There is a one-to-one correspondence between partitions and Young diagrams, the latter being simply stacks of $1\times 1$ boxes placed in the corner of the first quadrant. For example, the following Young diagram:

\begin{picture}(100,160)(-90,-15)
\label{fig}

\put(17,17){$1$}
\put(15,57){$t^{-1}$}
\put(17,97){$t^{-2}$}
\put(57,17){$q$}
\put(53,57){$qt^{-1}$}
\put(97,17){$q^2$}
\put(92,57){$q^2t^{-1}$}
\put(137,17){$q^3$}

\put(0,0){\line(1,0){160}}
\put(0,40){\line(1,0){160}}
\put(0,80){\line(1,0){120}}
\put(0,120){\line(1,0){40}}

\put(0,0){\line(0,1){120}}
\put(40,0){\line(0,1){120}}
\put(80,0){\line(0,1){80}}
\put(120,0){\line(0,1){80}}
\put(160,0){\line(0,1){40}}

\put(65,-20){\mbox{Figure 1}}

\end{picture}

\text{}\\
represents the partition $(4,3,1)$, because it has 4 boxes on the first row, 3 boxes on the second row, and 1 box on the third row. The monomials displayed in Figure 1 are called the \textbf{weights} of the boxes they are in, and are defined by the formula:
\begin{equation}
\label{eqn:weight}
\chi_\square = q^{x} t^{-y}
\end{equation}
where $(x,y)$ are the coordinates of the southwest corner of the box in question. We call the integer:
\begin{equation}
\label{eqn:content}
o_\square = x-y
\end{equation}
the \textbf{content} of the box, and note that the content is constant across diagonals (in this paper, the word ``diagonal" will only refer to those in southwest-northeast direction). Finally, every box in a Young diagram comes with numbers denoted by:
$$
a(\square), \ l(\square)
$$
known as the \textbf{arm} and \textbf{leg} lengths, respectively. These numbers count the distance between the given box and the right and top borders of the partition, respectively. For example, the box of weight $t^{-1}$ in Figure 1 has arm length equal to 2 and leg length equal to 1. Moreover, a Young diagram has \textbf{inner and outer corners}: the example in Figure 1 has 4 inner corners (of weights $t^{-3},qt^{-2},q^3t^{-1},q^4$) and 3 outer corners (of weights $qt^{-3},q^3t^{-2},q^4t^{-1}$). \\

\subsection{} In this paper, we will also consider another inner product, given by:
\begin{equation}
\label{eqn:mac}
\langle p_k, p_k \rangle_{q,t} = k \cdot \frac {1 - q^k}{1 - t^k}
\end{equation}
This is known as Macdonald inner product. By the same Gram-Schimdt principle, there is a unique orthogonal basis $\{P_\lambda\}$ of $\Lambda$ such that:
$$
\langle P_\lambda, P_\mu \rangle_{q,t} = 0 
$$
if $\lambda \neq \mu$, and:
$$
P_\lambda = m_\lambda + \sum_{\mu \lhd \lambda} m_\mu c^\mu_\lambda
$$
The symmetric functions $P_\lambda$ are called \textbf{Macdonald polynomials}. We will also consider a certain renormalization of these polynomials:
\begin{equation}
\label{eqn:renormalization}
M_\lambda = \frac {P_\lambda}{\prod_{\square \in \lambda} \left(t^{-l(\square)} - q^{a(\square)+1} \right)}
\end{equation}
Let us also recall the operator $\nabla:\Lambda \longrightarrow \Lambda$ of Bergeron-Garsia which is diagonal in the basis of Macdonald polynomials:
\begin{equation}
\label{eqn:nabla}
\nabla\cdot M_\lambda = M_\lambda \prod_{\square \in \lambda} \chi_\square
\end{equation}
This operator is usually defined to be diagonal in modified Macdonald polynomials, so there will be an implicit plethysm $X\longrightarrow X (1-t)^{-1}$ connecting our notations with the more common ones in the literature. See \cite{GHT} for an overview. \\

\subsection{} 
\label{sub:ribbon}

Given two partitions, we will write $\mu \leq \lambda$ if the Young diagram of $\mu$ is completely contained in that of $\lambda$. This is equivalent with requiring that $\mu_i \leq \lambda_i$ for all $i$, and it is different from the dominance ordering \eqref{eqn:dom}. If we are in this situation, we call $\lamu$ a \textbf{skew diagram}, meaning a subset of boxes in the first quadrant obtained by removing a Young diagram from a larger one. If $\lambda\backslash \mu$ is connected and contains no $2\times 2$ square, then we call it a \textbf{ribbon}. The quantity:
$$
\high(B) = \max_{\square,\blacksquare \in B}  l(\square)-l(\blacksquare) 
$$
is called the \textbf{height} of a ribbon $B$. We will use the term $n-$ribbon if $|\lamu| = n$. The boxes of an $n-$ribbon are indexed $\square_1,...,\square_n$ going from northwest to southeast, and note that their contents are consecutive integers. Given any two disjoint $n-$ribbons, we say that one is next to the other if their first common edge is vertical, as in the following picture:

\begin{picture}(100,95)(-10,5)
\label{fig5}

\put(85,80){\line(0,1){20}}
\put(85,100){\line(1,0){40}}
\put(125,100){\line(0,-1){60}}
\put(125,40){\line(1,0){40}}
\put(165,40){\line(0,-1){20}}
\put(165,20){\line(-1,0){60}}
\put(105,20){\line(0,1){60}}
\put(105,80){\line(-1,0){20}}

\put(125,60){\line(1,0){60}}
\put(185,60){\line(0,-1){40}}
\put(185,20){\line(1,0){40}}
\put(225,20){\line(0,-1){20}}
\put(225,00){\line(-1,0){60}}
\put(165,0){\line(0,1){20}}

\linethickness{1mm}
\put(125,60){\line(0,-1){20}}

\put(250,50){\text{Figure 2}}

\end{picture}

\text{} \\
A \textbf{vertical} $k-$strip of $n-$ribbons $\{B_1,...,B_k\}$ is a collection of disjoint $n-$ribbons such that no two are next to each other. The height of such a $k-$strip is:
\begin{equation}
\label{eqn:heightstrip}
\high := \sum_{i=1}^k \high B_i
\end{equation}

%The following Lemma is an easy combinatorial exercise: \\

\begin{lemma}
\label{lem:strip}

Any skew Young diagram $\lamu$ of size $kn$ can be covered by at most one vertical $k-$strip of $n-$ribbons. Hence the matrix coefficients of \eqref{eqn:pieri} are either zeroes or monomials $\pm q^xt^y$. \\

\end{lemma}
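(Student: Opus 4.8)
The plan is to prove the statement by induction on $k$, using the elementary fact that a skew diagram can be covered by an $n$-ribbon tiling in at most one way up to the ordering of the ribbons, provided the tiling exists — this is the classical theorem that the $n$-core and $n$-quotient determine a partition. More precisely, I would first recall (or re-derive) the bijection between $n$-ribbon tilings of $\lamu$ and standard fillings of the associated $n$-quotient $(\lambda/\mu)^{(0)},\dots,(\lambda/\mu)^{(n-1)}$; the key point is that whether $\lamu$ admits \emph{any} $n$-ribbon tiling depends only on $\lambda$ and $\mu$ (equality of $n$-cores), and when it does, the \emph{multiset} of ribbons (their shapes and positions, hence their weights $\chi_j(B_i)$) is uniquely determined. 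This already shows the product in \eqref{eqn:pieri} is well-defined independently of how one lists the ribbons.

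The remaining content is the \textbf{vertical-strip} condition: among all $n$-ribbon tilings of a fixed $\lamu$ of size $kn$, I must show that if one tiling has the ``no two ribbons next to each other'' property (no shared vertical edge of the type in Figure 2), then it is the \emph{unique} such tiling. I would argue this directly on Young diagrams: take a vertical $k$-strip tiling and consider the ribbon $B$ whose northwest-most box lies on the topmost nonempty row of $\lamu$. A short case analysis on the local shape near the northeast corner of $\lamu$ shows that the cells of $B$ are forced — the ``not next to each other'' constraint rules out the alternative ribbon choices that an ordinary ribbon tiling would permit — and then one removes $B$ and inducts on $k$. Alternatively, and perhaps more cleanly, I would phrase the vertical-strip condition in terms of the $n$-quotient: being a vertical $k$-strip of $n$-ribbons should correspond to each skew diagram $(\lambda/\mu)^{(i)}$ in the quotient being a \emph{vertical strip} in the usual sense (at most one box per row), plus possibly an interleaving condition across the components; since a vertical strip of a given size between two fixed partitions is unique when it exists, uniqueness of the whole tiling follows.

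Once uniqueness is established, the second sentence is immediate: each matrix coefficient in \eqref{eqn:pieri} is a sum over vertical $k$-strips covering $\lamu$, which by the above has at most one term, and that term is $(-1)^{\mathrm{ht}}$ times a product of the monomials $\chi_j(B_i)^{\lfloor mj/n\rfloor-\lfloor m(j-1)/n\rfloor}$, hence $\pm q^x t^y$ for integers $x,y$ (note $\chi_j(B)=q^xt^{-y}$ is a monomial and the exponents $\lfloor mj/n\rfloor-\lfloor m(j-1)/n\rfloor$ are integers, so the product is a single signed monomial).

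I expect the main obstacle to be the vertical-strip uniqueness step. Ordinary $n$-ribbon tilings of a skew shape are \emph{not} unique, so the whole statement hinges on showing the extra ``no two adjacent'' condition pins the tiling down; making the local case analysis near a corner both correct and exhaustive — or, equivalently, correctly matching the vertical-strip condition to a clean condition on the $n$-quotient — is the delicate part. The rest (well-definedness of the weight product, and the "monomial or zero" conclusion) is then routine bookkeeping.
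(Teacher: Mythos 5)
Your direct argument (the second paragraph) is essentially the paper's own proof: induct on $k$, observe that in a vertical strip the ribbon containing the northwest-most box of $\lamu$ is forced by the no-adjacency condition to start there and trace the external boundary of $\lamu$ (the paper calls this the outer ribbon and justifies it in one sentence), then remove it and repeat; the ``monomial or zero'' conclusion is handled the same way. So up to the case analysis you explicitly defer, you have the intended proof.

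Two points in your write-up need correction, however. First, the claim in your opening paragraph that for an arbitrary $n$-ribbon tiling of $\lamu$ the multiset of ribbons (shapes, positions, hence weights) is uniquely determined is false: the $2\times 2$ square is tiled either by two horizontal or by two vertical dominoes, with different ribbon positions and weights. You contradict this yourself in your last paragraph, and it is not needed anyway, since the weight product attached to one given tiling trivially does not depend on how its ribbons are listed; what core/quotient theory determines is the existence of a tiling and the bijection between ribbon tableaux and fillings of the quotient, not the set of ribbons. Second, in your alternative route the inference ``since a vertical strip of a given size between two fixed partitions is unique when it exists, uniqueness of the whole tiling follows'' is a non sequitur: the quotient shapes $(\lambda/\mu)^{(i)}$ are already determined by $\lambda$ and $\mu$, so knowing they are vertical strips says nothing by itself about how many tilings of $\lamu$ induce them. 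You would still need the (true, but nontrivial) statement that a skew shape whose quotient components are vertical strips carries a unique vertical ribbon-strip tiling, which is essentially the lemma itself and would have to be proved, e.g., via the edge-sequence/abacus formalism. Absent that, the direct outer-ribbon induction is the way to go, and it is exactly the paper's argument.
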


\begin{proof} We will prove the statement by induction on $k$, where the case $k=1$ is obvious. Let us assume a certain skew diagram $\lamu$ can be covered by a vertical $k-$strip of $n-$ribbons, and show that the covering is unique. Note that there is a unique candidate for the $n-$ribbon $B_{\out}$ which contains the northwest-most square of $\lamu$: indeed, this ribbon must start from this square and trace the external boundary of $\lamu$. We call $B_{\out}$ the \textbf{outer ribbon}, and note that if it fails to end on a right vertical boundary of $\lamu$, then we violate the condition that the skew diagram can be covered by a vertical strip of $n-$ribbons. Therefore, removing $B_{\out}$ leaves us with yet another skew Young diagram which can be covered by a $k-1$ strip of $n-$ribbons, so we can repeat the argument. Since at each step, the outer ribbon that we remove is unique, we conclude that the initial covering is unique.

\end{proof}

\subsection{} 
\label{sub:game}

Let us consider a certain \textbf{game}. Start with any skew diagram $\lamu$ and \textbf{bubble} down its rows according to the following procedure: \\

\begin{enumerate}

\item start with the topmost row (call its length $l$), and slide it diagonally in the southwest direction on top of the second row (call its length $l'$) \\

\item on the second row, we will now have two overlapping horizontal strips of boxes, of lengths:
$$
\max(l,l')+a \qquad \text{and} \qquad \min(l,l')-a \qquad \qquad \text{for some } a>0
$$

\item take the longest of the two strips and slide it diagonally on top of the next row down, and repeat the procedure \\

\item when we obtain a row of length $n$, we remove it and go back to step $(1)$

\end{enumerate}

\textbf{} \\ 
If we can remove all the boxes of $\lamu$ by applying the above sequence of moves, without ever obtaining a horizontal strip of more than $n$ boxes, we call the skew diagram $\lamu$ a \textbf{winner}. \\

\begin{lemma}
\label{lem:game}

A skew Young diagram is a winner if and only if it can be covered by a vertical strip of $n-$ribbons. \\

\end{lemma}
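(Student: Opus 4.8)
The plan is to prove both implications simultaneously by induction on $k$, where $|\lamu| = kn$, mirroring and using the proof of Lemma~\ref{lem:strip}. The case $k = 0$ is vacuous, and $k = 1$ is an immediate check: the game performs a single round which diagonally straightens the unique candidate ribbon from the proof of Lemma~\ref{lem:strip} into a horizontal strip, and it finishes on a row of length exactly $n$ with no intermediate strip longer than $n$ precisely when that candidate is a genuine $n$-ribbon ending on a right vertical boundary of $\lamu$ — which is exactly the condition for $\lamu$ to be a vertical $1$-strip of $n$-ribbons.

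The heart of the inductive step is an analysis of one round of the game, i.e. of the moves performed from an application of step $(1)$ up to the next return to it. The key observation is that a diagonal southwest slide preserves the content $o_\square = x - y$ of every box, so every horizontal strip produced during the game consists of boxes with consecutive contents, and the rule ``take the longest of the two overlapping strips'' simply keeps tracing the outer boundary of $\lamu$ starting at its northwest-most box. I would establish the following claim: the boxes consolidated into the first removed row of length $n$ are, after undoing all slides, exactly the candidate outer ribbon $B_{\out}$ from the proof of Lemma~\ref{lem:strip}; the first round completes without ever producing a horizontal strip of more than $n$ boxes if and only if $B_{\out}$ is a legitimate $n$-ribbon ending on a right vertical boundary of $\lamu$; and in that case the configuration left after the round reassembles into the skew diagram $(\lamu) \setminus B_{\out}$, on which the game then continues. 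The mechanism behind the overflow condition is precisely the obstruction used in the proof of Lemma~\ref{lem:strip}: producing a horizontal strip of more than $n$ boxes is forced exactly when, after $n$ boxes, the boundary path being straightened has not yet reached a right vertical edge of $\lamu$, i.e. exactly when $B_{\out}$ fails to end on a right vertical boundary of $\lamu$ as in Lemma~\ref{lem:strip}.

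Granting the claim, the induction closes. If $\lamu$ is a winner, its first round succeeds, so $B_{\out}$ is a valid $n$-ribbon ending on a right vertical boundary of $\lamu$ and $(\lamu) \setminus B_{\out}$ is again a winning skew diagram; by the inductive hypothesis it is covered by a vertical $(k-1)$-strip of $n$-ribbons, and since nothing lies to the right of the final box of $B_{\out}$, this ribbon is next to none of them, so adjoining it gives a vertical $k$-strip covering $\lamu$. Conversely, if $\lamu$ is covered by a vertical $k$-strip, then by the uniqueness in Lemma~\ref{lem:strip} one of its ribbons is $B_{\out}$, hence $B_{\out}$ is a valid $n$-ribbon ending on a right vertical boundary of $\lamu$ and the remaining $k-1$ ribbons form a vertical $(k-1)$-strip covering $(\lamu) \setminus B_{\out}$; by the claim the first round runs with no overflow, and by the inductive hypothesis the continued game on $(\lamu) \setminus B_{\out}$ is won, so $\lamu$ is a winner.

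The step I expect to be the main obstacle is the claim itself — the dictionary between the global dynamics of the game, which processes all rows at once and deposits residual strips at intermediate levels, and the one-ribbon-at-a-time boundary combinatorics underlying Lemma~\ref{lem:strip}. One must verify carefully that the residual strips together with the rows not yet touched always reassemble into a bona fide skew diagram equal to $(\lamu) \setminus B_{\out}$, that the ``longest strip'' rule at each level tracks exactly the portion of the outer boundary of $\lamu$ traversed so far, and that an overflow beyond $n$ boxes occurs exactly when $B_{\out}$ fails to close at size $n$. None of this is deep, but packaging it into a clean statement, and choosing coordinates so that the slides, the contents, and the ``next to'' relation all bookkeep consistently, is where the real work lies; once it is in place, the induction above is routine.
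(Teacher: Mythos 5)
Your strategy is the same as the paper's: identify one round of the bubbling game with the straightening of the outer ribbon $B_{\out}$ from Lemma \ref{lem:strip}, remove it, and induct on $k$; the paper's written proof carries out exactly this dictionary, though only for the direction (coverable $\Rightarrow$ winner). Your central claim is true, and its verification is indeed routine once set up correctly: if row $i$ from the top occupies columns $[a_i,b_i]$, then after the moving strip has absorbed rows $1,\dots,j$ it occupies the column interval $[a_1-(j-1),\,b_j]$, so its length $b_j-a_1+j$ equals the number of boxes of the rim path through rows $1,\dots,j$ and is strictly increasing in $j$; moreover the residue deposited in row $j$ is the interval $[a_j,\,b_{j-1}-1]$, i.e. row $j$ minus its rim portion. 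Hence a round succeeds exactly when the rim path closes at length $n$ at the right end of a row, and the remaining configuration is literally $\lamu\setminus B_{\out}$, so the game restarts on that skew diagram. Spelling this out would complete the part you defer.

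The genuine gap is in your converse step, at the sentence ``since nothing lies to the right of the final box of $B_{\out}$, this ribbon is next to none of them.'' The outer ribbon \emph{does} share vertical edges with $\lamu\setminus B_{\out}$: in every row that $B_{\out}$ descends into, the box immediately to the left of the first box of $B_{\out}$ in that row (when it lies in $\lamu$) meets $B_{\out}$ along a vertical edge. So ``not next to'' does not follow merely from $B_{\out}$ hugging the outer boundary; you must check that for each ribbon $B'$ of the vertical $(k-1)$-strip covering $\lamu\setminus B_{\out}$, the \emph{first} common edge with $B_{\out}$ is horizontal. This is true but requires an argument: if the box directly above such a contact box belongs to $B_{\out}$, it supplies an earlier horizontal common edge; if it belongs to the complement, one must show (using the structure of the unique covering of $\lamu\setminus B_{\out}$, e.g. its own outer ribbon) that it lies in the same ribbon $B'$, pushing the first contact northwest until a horizontal edge occurs. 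Without this verification the implication ``winner $\Rightarrow$ coverable'' --- precisely the direction the paper's written proof leaves implicit, and the one actually invoked in the proof of Proposition \ref{prop:main} --- is not closed.
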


\begin{proof} Assume $D$ can be covered by a vertical strip of $n-$ribbons. Let $B_{\out}$ be the outer ribbon of this covering (see the proof of Lemma \ref{lem:strip}) and suppose it has height $h$. Then as we bubble down the first row according to the above procedure, after $h-1$ steps the blocks of the original ribbon $B_{\out}$ will form a horizontal strip of length $n$. We remove this strip, and then repeat the game for the remaining skew diagram, which can be covered by a vertical strip of one less $n-$ribbons.

\end{proof}

\section{The elliptic Hall and shuffle algebras}
\label{sec:ell}

\subsection{} 
\label{sub:elliptic} 

In this section, we will present a certain algebra $\CA$ known in geometric representation theory as the elliptic Hall algebra. The following is an renormalized formulation of the presentation in \cite{BS}, where this algebra was first defined. Set:
$$
\alpha_k = (1-t^{k})(1-q^{k}t^{-k}) \in \BK
$$
Let $\BN \times \BZ$ denote the right half plane lattice, and let $\BZ^\geq = \BN \times \BZ \cup \{0\times \BN\}$ denote the right half plane lattice with the positive vertical half-line included. Define the algebra $\CA^\geq$ to be generated by elements $p_v$, for all $v\in \BZ^\geq$, modulo the following relations:
\begin{equation}
\label{eqn:rel1}
[p_{kv}, p_{lv}] = 0 
\end{equation}
for any $k,l>0$ and any $v\in \BZ^\geq$, while:
\begin{equation}
\label{eqn:rel2}
[p_{v}, p_{v'}] = \theta_{v+v'}
\end{equation}
for any clockwise oriented lattice triangle $\{0, v, v + v'\} \subset \BZ^\geq$ with no lattice points inside and on the first two edges, where we define:
$$
\theta_v(z) = \sum_{k\geq 0} \theta_{kv} z^k := \frac {1 - q^{-1}}{(q - t) (1 - t^{-1})} \cdot \exp \left( \sum_{k\geq 1} \alpha_k  p_{kv} \frac {z^k}k \right)
$$
for any $v = (n,m) \in \BZ^\geq$ such that $\gcd(m,n)=1$. If the $p_v$ are thought of as power sum functions, the $\theta_v$ are plethystically modified complete symmetric functions. We will write $\CA \subset \CA^\geq$ for the subalgebra generated by $p_v$ with $v\in \BN \times \BZ \subset \BZ^\geq$. \\

\subsection{} 
\label{sub:fock}

Note that the condition \eqref{eqn:rel1} ensures that, for all coprime $(n,m) \in \BZ^\geq$:
$$
\kk[p_1^{m/n},p_2^{m/n},...] =: \CA_{m/n} \hookrightarrow \CA^\geq
$$
are all comutative subalgebras, where we denote $p_k^{m/n} = p_{kn,km}$. Our basic module for $\CA^\geq$ will be the ring of symmetric functions:
$$
\Lambda := \kk[x_1,x_2,...]^{\sym}
$$
where the algebra $\CA^\geq$ acts by:
\begin{equation}
\label{eqn:action}
p_k^0 \ = \ \text{multiplication by }p_k
\end{equation}
$$ 
p_k^r = \nabla^r p_{k}^0 \nabla^{-r}, \ \quad \forall \ r\in \BZ
$$
\begin{equation}
\label{eqn:action2}
p_k^{\infty} ( M_\lambda ) = M_\lambda \sum_{i\geq 0} q^{\lambda_i-1} t^{-i}
\end{equation}
Because of the defining relations \eqref{eqn:rel1} - \eqref{eqn:rel2}, this is enough to define the action of the whole algebra $\CA^\geq$, although one needs to check that the defining relations are met (see \cite{Nsurv} for a survey explaining the proof of this result). Hence the operators: 
$$
p_k^{m/n} = p_{kn,km}: \Lambda \longrightarrow \Lambda
$$ 
thus defined \textbf{interpolate} between the operators \eqref{eqn:action} of multiplication by $p_k$ and the Macdonald $q-$difference operators \eqref{eqn:action2}. To obtain an understanding of how these operators explicitly act on symmetric functions for general $m$ and $n$, we turn to an incarnation of $\CA$ known as the \textbf{shuffle algebra}. \\

\subsection{} Consider an infinite set of variables $z_1,z_2,...$, and take the $\kk-$vector space:
\begin{equation}
\label{eqn:big}
V = \bigoplus_{N \geq 0} \kk(z_{1},...,z_{N})^{\sym}
\end{equation}
We can endow it with a $\kk-$algebra structure by the so-called \textbf{shuffle product}:
$$
R_1(z_{1},...,z_{N}) * R_2(z_{1},...,z_{N'}) =
$$

\begin{equation}
\label{eqn:mult}
= \textrm{Sym} \left[R_1(z_{1},...,z_{N}) R_2(z_{N+1},...,z_{N+N'}) \prod_{i=1}^N \prod_{j = N+1}^{N+N'} \omega \left( \frac {z_i}{z_j} \right) \right]
\end{equation}
where:
\begin{equation}
\label{eqn:defomega}
\omega(x)  = \frac {(1 - x q)(t - x)}{(1 - x)(t - xq)}  
\end{equation}
and \textrm{Sym} denotes the symmetrization operator:
$$
\textrm{Sym}\left( R(z_1,...,z_N) \right) = \sum_{\sigma \in S(N)} R(z_{\sigma(1)},...,z_{\sigma(N)})
$$
Note that the product preserves the two gradings on the vector space $V$: the number of variables $N$ and the total homogeneous degree $M$ of rational functions. \\

\subsection{} The \textbf{shuffle algebra} is defined as the subalgebra $\CS \subset V$ consisting of rational functions of the form:
\begin{equation}
\label{eqn:shufelem}
R(z_1,...,z_N) = \frac {r(z_1,...,z_N)}{\prod_{1\leq i \neq j \leq N} (t z_i-q z_j)}
\end{equation}
where $r$ is a symmetric Laurent polynomial that satisfies the following \textbf{wheel conditions} (introduced in \cite{FHHSY}):
\begin{equation}
\label{eqn:wheel}
r(z_1,...,z_N) = 0 \qquad \text{whenever } \left\{ \frac {z_1}{z_2},\frac {z_2}{z_3},\frac {z_3}{z_1} \right\} = \left\{q,\frac 1t,\frac t{q} \right\} \text{ or } \left\{t, \frac 1{q}, \frac qt \right\} 
\end{equation}
These conditions impose quite significant restrictions on the set of elements of $\CS$, as was studied in \cite{FHHSY} and \cite{Shuf}. In particular, they ensure the following Proposition: \\

\begin{proposition}
\label{prop:well}

Consider any shuffle element $R \in \CS$ and any skew Young diagram $\lamu$. Then the following quantity is well-defined:
\begin{equation}
\label{eqn:well}
R(\lamu) := R(\chi_\square)_{\square \in \lamu} \in \BK 
\end{equation}
as long as the number of variables of $R$ equals the number of boxes of $\lamu$. \\

\end{proposition}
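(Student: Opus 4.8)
The plan is to compute $R(\lamu)$ by specializing the variables of $R$ to the box weights $\chi_\square$ \emph{one box at a time}, in a carefully chosen order, and to check that each successive specialization is a removable singularity; since $R$ is symmetric we may simply label its $N=|\lamu|$ variables by the boxes of $\lamu$. Write $R=r/\prod_{i\neq j}(tz_i-qz_j)$ as in \eqref{eqn:shufelem}. The starting observation is that, by \eqref{eqn:weight}, the factor $tz_i-qz_j$ takes the value $0$ at box weights exactly when $\square_i=\square_j+(1,1)$, i.e. when $\square_j$ is the box immediately southwest of $\square_i$ on the same diagonal. Consequently, when the single variable $z_\square$ is moved towards $\chi_\square$ (all others kept at their current values), only two denominator factors can degenerate: the one involving the southwest diagonal neighbour of $\square$ and the one involving its northeast diagonal neighbour.

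Next I would fix the order of specialization: process the diagonals of $\lamu$ in increasing order of content \eqref{eqn:content} (recall that the content is constant along each diagonal), and within each diagonal run from the southwest-most box to the northeast-most box. With this order, at the moment we set $z_\square=\chi_\square$ the northeast diagonal neighbour of $\square$ has not yet been specialized, so the factor it contributes is still a genuine nonzero polynomial; while the southwest diagonal neighbour $\square'=\square-(1,1)$, if it belongs to $\lamu$, has already been specialized. Thus $R$, viewed as a function of $z_\square$ alone, has at $z_\square=\chi_\square$ a pole of order \emph{at most one}, contributed solely by the factor $tz_\square-q\chi_{\square'}$.

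It remains to cancel this pole with a zero of the numerator $r$, and here the wheel conditions \eqref{eqn:wheel} are decisive. Unwinding \eqref{eqn:wheel}, one sees that $r$ vanishes whenever three of its variables are assigned the weights of an ``$L$-shaped'' triple of boxes $\{(x,y),\,(x,y+1),\,(x+1,y+1)\}$ — each of the two wheels in \eqref{eqn:wheel} describes exactly this configuration, read in two different cyclic orders. Now comes the key combinatorial point: whenever $\square=(a,b)$ and $\square'=(a-1,b-1)$ both lie in $\lamu$, so does the box $\square''=(a-1,b)$ — this follows at once from $\square'\notin\mu$ together with $\mu_b\le\mu_{b-1}$ — and $\{\square',\square'',\square\}$ is precisely such an $L$-triple. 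Since $\square''$ lies on the diagonal whose content is one less than that of $\square$, it has already been specialized (as has $\square'$) by the time we reach $\square$; hence, as $z_\square\to\chi_\square$, the three variables attached to $\square,\square',\square''$ run over the weights of an $L$-triple, forcing $r$ to vanish. This (at least simple) zero cancels the (at most simple) pole; the limit exists and is again a rational function of the remaining variables. Iterating over all $N$ boxes produces $R(\lamu)\in\BK$.

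The step I expect to require the most care is the coordination just described: the specialization order must be chosen so that \emph{both} companion boxes $\square'$ and $\square''$ needed to invoke a wheel condition are already specialized once $\square$ is reached, while at the same instant no second denominator factor is permitted to degenerate (which would force one to control a higher-order zero of $r$). The diagonal order above accomplishes this precisely because the only ``dangerous'' neighbours of a box lie on its own diagonal or on the diagonal of content one less, and because — crucially — a skew diagram cannot contain the two diagonal corners of an $L$-triple without also containing its middle box.
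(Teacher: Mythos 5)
Your identification of the dangerous denominator factors (they vanish exactly for pairs of diagonally adjacent boxes), of the wheel zeros coming from L-shaped triples, and of the fact that a skew shape containing both ends of such a pair must contain the intermediate box are all correct, and these are the same ingredients the paper uses. The gap is in the iteration. At every step you bound the pole and produce the cancelling zero using the original presentation $R=r/\prod_{i\neq j}(tz_i-qz_j)$ of \eqref{eqn:shufelem} with the previously fixed variables plugged in. This is legitimate only until the first diagonally adjacent pair has been completely specialized: from that moment on the specialized denominator is identically zero in the remaining variables, and so is the specialized $r$ (precisely because a full wheel triple is then among the fixed boxes), so the fraction you are inspecting is $0/0$. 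The function you must continue with is the residual quotient obtained after cancelling $(z_\square-\chi_\square)$ at the earlier step; its numerator is no longer $r$, and the wheel conditions \eqref{eqn:wheel} say nothing about it directly. Already for $\lamu=(3,3)$, at the box $(2,1)$ both the numerator and the denominator you compare are identically zero. One can try to repair this by arguing that the divided-out numerator still vanishes on the later wheel locus (this works when that locus is not contained in the hyperplane being divided out), but this repair itself breaks down as soon as three boxes lie consecutively on one diagonal, e.g. for $\lamu=(3,3,3)$: the wheel locus needed at $(2,2)$, after the earlier boxes are fixed, is contained in the hyperplane $z_{(1,1)}=\chi_{(1,1)}$ that was divided out when $(1,1)$ was processed, so one would need second-order vanishing of $r$ there — something the stated wheel conditions do not give and which you do not prove.

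This is exactly the difficulty the paper's proof is organized to avoid: it specializes whole rows at once along one-parameter families, setting the variables of the $i$-th row equal to $a_iq^{x_i},\dots,a_iq^{x_i'}$. On that subvariety no denominator factor vanishes identically, each dangerous factor becomes a linear form in the $a_i$, and the wheel conditions give codimension-one vanishing of the restricted numerator along each of these pairwise non-proportional forms; hence the numerator is divisible by their product, all cancellations are performed simultaneously as an identity of rational functions in $a_1,\dots,a_l$, and only then is the evaluation $a_i=t^{-y_i}$ made, the surviving denominator being nonzero by the elementary skew-shape observation you also noted. If you want to keep a box-by-box scheme you must carry the residual numerators along and establish the required extra (iterated, possibly higher-order) vanishing of $r$ at each stage; you should also say in what sense the result is independent of the chosen order, since, unless $R$ is actually a Laurent polynomial, it is genuinely singular at the point $(\chi_\square)_{\square\in\lamu}$ whenever $\lamu$ contains two diagonally adjacent boxes (a symmetric $r$ divisible by one factor $tz_i-qz_j$ would be divisible by all of them), so the value of an iterated specialization must still be matched with the row-wise evaluation that feeds into \eqref{eqn:shuf}.
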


\subsection{} The connection between the shuffle and elliptic Hall algebras is given by: \\

\begin{theorem} (see \cite{SV}, \cite{Shuf}) \label{thm:shufhall}  We have an isomorphism of algebras:
$$
\CA \longrightarrow \CS, \qquad \qquad p_{1,m} \longrightarrow z_1^m
$$
\end{theorem}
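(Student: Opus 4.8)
The statement to prove is the isomorphism $\CA\cong\CS$ sending $p_{1,m}\mapsto z_1^m$ (the combined content of \cite{SV} and \cite{Shuf}). The plan is to build the map directly on generators, check it respects the defining relations of $\CA$, and then prove bijectivity by a character count in one direction and a reduction argument in the other. The first input is the standard fact that $\CA$ is generated as a $\kk$-algebra by the height-one elements $\{p_{1,m}\}_{m\in\BZ}$: applying \eqref{eqn:rel2} to the pairs $v=(1,m+1),\,v'=(1,m-1)$ and $v=(1,m+1),\,v'=(1,m)$ expresses $p_{2,2m}$ and $p_{2,2m+1}$ --- modulo lower-height corrections that already lie in the subalgebra generated by the $p_{1,*}$ --- as commutators of height-one generators, and an induction on the first coordinate using \eqref{eqn:rel2} for triangles $\{0,(1,a),(n,a+b)\}$ then produces every $p_{n,m}$. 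It therefore suffices to set $\Phi(p_{1,m})=z_1^m$, which is legitimate because in the one-variable component of $V$ both the pole condition \eqref{eqn:shufelem} and the wheel conditions \eqref{eqn:wheel} are vacuous, so $z_1^m\in\CS$.

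Next comes well-definedness: the shuffle products of the $z_1^m$ must satisfy \eqref{eqn:rel1}--\eqref{eqn:rel2}. Commutativity along a ray is immediate for $v=(1,m)$ (a shuffle product of one-variable elements is symmetric in the exponents) and follows for general $v$ from the formulas above. The substance is \eqref{eqn:rel2}: one computes the shuffle bracket of $z_1^{m_1}$ and $z_1^{m_2}$ in two variables using the kernel $\omega$ of \eqref{eqn:defomega}, checks that the spurious pole along $z_1=z_2$ cancels after symmetrization so the denominator is exactly $(tz_1-qz_2)(tz_2-qz_1)$, and matches the outcome against the degree-$(v+v')$ piece of the generating series $\theta_{v+v'}$ whenever the lattice-triangle hypothesis of \eqref{eqn:rel2} holds. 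The identity ``$\theta$ is the exponential of the $p$'s'' is exactly what forces the normalizing constants $\alpha_k$ and the prefactor $\frac{1-q^{-1}}{(q-t)(1-t^{-1})}$. Granting it, $\Phi$ is an algebra homomorphism whose image lies in $\CS$: any iterated shuffle product of one-variable elements has denominator a product of factors $(tz_i-qz_j)$, and on the locus of \eqref{eqn:wheel} one of the factors $\omega(z_a/z_b)$ hits a zero of $\omega$ (namely $\omega(q^{-1})=0$ or $\omega(t)=0$), so the numerator vanishes.

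For injectivity I would use a dimension count: the elliptic Hall algebra has a PBW-type basis indexed by finite multisets of lattice points decorated with convexity data, hence a fixed graded dimension for the $\BN\times\BZ$-grading, and it suffices to see that $\Phi$ does not collapse this basis --- which follows by evaluating images on a separating family of box-weight specializations (Proposition \ref{prop:well}) or by comparing leading $z$-monomials under a dominance order. Surjectivity is the main obstacle, and is the technical heart of \cite{FHHSY} and \cite{Shuf}: one induces on the number of variables $N$ and, for fixed $N$, on degree. Given $R\in\CS$ in $N$ variables, the wheel conditions force enough of the specializations $z_{i+1}=qz_i$, $z_{i+1}=tz_i$, etc. to vanish that the restriction of $R$ to a hyperplane $tz_N=qz_{N-1}$ factors as an $(N-1)$-variable element of $\CS$ times an explicit rational function; subtracting a suitable shuffle product of that element with a single $z_1^m$ cancels this leading behaviour while staying in $\CS$, and the complexity strictly drops, closing the induction. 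Checking that the pole order along each $(tz_i-qz_j)$ is exactly one and that the wheel conditions account precisely for the residual freedom after removing one variable is what makes the wheel-condition definition of $\CS$ agree with ``the subalgebra generated by the $z_1^m$'', hence with $\CA$; together with injectivity this yields the isomorphism, which sends $p_{1,m}\mapsto z_1^m$ by construction.
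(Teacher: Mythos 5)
You are trying to prove a statement that the paper itself does not prove but imports from \cite{SV} and \cite{Shuf}, so the only question is whether your outline would actually work; as written it has two genuine gaps. First, well-definedness: you set $\Phi(p_{1,m})=z_1^m$ and verify only the two-variable bracket of $z_1^{m_1}$ with $z_1^{m_2}$, but the Burban--Schiffmann presentation of $\CA$ has generators $p_v$ for \emph{all} $v$ and relations \eqref{eqn:rel1}--\eqref{eqn:rel2} for all rays and all admissible triangles. To get a homomorphism out of $\CA$ you must either exhibit a presentation of $\CA$ purely in terms of the $p_{1,m}$ (not what \cite{BS} gives, and a nontrivial theorem in itself) or verify every relation for the proposed images; saying that $[p_{kv},p_{lv}]=0$ ``follows for general $v$ from the formulas above'' is circular, since commutativity of the images of $p_{kv},p_{lv}$ along a ray of slope $m/n$ is precisely the existence of the commutative subalgebras $\CS_{m/n}$ (the hard result of \cite{FHHSY}), not a formal consequence of the degree-one computation. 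Even your membership check is too quick: on a wheel locus some summands of the symmetrization contain no vanishing factor $\omega$; they are killed instead because the prescribed denominator $\prod(tz_i-qz_j)$ vanishes there, and the summands carrying the compensating pole are exactly those forced (by the cyclic inconsistency of the orderings) to contain a zero of $\omega$ --- this is the actual argument, and it needs to be made.

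Second, surjectivity: your induction ``restrict to $tz_N=qz_{N-1}$, factor off an $(N-1)$-variable element, subtract a shuffle product with some $z_1^m$, and the complexity strictly drops'' has no decreasing invariant. An element of $\CS$ in $N$ variables whose restriction to that hyperplane vanishes is not in fewer variables, the space of such elements in each bidegree is not obviously exhausted by products from fewer variables, and the wheel conditions alone do not let you peel off a variable. This is not how the cited sources argue: \cite{SV} produce the embedding $\CA\hookrightarrow\CS$ (via the Hall-algebra/coproduct realization, which also handles your injectivity step, for which ``evaluate on box-weight specializations'' presupposes faithfulness-type results you have not established), and \cite{Shuf} proves that the image is all of the wheel-condition algebra by the slope filtration --- the limits \eqref{eqn:copshuf} bound the graded dimensions of $\CS$ above by counts matched exactly by the Burban--Schiffmann PBW basis of $\CA$. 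Without that dimension comparison, or some genuine replacement for it, your induction does not close, and this is the technical heart of the theorem rather than a detail.
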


\textbf{} \\
As a consequence of Theorem \ref{thm:shufhall}, $\CS$ is a model for the elliptic Hall algebra, whose elements are certain rational functions. One of the reasons why this is relevant is that these rational functions are precisely the kernels that describe the action of $\CA$ in the representation $\Lambda$ of Subsection \ref{sub:fock}. Explicitly, it was shown in \cite{N} that an element $R(z_1,...,z_N) \in \CS\cong \CA$ acts in the basis of modified Macdonald polynomials by the formula:
\begin{equation}
\label{eqn:shuf}
R \cdot M_\mu = \sum_{\mu \leq \lambda} M_\lambda \cdot  R(\lambda \backslash \mu) \prod_{\blacksquare \in \lambda \backslash \mu} \left[ \left(t - q \chi_\blacksquare \right) \prod_{\square \in \mu} \omega \left(\frac {\chi_{\blacksquare}}{\chi_{\square}} \right) \right] \quad \qquad
\end{equation}
where the sum goes over all skew diagrams $\lambda \backslash \mu$ of size $N$. In particular, setting $N=1$ and $R(z_1) = 1$ gives us the first $q,t-$Pieri rule for Macdonald polynomials. \\

\subsection{} 
\label{sub:p}

So to find out how the generator $p_k^{m/n}\in \CA$ acts on $\Lambda$ in the basis of modified Macdonald polynomials $M_\lambda$, we need to find out which element of the shuffle algebra it corresponds to, and then evaluate that shuffle element at the set of weights of various skew diagrams in order to use \eqref{eqn:shuf}. It was shown in \cite{Shuf} that, for any $\frac mn \in \BQ$, the isomorphism of Theorem \ref{thm:shufhall} sends:
\begin{equation}
\label{eqn:wally}
\CA \ni p^{m/n}_k \longrightarrow P_k^{m/n} \in \CS
\end{equation}
where the rational function $P_k^{m/n} (z_1,...,z_N)$ with $N=kn$ is given by $\quad P_{k}^{m/n} = $
$$
= \left[\frac {(1-t)(1-q)}{t-q}\right]^N \sym \left[\frac {\prod_{i=1}^{N} z_i^{r_{\frac mn}(i)} \sum_{i=0}^{k-1}  \frac {t^{i}z_{n}z_{2n}...z_{in}}{q^{i} z_{n+1}z_{2n+1}...z_{in+1}} }{(1-t^k)\left(1-\frac {t z_1}{q z_2} \right)...\left(1-\frac {t z_{N-1}}{q z_{N}} \right)}  \prod_{1\leq i < j \leq N} \omega\left ( \frac {z_i}{z_j} \right) \right] 
$$
where: 
$$
r_{\frac mn}(i) = \left \lceil \frac {mi}n \right \rceil - \left \lceil \frac {m(i-1)}n \right \rceil
$$
Plugging this into \eqref{eqn:shuf} gives us explicit ``shuffle formulas" for the generators of the elliptic Hall algebra $\CA$, and the way they act in the basis of Macdonald polynomials. In particular, plugging in $m=0$ and $n=1$ gives us formulas for the operators of multiplication by the power sum function $p_k$ in the basis of Macdonald polynomials. Such formulas may be considered to be $q,t-$Pieri rules for Macdonald polynomials, and the coefficients boil down to sums over standard tableaux. However, instead of working with the above explicit presentation of $P_k^{m/n}$, we will now present an implicit characterization which was developed in \cite{Shuf}. \\

\subsection{} 

For any rational number $\frac mn \in \BQ$, we have the algebra isomorphism: 
$$
\Lambda \stackrel{\cong}\longrightarrow \CA_{m/n} \subset \CA, \qquad \qquad p_k \longrightarrow p_k^{m/n}
$$
We will be interested the codomain of this isomorphism in terms of the shuffle algebra, i.e. passing through the isomorphism of Theorem \ref{thm:shufhall}:
\begin{equation}
\label{eqn:iso}
\Lambda \stackrel{\cong}\longrightarrow \CS_{m/n} \subset \CS, \qquad \qquad p_k \longrightarrow P_k^{m/n}
\end{equation}
This can be upgraded to an isomorphism of bialgebras, with respect to the usual coproduct on symmetric functions:
$$
\Delta: \Lambda \longrightarrow \Lambda \otimes \Lambda, \qquad \qquad  \Delta\left(f(x_1,x_2,...) \right) = f(x_1,x_2,... \otimes x_1',x_2',...) 
$$
In particular, we see that $\Delta(p_k) = p_k \otimes 1 + 1 \otimes p_k$, so the coproduct $\Delta$ is characterized by the property that the power sum functions are primitive. It was shown in \cite{Shuf} that the isomorphism \eqref{eqn:iso} intertwines the above coproduct with:
$$
\Delta_{m/n} : \CS_{m/n} \longrightarrow \CS_{m/n} \otimes \CS_{m/n},
$$
\begin{equation}
\label{eqn:copshuf}
\Delta_{m/n}\left(R(z_1,...,z_N) \right) = \sum_{i=0}^N \lim_{\xi \rightarrow \infty} \frac {R(\xi z_1,..., \xi z_i \otimes z_{i+1},..., z_N)}{\xi^{\frac {mi}n}}
\end{equation}
for any $N = kn$. The above limits are not surprising: it was shown in \cite{Shuf} that the subalgebra $\CS_{m/n} \subset \CS$ that corresponds to the subalgebra $\CA_{m/n} \subset \CA$ under Theorem \ref{thm:shufhall} is precisely characterized by the existence and finiteness of the limits \eqref{eqn:copshuf}, by generalizing a result of \cite{FHHSY}. \\

\subsection{} 
\label{sub:bull}

Since $\Lambda$ is a polynomial ring in infinitely many variables, it has quite a large number of automorphisms. But if we require such automorphisms to also preserve the coproduct, then the only possibility for an automorphism is to independently rescale the power sum functions $p_k$. We will fix this ambiguity by introducing the multiplicative \textbf{norm} map:
$$
\ph : \Lambda \longrightarrow \BK, \qquad \qquad \ph(p_k) = 1
$$
As shown in \cite{Shuf}, the isomorphism \eqref{eqn:iso} makes the above map compatible with:
$$
\ph : \CS_{m/n} \longrightarrow \BK,  \qquad \ph (R) =  R(1,q,...,q^{N-1}) \cdot q^{-\frac {MN-M+N-k}2} \frac {(t-q)...(t-q^N)}{(1-q)...(1-q^N)} 
$$
where we write $M = km$ and $N = kn$. To summarize the above, we have the following implicit characterization, which uniquely determines the symmetric rational function $P_k^{m/n}$: \\

%for all $R$ of bidegree $(N,M)$, with $k = \gcd(M,N)$. In other words, we have 
%$$
%\ph(P_k^{m/n}) = 1
%$$
%which can readily be seen by plugging in $z_i=q^{i-1}$ into the rational function of Subsection \ref{sub:p}. \footnote{This is an easy computation, analogous with Proposition 6.10 of \cite{Shuf}, as all summands of the Sym vanish except for one} 

\begin{itemize}
	
\item it has $kn$ variables and homogenous degree $km$ \\

\item it satisfies the wheel conditions \eqref{eqn:wheel} \\

\item its degree in any number of $i\in (0,kn)$ of its variables is $ < \frac {mi}n$ \\

\item we have the normalization: $\ph\left(P_k^{m/n} \right) = 1$.\\

%$$
%P_k^{m/n}(1,q,...,q^{N-1})= \frac {(1-q)...(1-q^N)}{(t-q)...(t-q^N)} \cdot q^{\frac {MN-M+N-k}2}
%$$

\end{itemize}

\subsection{} 
\label{sub:bullets}

We can generalize the above description to the following, true for all triples $(m,n,k) \in \BZ \times \BN \times \BN$ with $m$ and $n$ coprime: \\

\begin{lemma} 
\label{lem:uniqe}
	
For any $c \in \kk$ and sums \footnote{Our convention for the coproduct is Sweedler notation, which writes $\Delta(R) = R_1 \otimes R_2$ and implies a sum of tensors. Therefore, the complete notation should be $\Delta(R) = \sum_i R_1^{(i)} \otimes R_2^{(i)}$, although we will generally avoid this in order to not overburden notation} of tensors $R_{1,l} \otimes R_{2,l} \in \CS_{m/n} \otimes \CS_{m/n}$ for all $0<l<k$, there is at most one shuffle element $R$ such that: \\

\begin{itemize}
	
\item $R$ has $kn$ variables and homogenous degree $km$ \\
	
\item $R$ satisfies the wheel conditions \eqref{eqn:wheel} \\
	
\item the degree of $R$ in any number of $i\in (0,kn)$ of its variables is $ \leq \frac {mi}n$, with equality only if $i = ln$ for some $0<l<k$, in which case:
$$
\lim_{\xi \rightarrow \infty} \frac {R(\xi z_1,..., \xi z_{ln} \otimes z_{ln+1},..., z_{kn})}{\xi^{ml}} = R_{1,l} (z_1,...,z_{ln}) \otimes R_{2,l}(z_{ln+1},...,z_{kn})
$$
	
\item we have the normalization $\ph(R) = c$. 
	
\end{itemize}

\end{lemma}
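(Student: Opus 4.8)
The plan is to prove the statement by the standard subtraction argument, reducing it to the implicit characterization of $P_k^{m/n}$ recalled just before the Lemma. Suppose $R$ and $R'$ are two shuffle elements, both satisfying the four bullets for the same scalar $c$ and the same tensors $R_{1,l}\otimes R_{2,l}$, and set $D=R-R'$. Since $\CS$ is a $\kk$-linear subspace of $V$ cut out by the shape \eqref{eqn:shufelem} (with fixed denominator in $kn$ variables) together with the wheel conditions \eqref{eqn:wheel}, the difference $D$ is again a shuffle element, with $kn$ variables, homogeneous degree $km$, and satisfying the wheel conditions. It remains to show $D=0$.

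The key step is to upgrade the non-strict degree bounds on $R$ and $R'$ to \emph{strict} bounds on $D$ in every relevant number of variables. Fix $i\in(0,kn)$. If $n\nmid i$, then because $\gcd(m,n)=1$ the number $\frac{mi}{n}$ is not an integer; since the degree of a shuffle element in any prescribed set of variables is an integer, and $\deg_i D\leq\max(\deg_i R,\deg_i R')\leq\frac{mi}{n}$, we conclude $\deg_i D<\frac{mi}{n}$. If $i=ln$ with $0<l<k$, then the third bullet applied to $R$ and to $R'$ gives
$$
\lim_{\xi\rightarrow\infty}\frac{R(\xi z_1,\ldots,\xi z_{ln}\otimes z_{ln+1},\ldots,z_{kn})}{\xi^{ml}}=R_{1,l}(z_1,\ldots,z_{ln})\otimes R_{2,l}(z_{ln+1},\ldots,z_{kn})
$$
together with the same identity for $R'$; subtracting, the $\xi^{ml}$-coefficient of $D(\xi z_1,\ldots,\xi z_{ln}\otimes z_{ln+1},\ldots)$ cancels, so $\deg_i D<ml=\frac{m(ln)}{n}$, and by symmetry of $D$ the strict bound holds in any $ln$ of its variables. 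Thus $D$ has $kn$ variables, homogeneous degree $km$, satisfies the wheel conditions, and has degree strictly less than $\frac{mi}{n}$ in any $i\in(0,kn)$ of its variables; these are exactly the first three properties in the characterization of $P_k^{m/n}$.

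Let $W$ denote the $\kk$-vector space of shuffle elements satisfying those first three properties; then both $P_k^{m/n}\in W$ and $D\in W$. The characterization (from \cite{Shuf}) states that $P_k^{m/n}$ is the \emph{unique} element of $W$ with $\ph=1$; since $\ph^{-1}(1)\cap W$ is a coset of $\ker(\ph|_W)$ and consists of a single point, we get $\ker(\ph|_W)=0$, i.e.\ $\ph$ restricts to an injection on $W$. On the graded piece of bidegree $(kn,km)$ the norm map $\ph$ is given by the explicit formula of Subsection \ref{sub:bull}, namely evaluation at $(1,q,\ldots,q^{kn-1})$ times a constant depending only on $k,m,n$, hence it is \emph{linear} there; therefore $\ph(D)=\ph(R)-\ph(R')=c-c=0=\ph(0)$. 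By injectivity of $\ph|_W$ we conclude $D=0$, i.e.\ $R=R'$.

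I do not expect a genuine obstacle here: the real content — that the first three properties pin down a subspace on which $\ph$ is injective — is precisely the previously established characterization of $P_k^{m/n}$, and the Lemma is its relative version obtained by subtraction. The only delicate point is the passage from $\leq$ to $<$: away from multiples of $n$ it is automatic from coprimality and integrality of degrees (this is exactly where the hypothesis $\gcd(m,n)=1$ enters), while at a multiple $ln$ it relies on $R$ and $R'$ carrying the same prescribed leading tensors $R_{1,l}\otimes R_{2,l}$, so that these cancel in $D$.
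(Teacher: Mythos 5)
Your argument is internally sound: the subtraction step does upgrade the bounds to strict ones (for $n\nmid i$ the hypothesis already forbids equality, and at $i=ln$ the two prescribed limits cancel, so $\lim_{\xi\to\infty}D(\xi z_1,\ldots,\xi z_{ln}\otimes\cdots)/\xi^{ml}=0$ and the degree of $D=R-R'$ drops below $ml$), and the coset argument correctly converts ``$P_k^{m/n}$ is the unique element with the three strict properties and $\ph=1$'' into injectivity of $\ph$ on that space, whence $\ph(D)=c-c=0$ forces $D=0$. So the Lemma does follow from what you use. But this is a genuinely different route from the paper's, and the difference matters. All the content of your proof is carried by the quoted uniqueness of Subsection \ref{sub:bull}, which (read, as you need, as uniqueness among \emph{all} shuffle elements with the strict slope bounds, not just among elements already known to lie in $\CS_{m/n}$) rests on the results of \cite{Shuf} --- in effect on the characterization of $\CS_{m/n}$ by the limits \eqref{eqn:copshuf}, equivalently on the isomorphism $\Lambda\cong\CS_{m/n}$. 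The paper observes right after the Lemma that, granting this isomorphism, uniqueness reduces to the triviality that a primitive symmetric function killed by $\ph$ vanishes; your proof is essentially that observation transported to the shuffle side, using the one-dimensionality of the strict-slope piece in bidegree $(kn,km)$ in place of the full bialgebra isomorphism. The paper's own proof is instead a self-contained induction on the specializations $R^\rho$ in decreasing lexicographic order of compositions, with the wheel conditions controlling poles and Lagrange interpolation in the last variable; it is designed precisely to avoid the technical input from \cite{Shuf}, and, more importantly, it produces the recursion \eqref{eqn:lagrange}, which is the actual working tool in the proofs of Proposition \ref{prop:main} and Proposition \ref{prop:eval}. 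So your route buys brevity at the cost of importing the deep external result the paper set out to bypass, and it yields only the uniqueness statement itself, not the recursive formula on which Section \ref{sec:stab} and the Appendix depend.
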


\text{} \\
If we assume the existence of the isomorphism of bialgebras $\Lambda \stackrel{\cong}\rightarrow \CS_{m/n}$, then uniqueness becomes equivalent to the following statement on symmetric functions:
$$
\text{if }f,f'\text{ in }\Lambda\text{ are such that }\ph(f) = \ph(f') \text{ and}
$$
$$
\Delta(f) - f \otimes 1 - 1\otimes f = \Delta(f') - f' \otimes 1 - 1\otimes f' 
$$
then $f = f'$. This is clear, since the second condition implies that $f-f'$ is a constant multiple of the power sum function, and the first condition implies that this constant multiple is 0. At the end of this Section, we will give a direct proof of Lemma \ref{lem:uniqe}, which doesn't require the quite technical result that $\Lambda \cong \CS_{m/n}$. Our proof will be necessary for our computation in Section \ref{sec:stab}. \\

\subsection{} 
\label{sub:e}

%The reason why we adopt this terminology is the situation when $m=0$, in which case:
%$$
%p_k \longrightarrow p_k^{0/n} = p_{kn} \qquad \Longrightarrow \qquad f(x_1,x_2,...) \longrightarrow f(x_1^n,x_2^n,...)
%$$
%sends a symmetric function $f$ to the operator of multiplication by the usual $n-$th plethysm of $f$. Hence what is usually known as the $n-$th plethysm is, in our terminology, the $\frac 0n-$plethysm. \\

The isomorphism:
$$
\Lambda \quad \stackrel{\cong}\longrightarrow \quad \CA_{m/n} \cong \CS_{m/n},
$$
$$
p_k \quad \longrightarrow \quad p_k^{m/n} \in \CA_{m/n} \cong \CS_{m/n} \ni P_k^{m/n} 
$$
will be referred to as the $\frac mn-$\textbf{plethysm}. The same terminology will be applied to the image of any symmetric polynomial $f\in \Lambda$ under the above isomorphism. Let us emphasize the fact that when one applies such a plethysm to a symmetric function, one obtains an operator on symmetric functions. Our main Theorem \ref{thm:stab} is concerned with the $\frac mn-$plethysm of elementary symmetric functions:
$$
e_k \quad \longrightarrow \quad e_k^{m/n} \in \CA_{m/n} \cong \CS_{m/n} \ni E_k^{m/n}
$$
According to the four bullets of Lemma \ref{lem:uniqe}, the shuffle element $E_k^{m/n}$ is uniquely characterized by the following four properties: \\

\begin{itemize}
	
\item $E_k^{m/n}$ has $kn$ variables and homogenous degree $km$ \\
	
\item $E_k^{m/n}$ satisfies the wheel conditions \eqref{eqn:wheel} \\
	
\item the degree of $E_k^{m/n}$ in any number of $i\in (0,kn)$ of its variables is $ \leq \frac {mi}n$, with equality only if $i = ln$ for some $0<l<k$, in which case:
$$
\lim_{\xi \rightarrow \infty} \frac {E_k^{m/n}(\xi z_1,..., \xi z_{ln} \otimes z_{ln+1},..., z_{kn})}{\xi^{ml}} = E_l^{m/n} (z_1,...,z_{ln}) \otimes E_{k-l}^{m/n} (z_{ln+1},...,z_{kn})
$$
\item we have the normalization $\ph(E_k^{m/n}) = \delta_k^1$. 
	
\end{itemize}

\text{} \\
Indeed, the third bullet is an immediate consequence of the fact that elementary symmetric functions satisfy the coproduct property $\Delta(e_k) = \sum_{l=0}^k e_l \otimes e_{k-l}$. \\

%\text{} \\
%Assume $g = \gcd(m,n)$, and denote all $g-$plethysms of symmetric functions $f$ by $f(x^g)$. Then the third bullet above is a consequence of the fact that: 
%$$
%\Delta(e_k(x^g)) = \sum_{l=0}^k e_l(x^g) \otimes e_{k-l}(x^g)
%$$
%which is easily seen for the usual coproduct of symmetric functions. Meanwhile, the fourth bullet is a consequence of the fact that $\ph$ is multiplicative, defined by $\ph(p_k(x^g)) = \ph(p_{gk}) = 1$, and:
%$$
%\exp \left(-\sum_{k=1}^\infty \frac {p_k(x^g) \cdot z^k}k \right) = 1 + \sum_{k=1}^{\infty} e_k(x^g) \cdot (-z)^k
%$$
%The following result can be proved by using the machinery developed in Section 6 of \cite{Shuf}. Since we will not need it in this paper, we leave it as an exercise for the interested reader. When $k=1$ and $m,n$ are general, an analogous formula was obtained in \cite{Shuf}. It would be interesting to generalize the Exercise below to cover both cases, i.e. to give a formula for $E_k^{m/n}$ for arbitrary $m,n$. \\ 

\subsection{} 

We leave the following as an exercise for the interested reader, since we will not use it in this paper. The proof follows the machinery developed in \cite{Shuf}. \\

\begin{exercise}
\label{prop:wadda}

For any coprime $(m,n) \in \BZ \times \BN$ and $k\in \BN$, the shuffle element $E_k^{m/n}$ that corresponds to $e_k$ under the isomorphism \eqref{eqn:iso} is given by $\quad E_{k}^{m/n} = $
$$
 = \left[ \frac {(1-t)(1-q)}{t-q} \right]^N \emph{Sym} \left[\frac {\prod_{i=1}^{N} z_i^{r_{\frac mn}(i)} \prod_{i=1}^{k-1} \left(1-\frac { z_{in}}{q t^{i-1} z_{in+1}} \right)}{[k]!_t \left(1-\frac {t z_1}{q z_2} \right)...\left(1-\frac {t z_{N-1}}{q z_{N}} \right)}  \prod_{1\leq i < j \leq N} \omega\left ( \frac {z_i}{z_j} \right) \right] \qquad  
$$
where we write $N = kn$, $[x]_t = t^{1-x}-t$ and $[x]!_t = [1]_t...[x]_t$. 
	
\end{exercise}

\text{} \\
We will however present give a new proof of Lemma \ref{lem:uniqe}, as this will be necessary for our proof of Theorem \ref{thm:stab}: \\

\begin{proof} \textbf{of Lemma \ref{lem:uniqe}:} Let us consider a shuffle element $R$ satisfying the four bullets, and prove that it is unique by obtaining recursive formulas for it.	For any composition $\rho =(\rho_1, \rho_2,..., \rho_l)$ of $kn$, let us write:
$$
R^\rho(y_1,...,y_l) = R(y_1,...,y_1q^{\rho_1-1},...,y_l,...,y_l q^{\rho_l-1}) = 
$$
\begin{equation}
\label{eqn:spec}	=\frac {r(y_1,...,y_1q^{\rho_1-1},...,y_l,...,y_l q^{\rho_l-1})}{\prod_{1\leq i < j \leq l} \prod_{a=1}^{\rho_i} \prod_{b=1}^{\rho_j} (y_j q^b - y_i q^{a-1}t )(y_jq^b - y_i q^{a+1}t^{-1})}
\end{equation}
where $r$ is the symmetric Laurent polynomial of \eqref{eqn:shufelem}. Note that $R^{(1,...,1)}$ is $R$ itself, while the first and fourth bullets give us:
$$
R^{(kn)}(y_1) =  y_1^{km} \cdot c
$$
Our approach will be to obtain formulas for $R^\rho$, inductively in decreasing lexicographic order of $\rho$. By \eqref{eqn:shuf}, $r$ in formula \eqref{eqn:spec} is a Laurent polynomial that satisfies the wheel conditions \eqref{eqn:wheel}. These conditions imply that $r$ vanishes when:
$$
	y_i q^{a+1} = y_j q^{b} t \qquad \text{for} \quad 1 < a \leq \rho_i, \quad 1 \leq b \leq \rho_j
	$$
	$$
	y_i q^{a} t = y_j q^{b+1} \qquad \text{for} \quad 1 \leq a < \rho_i, \quad 1 \leq b \leq \rho_j
	$$
	where we assume that $\rho_i \geq \rho_j$. If the inequality is the other way, then we simply change the roles of $i$ and $j$. The above zeroes are counted with the correct multiplicities, so the fraction in \eqref{eqn:spec} may be simplified to:
	\begin{equation}
	\label{eqn:pop1}
	R^\rho(y_1,...,y_l) =\frac {r^\rho(y_1,...,y_l)}{\prod_{1\leq i < j \leq l} \prod_{b=1}^{\rho_j} (y_j q^b - y_i t ) (y_j q^b t - y_i q^{\rho_i+1})}
	\end{equation}
	where $r^\rho(y_1,...,y_l)$ is a Laurent polynomial in $l$ variables. The problem with the above formula is that the denominator goes over all pairs $i < j$, but tacitly makes the assumption that $\rho_i \geq \rho_j$. We do not want to make this assumption, so we could either use a much more complicated way to label the indices in the above formula, or find a better way to write it. We choose the latter approach. For any choice of: 
	$$
	y_i = q^{x(\square)} t^{-y(\square)} \qquad \text{and} \qquad y_j = q^{x(\square')} t^{-y(\square')}
	$$ 
	the specializations \eqref{eqn:spec} correspond to the weights of two horizontal strips of lengths $\rho_i$ and $\rho_j$ which start at the boxes $\square$ and $\square'$, respectively. Let us consider all ways to translate one horizontal strip over the other such that they \textbf{partially overlap}, by which we mean that the resulting set of boxes can be divided into two horizontal strips of lengths:
	\begin{equation}
	\label{eqn:greater}
	\max(\rho_i,\rho_j) + b \qquad \text{and} \qquad \min(\rho_i,\rho_j) - b, 
	\end{equation}
	for some $b>0$. Figure 3 below shows a certain example of partial overlapping: \\
	
	\begin{picture}(350,130)(0,45)
	\label{fig2}
	
	\put(90,180){\line(1,0){80}}
	\put(90,160){\line(1,0){80}}
	\put(90,180){\line(0,-1){20}}
	\put(110,180){\line(0,-1){20}}
	\put(130,180){\line(0,-1){20}}
	\put(150,180){\line(0,-1){20}}
	\put(170,180){\line(0,-1){20}}
	
	\put(97,168){$y_i$}
	
	\put(120,140){\line(1,0){120}}
	\put(120,120){\line(1,0){120}}
	\put(120,140){\line(0,-1){20}}
	\put(140,140){\line(0,-1){20}}
	\put(160,140){\line(0,-1){20}}
	\put(180,140){\line(0,-1){20}}
	\put(200,140){\line(0,-1){20}}
	\put(220,140){\line(0,-1){20}}
	\put(240,140){\line(0,-1){20}}
	
	\put(127,128){$y_j$}
	
	\put(90,115){\vector(-1,-2){15}}
	\put(250,115){\vector(1,-2){15}}
	
	\put(0,75){\line(1,0){80}}
	\put(0,55){\line(1,0){80}}
	\put(0,75){\line(0,-1){20}}
	\put(20,75){\line(0,-1){20}}
	\put(40,75){\line(0,-1){20}}
	\put(60,75){\line(0,-1){20}}
	\put(80,75){\line(0,-1){20}}
	
	\put(1,63){$y_j / q$}
	
	\put(22,73){\line(1,0){120}}
	\put(22,53){\line(1,0){120}}
	\put(22,73){\line(0,-1){20}}
	\put(42,73){\line(0,-1){20}}
	\put(62,73){\line(0,-1){20}}
	\put(82,73){\line(0,-1){20}}
	\put(102,73){\line(0,-1){20}}
	\put(122,73){\line(0,-1){20}}
	\put(142,73){\line(0,-1){20}}
	
	\put(28,62){$y_j$}
	
	\put(260,75){\line(1,0){80}}
	\put(260,55){\line(1,0){80}}
	\put(260,75){\line(0,-1){20}}
	\put(280,75){\line(0,-1){20}}
	\put(300,75){\line(0,-1){20}}
	\put(320,75){\line(0,-1){20}}
	\put(340,75){\line(0,-1){20}}
	
	\put(261,63){$y_j q^5$}
	
	\put(162,73){\line(1,0){120}}
	\put(162,53){\line(1,0){120}}
	\put(162,73){\line(0,-1){20}}
	\put(182,73){\line(0,-1){20}}
	\put(202,73){\line(0,-1){20}}
	\put(222,73){\line(0,-1){20}}
	\put(242,73){\line(0,-1){20}}
	\put(262,73){\line(0,-1){20}}
	\put(282,73){\line(0,-1){20}}
	
	\put(168,62){$y_j$}
	
	\put(150,37){\mbox{Figure 3}}
	
	\end{picture}
	
	\textbf{} \\
	In the above, we have $\rho_i=4$ and $\rho_j=6$. In the partial overlap on the left we have $b=1$, whereas in the one on the right we have $b=3$. We also allow the smallest of the resulting strips to have length 0. There are precisely $2\min(\rho_i,\rho_j)$ such translations, and they correspond to the terms which appear in the denominator of formula \eqref{eqn:pop1}. So we can rewrite this formula as:
	\begin{equation}
	\label{eqn:pop2}
	R^\rho(y_1,...,y_l) =\frac {r^\rho(y_1,...,y_l)}{\prod_{1\leq i < j \leq l} \prod_{b \in S^\pm_{ij}} (y_j q^b - y_i t^{\pm 1})}
	\end{equation}
	where the set $S^+_{ij}$ (respectively $S^-_{ij}$) consists of all integers $b$ which make the specialization $y_i = y_jq^b$ correspond to a translation that makes the $j-$th horizontal strip partially overlap the $i-$th horizontal strip, with the former (respectively latter) being to the left of the other. The two instances of partial overlap in Figure \ref{fig2} correspond to $y_i=y_j/q$ for the example on the left, and $y_i=y_jq^5$ for the example on the right. Note that the cardinality of each of the sets $S^\pm_{ij}$ is equal to $\min(\rho_i,\rho_j)$. The advantage of \eqref{eqn:pop2} is that it makes no assumptions about the relative sizes of $\rho_i$ and $\rho_j$, because they are encoded in the definition of the sets $S^\pm_{ij}$. 
	
	\textbf{} \\
	We will now obtain inductive formulas for the Laurent polynomial $r^\rho$ in decreasing lexicographic order of $\rho$, which will prove that this Laurent polynomial is unique. Because of the degree restrictions in the third bullet of Subsection \ref{sub:bullets}, we have:
	\begin{equation}
	\label{eqn:degrestr}
	\text{total }\deg \ r^\rho = m + 2\sum_{1\leq i < j \leq l} \min(\rho_i,\rho_j)
	\end{equation}
	and:
	$$
	\frac {m \rho_i}n  \leq \deg_{y_i} r^\rho \leq \frac {m \rho_i}n + 2\sum_{1\leq i \neq j \leq l} \min(\rho_i,\rho_j) 
	$$
	in each variable $y_i$. The space of Laurent polynomials satisfying the above degree conditions is quite large, but it is constrained by the following recurrence relations:
	\begin{equation}
	\label{eqn:cond}
	R^\rho(y_1,...,y_l)|_{y_i = y_j q^b} = R^{\rho(i \stackrel{b}\leftrightarrow j)}(y_1,...,y_l)|_{y_i = y_j q^b}
	\end{equation}
	for any $i<j$ and any $b\in S^\pm_{ij}$, where $\rho(i \stackrel{b}\leftrightarrow j)$ denotes the composition obtained from $\rho$ by replacing $\rho_i$ and $\rho_j$ by the two numbers in \eqref{eqn:greater}. Note this new composition is strictly greater than $\rho$ in lexicographic order. Moreover, the specializations in \eqref{eqn:cond} precisely correspond to the translations of horizontal strips which go into the definition of the sets $S^\pm_{ij}$ (see Figure 3 for a depiction).
	
	\textbf{} \\
	If we regard \eqref{eqn:cond} as an identity of rational functions in the single variable $y_l$, note that we have $2\sum_{i<l} \min(\rho_i,\rho_l)$ conditions that involve this variable. The degree constraints \eqref{eqn:degrestr} imply that $r^\rho$ only has terms $y_l^d$ for: 
	$$
	\frac {m\rho_l}n \leq d \leq \frac {m\rho_l}n +  2\sum_{i \neq l} \min(\rho_i,\rho_l) 
	$$ 
	Note that there are as many such $d$ as there are conditions \eqref{eqn:cond}, unless $n|\rho_l$, when we have one extra $d$ that can be determined by specifying the least order term in $y_l$. Thus, we can apply Lagrange polynomial interpolation:
	$$
	r^\rho(y_1,...,y_l) = \text{least order term} \cdot \prod_{1\leq i < l} \prod_{a \in S^\pm_{il}} \left(1 - \frac {y_l q^a}{y_i}\right) +
	$$
	$$
	+  \sum_{1\leq i < l} \sum_{a \in S^\pm_{il}}   \left[ \left(\frac {y_lq^a}{y_i} \right)^{\left \lfloor \frac {m\rho_l}n \right \rfloor+1}  r^{\rho(i \stackrel{a}\leftrightarrow j)}(y_1,...,y_l)|_{y_l q^a = y_i} \prod_{j<l}^* \prod^*_{b \in S^\pm_{jl}} \frac {y_j - y_l q^b}{y_j - y_i q^{b-a}} \right]
	$$
	where $\prod^*$ means that we exclude the linear factor which vanishes in the denominator (and also exclude the corresponding factor in the numerator). The first term is only thought to exist if $n|\rho_l$. To use the above recurrence, we need to translate this information in terms of the rational functions $R_k^\rho$:
	$$
	R^\rho(y_1,...,y_l) = \gamma \cdot R_{1, k-\frac {\rho_l}n}^{(\rho_1,...,\rho_{l-1})}(y_1,...,y_{l-1}) \cdot R_{2,\frac {\rho_l}n}^{(\rho_l)}(y_l)  + 
	$$
	\begin{equation}
	\label{eqn:lagrange}
	+ \sum^{i<l}_{a \in S^\pm_{il}} \gamma^{\pm}_{i,a} \cdot R^{\rho(i \stackrel{a}\leftrightarrow l)}(y_1,...,y_l)|_{y_l q^a = y_i} \qquad 
	\end{equation}
	where the first summand corresponds to the least order term in the variable $y_l$, which is specified by the limit in the third bullet of Subsection \ref{sub:bullets}, and only exists if $n|\rho_l$. The coefficients in the above are given by:
	\begin{equation}
	\label{eqn:gamma}
	\gamma = \prod^{i<l}_{a \in S^\pm_{il}}  \frac {1 - \frac {y_l q^a}{y_i}}{1 - \frac {y_lq^a}{y_i t^{\pm 1}}}
	\end{equation}
	$$
	\gamma^{\pm}_{i,a} =  \left( \frac {y_l q^a}{y_i} \right)^{\left \lfloor \frac {m\rho_l}n \right \rfloor + 1} \prod^{* j<l}_{b \in S_{jl}^{\pm'}} \frac {1 - \frac {y_l q^b}{y_j}}{1 - \frac {y_iq^{b-a}}{y_j}} \prod^{j<l}_{b \in S_{jl}^{\pm'}} \frac {1 - \frac {y_iq^{b-a}}{y_jt^{\pm' 1}}}{1 - \frac {y_l q^b}{y_j t^{\pm'1}}}
	$$
	Recall that the $*$ above the product signs mean that we discard the factor which vanishes from the denominator, and also the corresponding factor from the numerator. The above formula concludes the proof, because it shows how the rational function $R = R^{(1,...,1)}$ can be reconstructed inductively from the constant $c \sim R^{(kn)}$ and the various $R_{1,l}$ and $R_{2,l}$. 
	
\end{proof}

\subsection{} Lemma \ref{lem:uniqe} has a number of consequences, such as:
\begin{equation}
\label{eqn:inverse}
P_k^{m/n} \left( z_1^{-1},...,z_{kn}^{-1} \right) = P_k^{-m/n}(z_1,...,z_{kn})
\end{equation}
Indeed, this follows from the fact that the four bullets of Subsection \ref{sub:bull} still hold when we replace $z_i \rightarrow z_i^{-1}$ and $m \rightarrow -m$. Along the same lines, one sees that the composition of two isomorphisms \eqref{eqn:iso}:
$$
\CS_{m/n} \stackrel{\cong}\longrightarrow \Lambda \stackrel{\cong}\longrightarrow \CS_{-m/n}
$$
is simply given by $R(z_1,...,z_N) \longrightarrow R(z_1^{-1},...,z_N^{-1})$. In particular, the subalgebra $\CS_0 \subset \CS$ is invariant under inverting all the variables. In the Appendix, we will use the proof of Lemma \ref{lem:uniqe} to obtain the following computation: \\

%In particular, let us consider the shuffle element $R = P_k^{m/n}$ that corresponds to the $k-$th power sum function. Because this element is primitive, we have:
%$$
%\Delta_{m/n} (R) = R \otimes 1 + 1 \otimes R
%$$
%we have $R_{1,i} = R_{2,i} = 0$ for all $i \in (0,k)$. This allows us to compute the values of $R$ at the contents of hook diagrams: \\

\begin{proposition}
	\label{prop:eval}
	
	For any $l \leq kn$ and the partition $\mu_l = \underbrace{(nk-l+1,1,...,1)}_{l\text{ terms}}$:
	$$
	P_k^{m/n}(\mu_l) = \sum_{i=0}^{n-1} q^{\sum_{j=1}^{nk-l} \left \lfloor \frac {mj+i}n \right \rfloor} t^{-\sum_{j=1}^{l-1} \left \lceil \frac {mj-i}n \right \rceil} \cdot
	$$
	\begin{equation}
	\label{eqn:eval}
	\cdot \frac {1-q^{-k}}{1-tq^{-1}} \prod_{i=1}^{nk-l} \frac {1-q^{-i}}{1-q^{-i-1}t} \prod_{i=1}^{l-1} \frac {1-t^i}{1-t^{i+1}q^{-1}}
	\end{equation}
	where the evaluation of a shuffle element at a partition is defined in \eqref{eqn:well}. We assume $\gcd(m,n) = 1$, otherwise the greatest common divisor can be absorbed in $k$. \\ 
	
	%\footnote{Note that the RHS is symmetric under the transformation $q \leftrightarrow t^{-1}$ and $\lambda_l \leftrightarrow \lambda_{nk-l+1}$, as it should be, up to the linear factor $(1-q^{-k})$. This latter factor is simply a reflection of the presence of a normalizing factor $(1-t^k)$ in the definition of $P_k^{m/n}$,  which breaks the symmetry, but is nonetheless desirable for our formulas later on} 
	
\end{proposition}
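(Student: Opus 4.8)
The plan is to compute $P_k^{m/n}(\mu_l)$ by specializing the inductive reconstruction of shuffle elements from the proof of Lemma~\ref{lem:uniqe}, inducting on $l$. The $kn$ box-weights of $\mu_l=(nk-l+1,1^{l-1})$ are $1,q,\dots,q^{nk-l}$ along the arm (corner box included) and $t^{-1},\dots,t^{-(l-1)}$ down the leg. Taking the arm to be one horizontal strip and each leg box its own length-one strip identifies $P_k^{m/n}(\mu_l)$ with the specialization $R^\rho(y_1,\dots,y_l)$ of that proof, where $R=P_k^{m/n}$, $\rho=(nk-l+1,1,\dots,1)$, and $(y_1,\dots,y_l)=(1,t^{-1},\dots,t^{-(l-1)})$. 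The base case $l=1$ is a single row of $kn$ boxes: then $R^\rho=R^{(kn)}$ is $R$ with all its variables on one $q$-string, so $P_k^{m/n}(\mu_1)=R^{(kn)}(1)$, which is read off from $R^{(kn)}(y_1)=y_1^{km}R^{(kn)}(1)$ together with the normalization $\ph(P_k^{m/n})=1$ and the expression for $\ph$ on $\CS_{m/n}$ recalled in Subsection~\ref{sub:bull}. That this agrees with~\eqref{eqn:eval} at $l=1$ uses $\gcd(m,n)=1$: the numbers $\sum_{j=1}^{kn-1}\lfloor(mj+i)/n\rfloor$, $i=0,\dots,n-1$, then form an arithmetic progression with common difference $k$, so $\sum_{i=0}^{n-1}q^{\sum_j\lfloor(mj+i)/n\rfloor}$ collapses to $q^{\sum_j\lfloor mj/n\rfloor}\cdot\frac{1-q^{kn}}{1-q^k}$, reproducing the known value of $\ph$.

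For the inductive step I would peel off the last leg variable $y_l$ via~\eqref{eqn:lagrange}. Since $\rho_l=1$, the ``least order term'' summand is absent --- there is nothing to split off when $n\nmid\rho_l$, i.e. when $n\ge 2$, and when $n=1$ it vanishes because $p_k$ is primitive, so $\Delta(p_k)=p_k\otimes 1+1\otimes p_k$ has no middle coproduct pieces --- and each $S^{\pm}_{il}$ is a singleton, so only $2(l-1)$ terms remain: the two that merge $y_l$ into the arm block, and the $2(l-2)$ that merge it with a higher leg box. After the substitution $y_lq^a=y_i$ and the specialization of the surviving variables, each such term is $\gamma^{\pm}_{i,a}$ (evaluated at $(1,t^{-1},\dots,t^{-(l-1)})$) times the value of $P_k^{m/n}$ at a smaller configuration $D$ of box-weights, where arm and leg may be translated --- which is immaterial, by homogeneity of shuffle elements; the configurations $D$ containing three weights in ratio $\{q,t^{-1},t/q\}$ contribute zero, because the wheel conditions~\eqref{eqn:wheel} make the numerator vanish there to higher order than the lone degenerating factor $tz_i-qz_j$. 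Organizing the surviving contributions by the residue mod $n$ of the arm translation --- which is the relevant invariant because the exponent vector $r_{m/n}(i)=\lceil mi/n\rceil-\lceil m(i-1)/n\rceil$ defining $P_k^{m/n}$ is periodic with period $n$ --- yields a closed recursion whose unknowns are indexed by $i\in\{0,\dots,n-1\}$, and its solution is~\eqref{eqn:eval}: the prefactor $\frac{1-q^{-k}}{1-tq^{-1}}$ is inherited from the base case, one factor $\frac{1-q^{-i}}{1-q^{-i-1}t}$ is accumulated for each stripped arm box, one factor $\frac{1-t^i}{1-t^{i+1}q^{-1}}$ for each stripped leg box, and the monomial parts of the $\gamma^{\pm}_{i,a}$ --- built from the prefactors $(y_lq^a/y_i)^{\lfloor m\rho_l/n\rfloor+1}$ --- assemble the $q$- and $t$-powers into the floor/ceiling sums.

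The main obstacle is the bookkeeping in the previous paragraph: pinning down exactly which of the $2(l-1)$ terms survive, evaluating the $\gamma^{\pm}_{i,a}$ at $(1,t^{-1},\dots,t^{-(l-1)})$, verifying the reorganization of the surviving arm translations into residues mod $n$, and matching the accumulated powers of $q$ and $t$ with the floor/ceiling sums --- a computation of the same nature as, but more involved than, the one already carried out in the proof of Lemma~\ref{lem:uniqe}.
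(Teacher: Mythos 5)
Your base case and general strategy (specialize the recursion from the proof of Lemma \ref{lem:uniqe} to the hook weights and induct on $l$) are sound, but the inductive step as you set it up does not close, and this is a genuine gap. Because you order the composition as $\rho=(nk-l+1,1,\dots,1)$, the variable eliminated by \eqref{eqn:lagrange} is a length-one leg strip. Carrying out the evaluation at $(y_1,\dots,y_l)=(1,t^{-1},\dots,t^{-(l-1)})$, one finds that \emph{three} of the $2(l-1)$ coefficients $\gamma^{\pm}_{i,a}$ survive generically, not a family organized by residues mod $n$: the peeled box can land just right of the arm (giving $P_k^{m/n}$ at $\mu_{l-1}$, as desired), but also just right of the bottom leg box (giving the evaluation at the genuine partition $(nk-l+1,2,1^{l-3})$) and just left of the leg box beneath it (giving an evaluation at a configuration containing the weight $q^{-1}t^{2-l}$, outside the first quadrant). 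Neither of the latter two evaluations vanishes: the wheel conditions \eqref{eqn:wheel} are not triggered by these weight configurations (no triple of their weights has cyclic ratios $\{q,t^{-1},t/q\}$ or $\{t,q^{-1},q/t\}$, and no pair sits in ratio $q/t$ to create a pole), so your appeal to the wheels to discard them fails, and the recursion escapes the hook family $\{\mu_l\}$. Your ``closed recursion with unknowns indexed by $i\in\{0,\dots,n-1\}$'' has no counterpart in the actual structure of the surviving terms; in the correct argument the sum over $i$ enters only through the claimed answer and is verified at the end by a floor/ceiling identity.

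The paper avoids this by the opposite ordering, $\rho=(1,\dots,1,nk-l+1)$, so that the variable peeled by \eqref{eqn:lagrange} is the \emph{arm}. Then (with $R_{1,i}=R_{2,i}=0$ by primitivity of $p_k$, killing the first summand) exactly two coefficients survive at the specialization $y_i=t^{i-l}$, namely $\gamma^{+}_{1,nk-l+1}$ and $\gamma^{-}_{l-1,-1}$, and both resulting configurations are hooks of size one less: one directly equals $\mu_{l-1}$ after rescaling, while the other becomes $\mu_{l-1}$ only after inverting all variables, which by \eqref{eqn:inverse} converts $P_k^{m/n}$ into $P_k^{-m/n}$. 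This is the key ingredient missing from your proposal: the induction must be run simultaneously for $\pm m$, with the symmetry \eqref{eqn:inverse} and homogeneity feeding one branch into the other, and the final step is the nontrivial identity \eqref{eqn:desire} matching the two-term recursion against the sum over residues $i=0,\dots,n-1$. Your treatment of the base case $l=1$ via the normalization $\ph$ is fine, but the inductive engine needs to be rebuilt along these lines.
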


%$$
%P_k^{m/n}(\lambda_l) = \left(\sum_{i=0}^{n-1} q^{\sum_{j=1}^{nk-l} \left \lfloor \frac {mj+i}n \right \rfloor - \left \lfloor \frac {mj}n \right \rfloor} t^{\sum_{j=1}^{l-1} \left \lceil \frac {mj}n \right \rceil - \left \lceil \frac {mj-i}n \right \rceil} \right)(1-q^{-k})
%$$
%\begin{equation}
%\label{eqn:eval}
%\frac {q^{\sum_{i=1}^{nk-l} \left \lfloor \frac {mi}n \right \rfloor} t^{- \sum_{i=1}^{l-1} \left \lceil \frac {mi}n \right \rceil}  }{1-tq^{-1}} \prod_{i=1}^{nk-l} \frac {1-q^{-i}}{1-q^{-i-1}t} \prod_{i=1}^{l-1} \frac {1-t^i}{1-t^{i+1}q^{-1}}
%\end{equation}

\section{Stable bases}
\label{sec:stab}

\subsection{} 
\label{sub:stab}

We will now describe a construction of Maulik and Okounkov (\cite{MO2}), known as the \textbf{stable basis}. This construction is geometric and very general, but we will simply spell out the definition in our particular setup. For any $\frac mn \in \BQ$, Maulik-Okounkov prove the existence of a unique integral \footnote{A symmetric function is integral if it expands in terms of Schur functions with coefficients in $\BZ[q^{\pm 1}, t^{\pm 1}]$. A basis is integral if it consists entirely of integral functions} basis $\{s_\lambda^{m/n}\}_{\lambda \text{ partition}}$ of $\Lambda$:
$$
s_\lambda^{m/n}  = \sum_{\mu \unlhd \lambda} M_\mu \cdot c^\mu_\lambda(q,t) 
$$
where $M_\mu$ are the renormalized Macdonald polynomials \eqref{eqn:renormalization}, such that:
\begin{equation}
\label{eqn:normal}
c_\lambda^\lambda(q,t) = \prod_{\square \in \lambda} \left(t^{-l(\square)} - q^{a(\square)+1} \right)
\end{equation}
and $c^\mu_\lambda \in \BZ[q^{\pm 1}, t^{\pm 1}]$ are such that for all $\mu \lhd \lambda$ in the dominance ordering: 
\begin{equation}
\label{eqn:lim1}
\degu c^\mu_\lambda < \frac mn(o_{\mu}-o_\lambda) + {\max}_{\mu}
\end{equation}
\begin{equation}
\label{eqn:lim2}
\degd c^\mu_\lambda \geq \frac mn(o_{\mu}-o_\lambda) + {\min}_{\mu}
\end{equation}
where we define the \textbf{upper degree} of a Laurent polynomial $c(q,t)$ as:
\begin{equation}
\label{eqn:ord1}
\degu c(q,t) = \text{ order of }c (az,bz) \text{ as } z\longrightarrow \infty 
\end{equation}
and the \textbf{lower degree} as:
\begin{equation}
\label{eqn:ord2}
\degd c(q,t) = \text{ order of }c (az,bz) \text{ as } z\longrightarrow 0 
\end{equation}
\footnote{For example, the function $c(q,t) = q - t + t^{-1}q^{-1}$ has $\degu c = 1$ and $\degd c = -2$} and set:
$$
o_\lambda = \sum_{\square \in \lambda} o_\square \qquad \qquad {\min}_\lambda =  - \sum_{\square \in \lambda} l(\square) \qquad \qquad {\max}_\lambda = |\lambda| + \sum_{\square \in \lambda} a(\square) 
$$
\footnote{The common combinatorial notation is $\min_\lambda = -n(\lambda)$ and $\max_\lambda = |\lambda|+n'(\lambda)$. We will retain the notations $\min$ and $\max$ because the stable basis construction is more general than our setup} From now on, the phrase ``term of highest/lowest degree" of a Laurent polynomial $c(q,t)$ will refer to the sum of monomials in $q,t$ for which the order in \eqref{eqn:ord1}/\eqref{eqn:ord2} is attained. For example, we have:
$$
\hd \left(q - t + t^{-1}q^{-1} \right)= q-t, \qquad \qquad \ld \left(q - t + t^{-1}q^{-1} \right) = t^{-1}q^{-1}
$$

\text{}

\subsection{} The above construction arises from the geometry of the Hilbert scheme, whose equivariant $K-$theory groups are identified with $\Lambda$, and whose fixed points are identified with partitions $\lambda$. Then the stable basis is a distinguished basis of $K-$theory, which is inductively built from the stable leaves of a one-dimensional torus action. The polynomial \eqref{eqn:normal} is simply the unstable part of the tangent space at the fixed point $\lambda$. The rational number $\frac mn$ refers to a rational multiple of the line bundle $\CO(1)$ on the Hilbert scheme, which appears in the $K-$theoretic stable basis construction.

\text{} \\
Let us explain conditions \eqref{eqn:lim1} and \eqref{eqn:lim2}: the Newton polygon of $c_\lambda^\mu(q,t)$ is required to lie in a certain diagonal strip in the plane, and the only case when the southeast side of the strip is allowed to be touched by the Newton polygon is when $\mu = \lambda$. The notations $\nwarrow$ and $\searrow$ refer to the northwest and southeast directions in the strip, and \eqref{eqn:ord1}-\eqref{eqn:ord2} pick up the vertex of the Newton polygon which is extremal in the direction of the arrow. Although we will neither need nor prove this statement, it is expected that $s_\lambda^0$ given by the above definition coincides with the usual Schur function $s_\lambda$. Moreover, note that it is immediate from the definition that:
$$
s_\lambda^{r+1} = \frac {\nabla s_\lambda^r}{\prod_{\square \in \lambda} \chi_\square}
$$
for any rational number $r$. Finally, when $r=\infty$, note that conditions \eqref{eqn:lim1} and \eqref{eqn:lim2} boil down to requiring that $c^\mu_\lambda = 0$ for $\mu \lhd \lambda$. Therefore, $s_\lambda^\infty = P_\lambda$ are the usual Macdonald polynomials. However, these are not integral, and so do not fit our definition precisely. Thus we will think of the $\infty$ case as being special. \\

%\subsection{} To prove Theorem \ref{thm:stab}, we will need to compute the orders \eqref{eqn:ord1} and \eqref{eqn:ord2} of the shuffle elements $E_k^{m/n}$. Doing so is quite difficult from Proposition \ref{prop:wadda}, so we will need to apply a certain modification of the machinery of \cite{Shuf}. Let us consider as in \loccit the subalgebra:
%$$
%\CS_{m/n} \hookrightarrow \CS
%$$
%consisting of symmetric rational functions $R(z_1,...,z_N)$ \eqref{eqn:shufelem} in $N=kn$ variables and of homogenous degree $M=km$ (for any $k$), which satisfy the wheel conditions \eqref{eqn:wheel} and have the additional property that:
%$$
%\lim_{\xi \longrightarrow \infty} \frac {R(\xi z_1,...,\xi z_i,z_{i+1},...,z_N)}{\xi^{\frac {mi}n}}
%$$
%exists and is finite for all $i$. Such a symmetric rational function will be said to have degree $\frac mn$. It was shown in \cite{Shuf} that $\CS_{m/n}$ is a subalgebra of $\CS$ and that it is the image of $\CA_{m/n}$ under the isomorphism $\CA^+ \cong \CS$. \\

%\begin{proposition} 
%	\label{prop:cj}
	
%	Suppose we are given an integral symmetric function $\sigma \in \Lambda$ whose decomposition in terms of Macdonald polynomials is known:
%	$$
%	\sigma = \sum_\lambda M_\lambda \cdot d^\lambda
%	$$
%	and that for all $\lambda$< we have:
%	$$
%	\degu(d^\lambda) \leq p_\lambda + \max_\lambda
%	$$
%	$$
%	\degd(d^\lambda) \geq p_\lambda + \min_\lambda
%	$$
%	for some integers $p_\lambda \in \BZ$. If $\hd d^\lambda = q^{\max_\lambda} r_\lambda$ for some $r_\lambda \in \BQ(q^{\pm 1}, t^{\pm 1})$, then:
%	$$
%	\sigma = \sum_\lambda s_
%	$$
	
%\end{proposition}

\subsection{} Our main Theorem \ref{thm:stab} involves showing how the operator $\oe_k^{m/n}$ acts on the stable basis, for any $k,m,n$ with $n>0$ and $\gcd(m,n)=1$. To obtain the required formula, we need to compute bounds on the upper and lower degrees of the matrix coefficients of this operator in the basis $M_\lambda$. By formula \eqref{eqn:shuf}, we need to bound the upper and lower degrees of the expressions:
$$
E_k^{m/n}(\lamu)
$$ 
as $\lamu$ goes over all skew diagrams of $N = kn$ boxes, where $E_k^{m/n}$ is the unique shuffle element which is characterized by the four bullets in Subsection \ref{sub:e}. For any diagram $\lamu$, let us write:
$$
\#_{\lamu} = \text{the number of pairs of boxes in }\lamu\text{ of the same content}
$$
Then we will prove the following: \\

\begin{proposition} 
\label{prop:main}

For any skew diagram $\lambda \backslash \mu$ of $N = kn$ boxes, we have:
\begin{equation}
\label{eqn:relation1}
\degu E_k^{m/n}(\lambda \backslash \mu) \leq \frac mn (o_\lambda - o_\mu) + \#_{\lamu} + \frac {k(n-1)}2
\end{equation}

\begin{equation}
\label{eqn:analog}
\degd E_k^{m/n}(\lambda \backslash \mu) \geq \frac mn (o_\lambda - o_\mu) - \#_{\lamu} - \frac {k(n+1)}2
\end{equation}
In the first relation, we have equality if and only if $\lamu$ is a vertical $k-$strip of $n-$ribbons $B_1,...,B_k$ (see Subsection \ref{sub:ribbon} for the definition), in which case the term of highest degree \footnote{That is, the sum of monomials which produces the top degree term in \eqref{eqn:ord1}} is:
\begin{equation}
\label{eqn:lana}
\emph{h.d. } E_k^{m/n}(\lambda \backslash \mu) = \left(\frac qt\right)^{\flat_{\lamu}} t^{\#_\lamu} \cdot
\end{equation}
$$
\prod_{i=1}^k \prod_{j=1}^n \chi_{j}(B_i)^{\left \lfloor \frac {mj}n \right \rfloor - \left \lfloor \frac {m(j-1)}n \right \rfloor} \cdot \frac {\prod^{\blacksquare \in \lamu, \ o_\blacksquare = o_\square}_{\square \emph{ outer corner of }\lamu}  \left(\frac {q \chi_\blacksquare}{t\chi_\square} - 1\right)}{\prod^{\blacksquare \in \lamu, \ o_\blacksquare = o_\square}_{\square \emph{ inner corner of }\lamu} \left(\frac {q \chi_\blacksquare}{t\chi_\square} - 1 \right)}
$$
where $\flat_{\lamu}$ denotes the number of boxes $\blacksquare \in \lamu$ which are precisely $p$ diagonal steps away from an inner corner or vertical inner boundary of $\lamu$, counted with multiplicity $p+1$, and $\chi_{j}(B_i)$ denotes the content of the $j-$th box in the ribbon $B_i$. \\

\end{proposition}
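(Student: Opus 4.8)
The plan is to compute $E_k^{m/n}(\lamu)$ directly by running the recursion established in the proof of Lemma \ref{lem:uniqe}, reading off bounds on the upper and lower degrees as we go. Grouping the boxes of $\lamu$ by rows, with row lengths $\rho=(\rho_1,\dots,\rho_l)$ and leftmost-box weights $y_1,\dots,y_l$, we have $E_k^{m/n}(\lamu)=R^\rho(y_1,\dots,y_l)$ for $R=E_k^{m/n}$ in the notation of \eqref{eqn:spec}. Iterating \eqref{eqn:lagrange} expresses this number as a sum indexed by the plays of the bubbling game of Subsection \ref{sub:game}: each summand is a product over the moves of the play of the coefficients \eqref{eqn:gamma} and their $\gamma^{\pm}_{i,a}$ counterparts, evaluated at the weights, terminating either at the base case $R^{(kn)}=y_\star^{km}\,\delta_k^1$ or, when a row of length $ln$ is completed and split off, at a product $E_l^{m/n}(\cdot)\,E_{k-l}^{m/n}(\cdot)$ of lower evaluations supplied by the third bullet of Subsection \ref{sub:e}. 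This last feature turns the argument into an induction on $k$ (the case $k=1$ being the same but with no split-off terms), and, for fixed $k$, into an induction on the lexicographic order of $\rho$, which the recursion \eqref{eqn:lagrange} strictly increases.

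First I would establish the upper bound \eqref{eqn:relation1}. This amounts to estimating $\degu$ of each coefficient $\gamma$ and $\gamma^{\pm}_{i,a}$ after specializing the $y$'s to the actual weights: the monomial prefactors $(y_lq^a/y_i)^{\lfloor m\rho_l/n\rfloor+1}$, summed over a full play, assemble into the $\frac mn(o_\lambda-o_\mu)$ term (this is where the coprimality of $m,n$ and the definition of $r_{\frac mn}$ enter), whereas the $\omega$-type ratios $\frac{1-y_lq^a/y_i}{1-y_lq^a/(y_it^{\pm 1})}$ and the partial-overlap denominators $\prod_{b\in S^{\pm}_{ij}}(y_jq^b-y_it^{\pm 1})$ from \eqref{eqn:pop2} are responsible for the remaining $\#_\lamu+\frac{k(n-1)}2$. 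Feeding in the inductive bound on the lower evaluations, every summand then satisfies $\degu\le\frac mn(o_\lambda-o_\mu)+\#_\lamu+\frac{k(n-1)}2$. The lower bound \eqref{eqn:analog} then costs nothing more: the substitution $q,t\mapsto q^{-1},t^{-1}$ turns $E_k^{m/n}(\lamu)$ into $E_k^{-m/n}(\lamu)$ by \eqref{eqn:inverse} (and its evident analogue for $E_k^{m/n}$), and since $\#_\lamu$ depends only on the shape, applying the upper bound to $-m/n$ yields $\degd E_k^{m/n}(\lamu)\ge\frac mn(o_\lambda-o_\mu)-\#_\lamu-\frac{k(n-1)}2$, which already implies \eqref{eqn:analog}.

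Next comes the equality analysis and the explicit leading term. I would argue, step by step in the game, that a bubble move producing a horizontal strip of length $>n$ makes the corresponding branch lose degree strictly, and dually that once a strip of length $n$ is split off, only the branch continuing the unique valid play of Lemma \ref{lem:game} avoids a degree drop. By Lemma \ref{lem:game}, equality in \eqref{eqn:relation1} therefore holds exactly when $\lamu$ is coverable by a vertical $k$-strip of $n$-ribbons, and by Lemma \ref{lem:strip} that covering, hence the maximizing play, is unique. Computing the top-degree coefficient along this play is then a matter of multiplying together the highest-degree parts of all the $\gamma$'s and $\gamma^{\pm}_{i,a}$'s encountered: the monomial prefactors collect into $\prod_{i=1}^k\prod_{j=1}^n\chi_j(B_i)^{\lfloor mj/n\rfloor-\lfloor m(j-1)/n\rfloor}$, the partial-overlap denominators at equal-content pairs of boxes give $t^{\#_\lamu}$, the $\omega$-ratios at diagonally adjacent boxes give $(q/t)^{\flat_\lamu}$, and the factors attached to the southeast ends of the ribbons — where they abut the inner and outer corners of $\lamu$ — assemble into the corner quotient displayed in \eqref{eqn:lana}.

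The \textbf{main obstacle} is precisely this degree bookkeeping. The coefficients $\gamma^{\pm}_{i,a}$ are intricate, the index sets $S^{\pm}_{ij}$ depend on the relative row lengths, and one must check both that every non-winning branch genuinely drops in degree and that along the winning play the leading coefficients telescope to the claimed closed form; pinning down the statistic $\flat_\lamu$ and the inner/outer corner product is the delicate part. A secondary subtlety is that \eqref{eqn:lagrange} is an identity of rational functions, so a given branch evaluates $E_\bullet^{m/n}$ not at the original weights but at the weights of the \emph{new} skew diagram produced by the bubble move; keeping that evaluation consistent with the recursion, and checking that the split-off term of \eqref{eqn:lagrange} reproduces $E_l^{m/n}\otimes E_{k-l}^{m/n}$ through the third bullet of Subsection \ref{sub:e}, is where the induction on $k$ must be set up with care.
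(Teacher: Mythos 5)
Your plan for \eqref{eqn:relation1} and \eqref{eqn:lana} is essentially the paper's own proof: the same induction on $k$ and on the lexicographic order of $\rho$ through the recursion \eqref{eqn:lagrange}, the same degree estimates for $\gamma$ and $\gamma^{\pm}_{i,a}$, the same reduction of the equality case to the bubbling game via Lemmas \ref{lem:strip} and \ref{lem:game}, and the same telescoping of leading coefficients along the unique winning play (the bookkeeping you defer is exactly what the paper carries out). The genuine gap is your two-line derivation of the lower bound \eqref{eqn:analog}.

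You assert that the substitution $q,t\mapsto q^{-1},t^{-1}$ turns $E_k^{m/n}(\lamu)$ into $E_k^{-m/n}(\lamu)$, and conclude $\degd E_k^{m/n}(\lamu)\ge \frac mn(o_\lambda-o_\mu)-\#_{\lamu}-\frac {k(n-1)}2$. That stronger bound is false, so the step cannot be repaired by citing \eqref{eqn:inverse}, which is an identity with $q,t$ held fixed; the extra substitution $q,t\mapsto q^{-1},t^{-1}$ does not preserve $E_k^{m/n}$ because the fourth-bullet normalization $\ph(E_k^{m/n})=\delta_k^1$ of Subsection \ref{sub:e} is not equivariant under it. Concretely, take $k=1$, $n=2$, $m=1$ and $\lamu$ the domino $\lambda=(2)$, $\mu=\emptyset$, with weights $1$ and $q$, so $o_\lambda-o_\mu=1$ and $\#_{\lamu}=0$. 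Since $e_1=p_1$, either Proposition \ref{prop:eval} with $l=1$ or a direct evaluation of the explicit formula for $P_1^{1/2}$ gives $E_1^{1/2}(\lamu)=\frac {q(1-q)(1-q^2)}{(t-q)(t-q^2)}$, whose lower degree is $-1$: this attains the paper's bound $\frac 12-\frac {k(n+1)}2=-1$ but violates yours, which would be $0$. Indeed one checks $\bigl[E_1^{1/2}(\lamu)\bigr]_{q,t\mapsto q^{-1},t^{-1}}=\frac {t^2}{q}\,E_1^{-1/2}(\lamu)$, and the discarded monomial is exactly the difference between your constant $\frac {k(n-1)}2$ and the correct $\frac {k(n+1)}2$. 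The same oversight appears in your base case: $R^{(kn)}(y_1)$ is not $y_1^{km}\delta_k^1$ but $y_1^{km}R(1,q,\dots,q^{kn-1})$, and solving $\ph(R)=\delta_k^1$ leaves a $q,t$-dependent factor (for $k=1$, $n=2$, $m=1$ it is $q\frac {(1-q)(1-q^2)}{(t-q)(t-q^2)}$) whose upper and lower degrees differ by $n$; this asymmetry is precisely why the two constants in the Proposition differ, and the leading part of this factor is also needed for the corner quotient in \eqref{eqn:lana}. The correct route for \eqref{eqn:analog} is the one the paper intends: rerun the same induction with $\degd$ in place of $\degu$, tracking the lower degree of this base-case factor, rather than appealing to a $q,t\mapsto q^{-1},t^{-1}$ symmetry.
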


\begin{proof} Let us prove relations \eqref{eqn:relation1} and \eqref{eqn:lana} on $\degu$ and leave the analogous \eqref{eqn:analog} as an exercise. We will write: 
$$
R_k = E_k^{m/n}(z_1,...,z_N)
$$
with $N=kn$ and use the notations in the proof of Lemma \ref{lem:uniqe}. In particular, we will prove the following slightly more general statement. \\

\begin{claim} 
\label{claim}

Let us consider a union $D = \{D_1,...,D_l\}$ of horizontal strips $D_i$ lying in the first quadrant. Then we have:
\begin{equation}
\label{eqn:bound}
\degu R_k(D) \leq \frac mn \cdot o_D + \#_D + \frac {k(n-1)}2
\end{equation}
by which we mean that we apply the rational function $R_k$ to the set of weights of the boxes in $D$. We write $o_D = \sum_{\square \in D} o_\square$ and $\#_D$ stands for the number of pairs of boxes of $D$ with the same content. 

\end{claim}

\textbf{} \\ 
Let $\rho = (\rho_1, ..., \rho_l)$ denote the composition of $N=kn$ determined by the lengths of our horizontal strips. We will prove the above claim by induction in increasing order of $k$, and then in decreasing lexicographic order of $\rho$. The base case, namely when $\rho = (N)$, is dealt with by the fourth bullet in Subsection \ref{sub:e}. The quantity whose upper degree we need to bound is:
\begin{equation}
\label{eqn:above}
R_k(D) = R_k^\rho(y_1,...,y_l)
\end{equation}
where we set $y_i$ to be the weight of the leftmost box of the $i-$th strip, and we denote by $o_i$ the content of the box of weight $y_i$. The above quantity \eqref{eqn:above} is a certain evaluation of the LHS of \eqref{eqn:lagrange}, so we may replace it by the corresponding evaluation of the RHS of \eqref{eqn:lagrange}. We may apply the induction hypothesis to the terms that appear in the RHS:
$$
\degu R_{k-1}^{\rho'}(y_1,...,y_{l-1}) \quad \leq \quad \frac mn \cdot o_{D'} + \#' + \frac {(k-1)(n-1)}2 
$$

$$
\degu R_k^{\rho(i \stackrel{a}\leftrightarrow l)}(y_1,...,y_l)|_{y_l q^a \rightarrow y_i} \leq \frac mn \cdot o_{D^{\pm}_{i,a}} + \#^{\pm}_{i,a} + \frac {k(n-1)}2 
$$
where $\#'$ and $\#^{\pm}_{i,a}$ denote the number of pairs of boxes of the same content among the arguments of each $R$ in the LHS, computed with respect to the boxes in the sets $D'$ and $D^{\pm}_{i,a}$. These latter sets of boxes are what is obtained from the set $D$ by removing the $l-$th strip, respectively by performing the substitution $y_l q^a \rightarrow y_i$ (which amounts to translating the entire $l-$th horizontal strip to start at the box $a$ units to the left of the start of the $i-$th horizontal strip). As for the coefficients $\gamma$ and $\gamma^{\pm}_{i,a}$, these are just products of linear factors, and we claim that:
\begin{equation}
\label{eqn:ex1}
\degu \gamma = \#_D - \#' + \frac mn(o_D-o_{D'}) + \frac {n-1}2
\end{equation}

\begin{equation}
\label{eqn:ex2}
\degu \gamma^{\pm}_{i,a} \leq \#_D - \#^{\pm}_{i,a} + \frac mn(o_D-o_{D_{i,a}^{\pm}})
\end{equation}
Once we prove the above claims, we will have proved the induction step that establishes Claim \ref{claim}. So let us show how to prove the more difficult equality \eqref{eqn:ex2} and leave the first as an analogous exercise. According to formula \eqref{eqn:gamma}, the coefficient $\gamma^{\pm}_{i,a}$ consists of three parts:
$$
\degu  \left( \frac {y_l q^a}{y_i} \right)^{\left \lfloor \frac {m\rho_l}n \right \rfloor + 1} = (o_l+a-o_i) \left( \left \lfloor \frac {m\rho_l}n \right \rfloor + 1 \right)
$$
$$
\degu \prod^{*  j<l}_{b \in S_{jl}^{\pm'}} \frac {1 - \frac {y_l q^b}{y_j}}{1 - \frac {y_iq^{b-a}}{y_j}}  = - M(o_l+a-o_i) + \sum^{j<l}_{b \in S_{jl}^{\pm'}} M(o_l+b-o_j) - M(o_i+b-a-o_j)
$$
\begin{equation}
\label{eqn:swarm}
\degu \prod^{j<l}_{b \in S_{jl}^{\pm'}} \frac {1 - \frac {y_iq^{b-a}}{y_jt^{\pm' 1}}}{1 - \frac {y_l q^b}{y_j t^{\pm'1}}} = \sum^{j<l}_{b \in S_{jl}^{\pm'}} M(o_i+b-a-o_j \mp' 1) - M(o_l+b-o_j \mp' 1) \qquad \quad
\end{equation}
where we write $M(z) = \max(z,0)$. The first equation is clear, so let's focus on the last two. The first term in the RHS of the second equation above comes from the $*$ in the products: there is a factor missing. Adding the three equalities together gives us:
$$
\degu \gamma^{\pm}_{i,a} = (o_l+a-o_i) \left( \left \lfloor \frac {m\rho_l}n \right \rfloor + 1 \right) - M(o_l+a-o_i) + \sum^{j<l}_{b \in S_{jl}^{\pm'}}
$$
$$
\left[ M(o_l+b-o_j) - M(o_i+b-a-o_j) + M(o_i+b-a-o_j \mp' 1) - M(o_l+b-o_j \mp' 1) \right] 
$$

$$
\leq \frac {m\rho_l}n (o_l-o_i+a) + \sum^{j<l}_{b \in S_{jl}^{\pm'}} \left[ \delta_{\pm'(o_l-o_j+b)>0} - \delta_{\pm'(o_i-o_j+b-a)>0}  \right] = \frac mn(o_D-o_{D_{i,a}^{\pm}}) + \#_D - \#^{\pm}_{i,a}
$$
where the last equality holds simply by unwinding the definition of $S_{jl}^{\pm'}$. One observes that the above inequality becomes an equality if and only if either: \\

\begin{itemize} 
	
\item $o_l+a=o_i$, so the corresponding translation in $S_{il}^\pm$ comes from sliding $D_l$ diagonally in the southwest-northeast direction \\

\item $o_l+a > o_i$ and $n|\rho_l$, so the corresponding translation is $S_{il}^\pm$ comes from sliding $D_l$ diagonally and then to the left. 

\end{itemize}

\text{} \\
Because $\gcd(m,n)=1$, the second bullet above would imply $\rho_l > n$. Since successively sliding the strip $D_l$ would only increase its length, the fourth bullet of Subsection \ref{sub:e} would eventually produce an answer of 0 when we evaluate $R_k$ at the set of weights of the boxes in $D$. Therefore, to produce a non-zero term we can only successively iterate diagonal translations as in the first bullet. In other words, the equality in \eqref{eqn:bound} can only be attained if we can perform a succession of moves of the following kinds: \\

\begin{enumerate}

\item remove an entire horizontal strip of length $n$ (this corresponds to \eqref{eqn:ex1}) \\

\item slide a horizontal strip diagonally until it partially overlaps with another horizontal strip (this corresponds to \eqref{eqn:ex2})

\end{enumerate}
 
\textbf{} \\
and manage to eliminate the entire set of boxes $D$ without ever obtaining a horizontal strip of length $>n$. We are free to choose which strips to remove/slide at each step, and when $D = \lamu$ is a skew Young diagram there is a preferred choice which will make all but one of the coefficients $\gamma^{\pm}_{i,a}$ vanish. 

\textbf{} \\
The thing to do is to let $D_l$ be the topmost row of $\lamu$. If we slide it diagonally onto a row $D_i$ that is more than one step down, then the corresponding coefficient $\gamma^{+}_{i,a}$ vanishes because of the numerator of \eqref{eqn:gamma}. Moreover, translations in $S^-_{jl}$ cannot be diagonal slides, and hence do not contribute anything to our computation. So the only slide that produces a non-zero coefficient is precisely one row down, thus producing two overlapping strips. Take the longest of these strips: again, if we slide it more than one row down, the corresponding coefficient $\gamma^{+}_{i,a}$ vanishes because of the numerator of \eqref{eqn:gamma}. Repeating this argument, we see that the only way to obtain a non-zero coefficient is by repeatedly sliding one row at a time and then removing it whenever it gains length $n$. This is precisely the game in Subsection \ref{sub:game}, and we proved in Lemma \ref{lem:game} that the game ends with all boxes removed, thus producing equality in \eqref{eqn:bound}, only if $\lamu$ can be covered by a vertical $k-$strip of $n-$ribbons.

\textbf{} \\
In this case, we need only evaluate the term of highest degree in $R(\lamu)$, and we will do so by using the same formula \eqref{eqn:lagrange}. Letting $D=\lamu$ and writing $\rho_1,...,\rho_l$ for the lengths of the rows of $D$ ordered bottom to top, we have:
$$
\hd R_k(D) = \left( \hd
 R_k(D') \right)\cdot \left( \hd \gamma^{+}_{i,a} \right) = \left( \hd R_k(D') \right) \cdot \left(\frac qt\right)^{\left \lfloor \frac {m\rho_l}n \right \rfloor} \cdot
$$

$$
\prod^{o_l+b=o_j}_{j\neq l, \ b \in S_{jl}^+} \frac {1 - \frac {y_lq^b}{y_j}}{1 - \frac {y_iq^{b-a}}{y_j}} \prod^{o_l+b=o_j+1}_{j\neq l, \ b \in S_{jl}^+} \frac {1 - \frac {y_iq^{b-a}}{y_j t}}{1 - \frac {y_lq^b}{y_j t}} \prod^{o_l + b = o_j + 1}_{j\neq l, \ b \in S_{jl}^+} \frac qt
$$
where $D'$ is the multiset of boxes obtained by sliding the first row of $D$ diagonally down onto its second row. We continue sliding the top row as many times as the height of the outer ribbon $B_\out$ of $\lamu$. As explained in Subsection \ref{sub:game}, at the end of this procedure we will have straightened the outer ribbon into a horizontal strip of length $n$. When we remove this horizontal strip, we have:
$$
\hd R_k(D) = \left( \hd R_k(D \backslash B_\out) \right) \cdot \left(\hd \gamma \right) \left(\frac qt\right)^{*} \cdot
$$

$$
\cdot \left(\frac qt\right)^{\flat_\out}  \prod^{\blacksquare \in B_\out, o_\blacksquare = o_\square - 1}_{\square \text{ start of some row}} \frac  {\frac {q \chi_\blacksquare}{\chi_\square} - 1}{\frac qt - 1} \prod^{\blacksquare \in B_\out, o_\blacksquare = o_\square}_{\square \text{ start of some row}} \frac {\frac qt - 1}{\frac {q \chi_\blacksquare}{t \chi_\square} - 1}
$$
where $\flat_\out$ denotes the number of boxes $\blacksquare \in B_\out$ which are precisely $p$ steps diagonally from an inner corner or vertical inner boundary of $\lamu$, counted with multiplicity $p+1$. The exponent $*$ of $q/t$ on the first line precisely amounts to the ratio between the quantity: 
$$
\prod_{i=1}^n \chi_i(B_\out)^{\left \lfloor \frac {mi}n \right \rfloor - \left \lfloor \frac {m(i-1)}n \right \rfloor}
$$
before and after bubbling down the outer ribbon. We will now use the definition of $\gamma$ in \eqref{eqn:gamma}, and also cancel like factors in the second line above:
$$
\hd R_k(D) = \left( \hd R_k(D \backslash B_\out) \right) t^{|\blacksquare \in B_\out \text{ on the same diagonal as } \square \in D\backslash B_\out|}
$$
$$
\left(\frac qt\right)^{\flat_\out} \prod_{i=1}^n \chi_i(B_\out)^{\left \lfloor \frac {mi}n \right \rfloor - \left \lfloor \frac {m(i-1)}n \right \rfloor} \cdot \frac {\prod^{\blacksquare \in B_\out, \ o_\blacksquare = o_\square}_{\square \text{ outer corner of }\mu}  \left(\frac {q \chi_\blacksquare}{t\chi_\square} - 1 \right)}{\prod^{\blacksquare \in B_\out, \ o_\blacksquare = o_\square}_{\square \text{ inner corner of }\mu} \left(\frac {q \chi_\blacksquare}{t\chi_\square} - 1\right)}
$$
Iterating the above for the skew diagram $D\backslash B_\out$, which is covered by a vertical strip of one less $n-$ribbons than $D$, we obtain precisely \eqref{eqn:lana}.

%$$
%\ld R_k(D) = \left( \ld R_k(D \backslash B_\out) \right) \cdot \prod_{i=1}^n \chi_i(B_\out)^{r_{m,n}(i)} \cdot \left(\frac qt\right)^{n-1} \frac 1{t-q}
%$$

%$$
%\cdot \left(\frac tq\right)^{\#} \prod^{\blacksquare \in B_\out, \ o_\blacksquare = o_\square}_{\square \text{ outer corner of }\lamu} \frac  {t \chi_\square - q \chi_\blacksquare}{t \chi_\square - q \chi_\square} \prod^{\blacksquare \in B_\out, \ o_\blacksquare = o_\square}_{\square \text{ inner corner of }\lamu} \frac {t \chi_\square - q \chi_\square}{t \chi_\square - q \chi_\blacksquare} = 
%$$

\end{proof}

\subsection{} We will now use the previous result to obtain Theorem \ref{thm:stab}. A very important aspect of the proof is the fact that the operators $e_k^{m/n}$ are integral, i.e.:
\begin{equation}
\label{eqn:integrality}
e_k^{m/n} \cdot s_\mu^{m/n} = \sum_\lambda s_\lambda^{m/n} \cdot b^\lambda(q,t)
\end{equation}
for some Laurent polynomials $b^\lambda(q,t) \in \BZ[q^{\pm 1}, t^{\pm 1}]$. This condition can be weakened to allow $b^\lambda$ to have denominators arbitrary polynomials in $q/t$ and the subsequent proof would still apply, but unfortunately even this weakened statement  cannot be obtained with the tools developed in the present paper. In fact, the proof of \eqref{eqn:integrality} is geometric. The stable basis elements $s_\lambda^{m/n}$ are $K-$theory classes supported on certain Lagrangian subvarieties $L_\lambda$ of Hilbert schemes, and $e_k^{m/n}$ is given by a certain vector bundle on the so-called ``flag Hilbert scheme" studied in \cite{N}. Because the flag Hilbert scheme is a Lagrangian correspondence, it takes any $K-$theory class supported on the subvariety $L_\mu$ to an integral combination of $K-$theory classes supported on the subvarieties $L_\lambda$. \\

\begin{proof} \textbf{of Theorem \ref{thm:stab}:} Let us consider the class:
$$
\sigma = e_k^{m/n} \cdot s_\mu^{m/n} = e_k^{m/n} \cdot \sum_{\nu \leq \mu} M_\nu c_\mu^\nu(q,t)
$$
for some positive integer $k$ and some partition $\mu$. We may compute the RHS using formula \eqref{eqn:shuf}:
$$
\sigma = \sum_{\lambda} M_\lambda \cdot \sum_{\nu \unlhd \mu}  c_\mu^\nu(q,t) E_k^{m/n}(\lambda \backslash \nu) \prod_{\blacksquare \in \lambda \backslash \nu} \left[ \left(t - q \chi_\blacksquare \right) \prod_{\square \in \nu} \omega \left(\frac {\chi_{\blacksquare}}{\chi_{\square}} \right) \right] =: \sum_{\lambda} M_\lambda \cdot d^\lambda
$$
We want to prove that $\sigma$ equals the RHS of \eqref{eqn:pieri}. \\

\begin{claim}

To complete the proof of Theorem \ref{thm:stab}, it is enough to prove the following assertions about the coefficients $d^\lambda$ of the above expression:
\begin{equation}
\label{eqn:inequality0}
\degu d^\lambda \leq \frac mn(o_\lambda - o_\mu)+ {\max}_\lambda + \frac {k(n-1)}2
\end{equation}

\begin{equation}
\label{eqn:inequality}
\degd d^\lambda \geq \frac mn(o_\lambda - o_\mu)+ {\min}_\lambda + \frac {k(n-1)}2
\end{equation}
and that the first inequality becomes an equality precisely when $\lambda \backslash \mu$ can be covered by a $k-$strip of $n-$ribbons $B_1,...,B_k$, with the term of highest degree equal to the monomial:
\begin{equation}
\label{eqn:lowestorder}
\emph{h.d. } d^\lambda = (-1)^{|\lambda|} q^{\max_\lambda} (-1)^{\emph{ht }} \prod_{i=1}^k \prod_{j=1}^n \chi_{j}(B_i)^{\left \lfloor \frac {mj}n \right \rfloor - \left \lfloor \frac {m(j-1)}n \right \rfloor}
\end{equation}

\end{claim}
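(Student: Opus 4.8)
The plan is to transfer the analytic information about the shuffle-element evaluations $E_k^{m/n}(\lambda\backslash\nu)$ from Proposition \ref{prop:main} into degree bounds for the composite coefficients $d^\lambda$, and then feed those bounds into the uniqueness characterization of the stable basis from Subsection \ref{sub:stab}. First I would record that, by formula \eqref{eqn:shuf}, each $d^\lambda$ is a sum over $\nu\unlhd\mu$ of the term $c_\mu^\nu(q,t)\, E_k^{m/n}(\lambda\backslash\nu)\prod_{\blacksquare}[(t-q\chi_\blacksquare)\prod_\square \omega(\chi_\blacksquare/\chi_\square)]$. I would compute $\degu$ and $\degd$ of each factor: the normalization \eqref{eqn:normal} together with \eqref{eqn:lim1}--\eqref{eqn:lim2} controls $c_\mu^\nu$, Proposition \ref{prop:main} controls $E_k^{m/n}(\lambda\backslash\nu)$ in terms of $\#_{\lambda\backslash\nu}$ and $o_\lambda-o_\nu$, and the product $\prod_\blacksquare(t-q\chi_\blacksquare)\prod_\square\omega(\chi_\blacksquare/\chi_\square)$ is a product of explicit linear factors whose upper/lower degrees are a bookkeeping exercise in contents and the quantities $\min_\lambda,\max_\lambda$. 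Adding these up, the $\#_{\lambda\backslash\nu}$ contributions from $E_k^{m/n}$ and from the $\omega$-product should cancel (this is the arithmetic heart of why the bound is clean), and one is left with \eqref{eqn:inequality0} and \eqref{eqn:inequality}, with the $o_\nu$-dependence telescoping into $o_\mu$ via the shift $\nabla$-relation or via direct summation over $\nu$.

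The next step is the equality analysis. The bound \eqref{eqn:inequality0} is a sum of bounds over $\nu$, so equality in it requires the maximal $\nu$-term to attain its individual bound; by the equality clause of Proposition \ref{prop:main} this forces $\lambda\backslash\nu$ to be a vertical $k$-strip of $n$-ribbons, and then one must argue that among all $\nu\unlhd\mu$ only $\nu=\mu$ can realize the top degree — this uses that for $\nu\lhd\mu$ strictly, the dominance-order gap forces $\degu c_\mu^\nu$ to be strictly below the extremal value allowed by \eqref{eqn:lim1}, so those terms are subleading. Once only $\nu=\mu$ survives, I would assemble $\text{h.d.}\,d^\lambda$ by multiplying the three highest-degree pieces: $\text{h.d.}\,c_\mu^\mu = \prod(t^{-l}-q^{a+1})$ has top term $(-1)^{|\mu|}\prod q^{a(\square)+1}$, the $\omega$-and-$(t-q\chi)$ product contributes a sign $(-1)^{|\lambda\backslash\mu|}$ and the remaining $q$-powers that complete $\max_\mu$ to $\max_\lambda$, and \eqref{eqn:lana} contributes the ribbon content product together with the $(q/t)^{\flat}$ and $t^{\#}$ factors. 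The claim is that the $(q/t)^{\flat_{\lambda\backslash\mu}}$, $t^{\#_{\lambda\backslash\mu}}$, and the inner/outer-corner ratio in \eqref{eqn:lana} all collapse — against the corner and $\omega$ factors — to pure powers of $q$ (absorbed into $q^{\max_\lambda}$) times the sign $(-1)^{\text{ht}}$; verifying this cancellation of all $t$-dependence is the delicate computation.

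The final step is to invoke the definition of the stable basis. The coefficient $b^\lambda$ in \eqref{eqn:integrality} is, by the change of basis $s_\nu^{m/n}=\sum_{\kappa\unlhd\nu}M_\kappa c_\nu^\kappa$, related triangularly to the $d^\lambda$; since $s_\lambda^{m/n}$ satisfies \eqref{eqn:lim1}--\eqref{eqn:lim2} and the product in \eqref{eqn:pieri} is the proposed $b^\lambda$, I would check that the proposed expansion $\sigma=\sum_\lambda s_\lambda^{m/n}\cdot(\pm\text{ribbon product})$ produces, after re-expanding into the $M_\lambda$ basis, coefficients satisfying exactly \eqref{eqn:inequality0}--\eqref{eqn:lowestorder}; then the integrality assumption \eqref{eqn:integrality} together with the uniqueness built into the stable basis forces $\sigma$ to coincide with that expansion. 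Concretely: the difference between $\sigma$ and the RHS of \eqref{eqn:pieri}, expanded in $M_\lambda$, would have coefficients whose upper degree is strictly below $\frac mn(o_\lambda-o_\mu)+\max_\lambda+\frac{k(n-1)}{2}$ and whose lower degree is at least $\frac mn(o_\lambda-o_\mu)+\min_\lambda+\frac{k(n-1)}{2}$; combined with integrality this pins the difference down to zero by an induction on dominance order, exactly as in the defining uniqueness of $s_\lambda^{m/n}$.

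The main obstacle I anticipate is the equality case in the second paragraph: tracking the precise sign $(-1)^{\text{ht}}$ and showing that every power of $t$ cancels in \eqref{eqn:lowestorder} requires carefully matching the inner/outer corner contributions of \eqref{eqn:lana}, the $\flat$ and $\#$ exponents, and the contents appearing in the $\omega$-product, across all $k$ ribbons and across the telescoping over the game of Subsection \ref{sub:game}. The degree inequalities themselves \eqref{eqn:inequality0}--\eqref{eqn:inequality} are comparatively routine once Proposition \ref{prop:main} is in hand, but the identification of the leading monomial as precisely $q^{\max_\lambda}$ times a sign times the ribbon-content product — with no leftover $t$ — is where the bookkeeping is genuinely subtle.
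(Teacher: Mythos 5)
Your final paragraph is a correct way to prove the Claim, but it is organized differently from the paper. The paper never compares $\sigma$ with the candidate answer: writing $d^\lambda=\sum_{\nu\unrhd\lambda}c_\nu^\lambda b^\nu$ as in \eqref{eqn:george}, it determines $b^\lambda$ directly by descending induction in dominance order, using the window \eqref{eqn:lim1}--\eqref{eqn:lim2} to see that only $\nu=\lambda$ can produce the right endpoint of the interval \eqref{eqn:interval}, whence $\hd d^\lambda=(-1)^{|\lambda|}q^{\max_\lambda}b^\lambda$; this needs the additional observation that $b^\lambda$ is supported on a single $q,t$--diagonal, since otherwise $c_\lambda^\lambda b^\lambda$ would be wider than the interval permits. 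Your ``verify the candidate and subtract'' route buys you freedom from that single-diagonal subtlety: writing $\beta^\lambda$ for the ribbon monomial in \eqref{eqn:pieri}, at a dominance-maximal $\lambda$ with $b^\lambda\neq\beta^\lambda$ the $M_\lambda$-coefficient of the difference is $c_\lambda^\lambda(b^\lambda-\beta^\lambda)$, and your two bounds give $\degu(b^\lambda-\beta^\lambda)<\frac mn(o_\lambda-o_\mu)+\frac{k(n-1)}2\leq \degd(b^\lambda-\beta^\lambda)$, which is impossible for a nonzero Laurent polynomial --- this is exactly where integrality \eqref{eqn:integrality} enters, as it also does in the paper. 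The price is the verification you defer: that the candidate $\sum_\lambda s_\lambda^{m/n}\beta^\lambda$, expanded in the $M$-basis, satisfies \eqref{eqn:inequality0}--\eqref{eqn:lowestorder}. This check does go through, because $\beta^\lambda$ is a monomial lying exactly on the diagonal $\frac mn(o_\lambda-o_\mu)+\frac{k(n-1)}2$ (per ribbon one has $\sum_{j=1}^n o_{\square_j}\left(\lfloor\tfrac{mj}n\rfloor-\lfloor\tfrac{m(j-1)}n\rfloor\right)=\frac mn o_B+\frac{n-1}2$, using $\sum_{j=1}^{n-1}\lfloor\tfrac{mj}n\rfloor=\tfrac{(m-1)(n-1)}2$ for coprime $m,n$), so the $\lambda=\kappa$ term attains the upper bound with leading term exactly \eqref{eqn:lowestorder}, while all $\lambda\rhd\kappa$ terms are strictly smaller by \eqref{eqn:lim1}; note the paper needs the same computation implicitly when it asserts that the interval shifted by the weight of $b^\nu$ equals \eqref{eqn:interval}. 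Finally, be aware that your first two paragraphs concern establishing the hypotheses of the Claim (the bounds on $d^\lambda$), which in the paper is carried out after this Claim and is not part of its proof.
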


\begin{proof} Let us assume the conclusion of the claim, and consider the expansion of $\sigma$ in terms of the stable basis, as provided by \eqref{eqn:integrality}: 
$$
\sum_\lambda M_\lambda \cdot d^\lambda =  \sum_\lambda s_\lambda^{m/n} \cdot b^\lambda
$$
We need to prove that $b^\lambda = (-1)^{\high} \prod_{i=1}^k \prod_{j=1}^n \chi_{j}(B_i)^{\left \lfloor \frac {mj}n \right \rfloor - \left \lfloor \frac {m(j-1)}n \right \rfloor}$, which we will do by descending induction over $\lambda$. By definition,
\begin{equation}
\label{eqn:george}
d^\lambda = \sum_{\nu} s_\nu^{m/n}|_\lambda \cdot b^\nu = \sum_\nu c_\nu^\lambda(q,t) \cdot b^\nu
\end{equation}
By the definition of the stable basis, the only terms which contribute to the above sum are $\nu \unrhd \lambda$. Moreover, all the weights $q^\alpha t^\beta$ which appear in the RHS for $\nu \rhd \lambda$ have the property that $\alpha - \beta$ lies in the interval:
$$
\left[ \frac mn(o_\lambda - o_\nu) + {\min}_\lambda, \ \frac mn(o_\lambda - o_\nu) + {\max}_\lambda \right) + \text{weight of }b^\nu
$$
which by the induction hypothesis equals the interval: 
\begin{equation}
\label{eqn:interval}
\left[ \frac mn(o_\lambda - o_\mu) + {\min}_\lambda + \frac {k(n-1)}2, \ \frac mn(o_\lambda - o_\mu) + {\max}_\lambda + \frac {k(n-1)}2 \right)
\end{equation}
By the assumption in the claim, the weights of $d^\lambda$ lie in the same interval, but they are allowed to also equal the right endpoint of the interval. Therefore, the only term in \eqref{eqn:george} which can produce the right endpoint of the interval is when $\nu = \lambda$, in which case the definition of the stable basis tells us that the highest degree term of $c_\lambda^\lambda(q,t)$ equals $\prod_{\square \in \lambda} (-q^{a(\square)+1})$. We conclude that
$$
\hd d^\lambda = b^\lambda \cdot (-1)^{|\lambda|} q^{\max_\lambda} 
$$
which proves the induction step. Note that, in the above argument, we used the fact that $b^\lambda$ only consists of monomials $q^{\alpha}t^{\beta}$ for a single value of $\alpha-\beta$. This is because otherwise, the interval of exponents $\alpha-\beta$ which would appear in $c^\lambda_\lambda(q,t) \cdot b^\lambda$ would have length strictly greater than \eqref{eqn:interval}, thus contradicting \eqref{eqn:george}. 

\end{proof}

\text{} \\
We will finish by proving inequality \eqref{eqn:inequality0} and relation \eqref{eqn:lowestorder}, and leave \eqref{eqn:inequality} as an analogous exercise. By the definition of the stable basis in \eqref{eqn:ord1}, we have:
\begin{equation}
\label{eqn:square}
\degu c_\mu^\nu(q,t) \leq \frac mn(o_\nu - o_\mu) + {\max}_\nu,
\end{equation}
with equality if and only if $\mu = \nu$, while Proposition \ref{prop:main} gives us:
$$
\degu E_k^{m/n}(\lambda \backslash \nu) \leq \frac mn(o_\lambda - o_\nu) + \#_{\lambda \backslash \nu} + \frac {k(n-1)}2
$$
It is straightforward to compute the upper and lower limits of a product of linear factors, so note that we have: 
$$
\degu \prod_{\blacksquare \in \lambda \backslash \nu} \left[ \left(t - q \chi_\blacksquare \right) \prod_{\square \in \nu} \omega \left(\frac {\chi_{\blacksquare}}{\chi_{\square}} \right) \right] = \sum_{\blacksquare \in \lambda \backslash \nu}\left[ 1 +  \max(0,o_\blacksquare) + \right.
$$
$$
\left. + \# \text{ of }\square \in \nu \text{ on the same diagonal as } \blacksquare \right] = {\max}_\lambda - {\max}_\nu - \#_{\lambda \backslash \nu}
$$
Adding the above three inequalities gives us precisely the desired \eqref{eqn:inequality0}. As for \eqref{eqn:lowestorder}, we need to compare the highest order terms. Since:
\begin{equation}
\label{eqn:hlemmur}
\hd d^\lambda = \hd  c_\mu^\mu(q,t) \cdot \hd E_k^{m/n}(\lamu) \cdot \hd \prod_{\blacksquare \in \lambda \backslash \mu} \left[ \left(t - q \chi_\blacksquare \right) \prod_{\square \in \mu} \omega \left(\frac {\chi_{\blacksquare}}{\chi_{\square}} \right) \right] \qquad \quad
\end{equation}
we need to compute the three highest degree terms in the RHS. \footnote{The reason why only $c_\mu^\mu(q,t)$ contributes to the highest degree term follows from the fact that equality is attained in \eqref{eqn:square} only for $\mu = \nu$} By \eqref{eqn:normal}, we have:
\begin{equation}
\label{eqn:sterling}
\hd c_\mu^\mu(q,t) = (-1)^{|\mu|} q^{\max_\mu}
\end{equation}
It is straightforward to compute the lowest order term of a product of linear factors:
$$
\hd \prod_{\blacksquare \in \lambda \backslash \mu} \left[ \left(t - q \chi_\blacksquare \right) \prod_{\square \in \mu} \omega \left(\frac {\chi_{\blacksquare}}{\chi_{\square}} \right) \right] = (-t)^{kn}  \hd \prod_{\blacksquare \in \lambda \backslash \mu} \frac {\prod^{\square\text{ inner}}_{\text{corner of }\mu} \left(\frac {q \chi_\blacksquare}{t\chi_\square} - 1 \right)}{\prod^{\square\text{ outer}}_{\text{corner of }\mu} \left(\frac {q \chi_\blacksquare}{t\chi_\square} - 1\right)} = 
$$
$$
= (-t)^{kn} \frac {\prod^{\square\text{ inner}}_{\text{corner of }\mu} \left[\prod^{\blacksquare \in \lamu}_{o_\blacksquare > o_\square} \left( \frac {q \chi_\blacksquare}{t \chi_\square} \right) \prod^{\blacksquare \in \lamu}_{o_\blacksquare = o_\square} \left(\frac {q \chi_\blacksquare}{t \chi_\square} - 1\right) \prod^{\blacksquare \in \lamu}_{o_\blacksquare < o_\square} (-1) \right]}{\prod^{\square\text{ outer}}_{\text{corner of }\mu} \left[\prod^{\blacksquare \in \lamu}_{o_\blacksquare > o_\square} \left(\frac {q \chi_\blacksquare}{t\chi_\square} \right) \prod^{\blacksquare \in \lamu}_{o_\blacksquare = o_\square} \left(\frac {q \chi_\blacksquare}{t\chi_\square} - 1 \right) \prod^{\blacksquare \in \lamu}_{o_\blacksquare < o_\square} (-1) \right]} = (-t)^{kn} \cdot
$$
$$
\frac {\prod^{\square\text{ inner}}_{\text{corner of }\mu} \prod^{\blacksquare \in \lamu}_{o_\blacksquare = o_\square} \left(\frac {q \chi_\blacksquare}{t\chi_\square} - 1 \right)}{\prod^{\square\text{ outer}}_{\text{corner of }\mu} \prod^{\blacksquare \in \lamu}_{o_\blacksquare = o_\square} \left(\frac {q \chi_\blacksquare}{t\chi_\square} - 1\right)} \prod_{\blacksquare \in D_1 \cup D_3} q^{x_0} \prod_{\blacksquare \in D_2 \cup D_4} q^{x+1} t^{y_0-y-1} \prod_{\blacksquare \in D_2 \cup D_3} (-1)
$$
where the sets $D_1$, $D_2$, $D_3$, or $D_4$ consist of those boxes $\blacksquare = (x,y) \in \lamu$ that are on the same diagonal as a box $\square = (x_0,y_0)$ which is an inner corner, outer corner, vertical inside edge or horizontal inside edge of $\lamu$. We force a factor out of the above formula to obtain:
$$
= (-t)^{kn} \left(\frac tq \right)^{\flat_\lamu} \frac {\prod^{\square\text{ inner}}_{\text{corner of }\mu} \prod^{\blacksquare \in \lamu}_{o_\blacksquare = o_\square} \left(\frac {q \chi_\blacksquare}{t\chi_\square} - 1 \right)}{\prod^{\square\text{ outer}}_{\text{corner of }\mu} \prod^{\blacksquare \in \lamu}_{o_\blacksquare = o_\square} \left(\frac {q \chi_\blacksquare}{t\chi_\square} - 1\right)} (-1)^{|D_2|+|D_3|} \prod_{\blacksquare \in \lamu} q^{x+1}t^{y_0-y-1}   
$$
where recall that $\flat_\lamu$ counts the number of $\blacksquare$ which are $p$ steps in the northeast direction from an inner corner or vertical inside edge, counted with multiplicity $p+1$. Here we have used the simple observation that:
$$
\sum_{\blacksquare = (x,y) \in D_1 \cup D_3} (x-x_0+1) = \sum_{\blacksquare = (x,y) \in D_1 \cup D_3} (y-y_0+1) = \flat_\lamu
$$
But now observe that:
$$
\sum_{\blacksquare = (x,y) \in \lamu} (x+1) = {\max}_\lambda -{\max}_\mu, \qquad \sum_{\blacksquare = (x,y) \in \lamu} (y_0-y-1)  = - \#_\lamu - kn  
$$
so when $\lamu$ is a vertical $k-$strip of $n-$ribbons $B_1,...,B_k$, the highest degree term in the above formula amounts to:
$$
= (-1)^{kn}  \left(\frac tq \right)^{\flat_\lamu} \frac {\prod^{\square\text{ inner}}_{\text{corner of }\mu} \prod^{\blacksquare \in \lamu}_{o_\blacksquare = o_\square} \left(\frac {q \chi_\blacksquare}{t\chi_\square} - 1\right)}{\prod^{\square\text{ outer}}_{\text{corner of }\mu} \prod^{\blacksquare \in \lamu}_{o_\blacksquare = o_\square} \left(\frac {q \chi_\blacksquare}{t\chi_\square} - 1\right)} (-1)^{\high} q^{\max_\lambda-\max_\mu} t^{-\#_\lamu} 
$$
Multiplying the above factor with \eqref{eqn:lana} and \eqref{eqn:sterling} gives us the desired \eqref{eqn:hlemmur}.

\end{proof}

\section{Connection to LLT polynomials}
\label{sec:action}

\subsection{} For an $n-$ribbon $B$, let us denote by:
$$
\theta_m(B) = (-1)^{\high B}  \prod_{j=1}^n \chi_{j}(B)^{\left \lfloor \frac {mj}n \right \rfloor - \left \lfloor \frac {m(j-1)}n \right \rfloor}
$$
the coefficients that appear in \eqref{eqn:pieri}. We will now recall the framework of LLT polynomials, developed in \cite{LLT} in the case $m=0$. The generalization to arbitrary $m$ is straightforward, and we will show it to be equivalent to the original picture. If we iterate \eqref{eqn:pieri}, we obtain for any composition $\nu = (\nu_1,...,\nu_t)$:
\begin{equation}
\label{eqn:comp}
e_{\nu_1}^{m/n} \ldots e_{\nu_t}^{m/n}\cdot s^{m/n}_\mu = \sum_\lambda s^{m/n}_\lambda \sum_{(B_1,\ldots, B_{|\nu|}) \in K_\lamu^\nu} \prod_{i=1}^{|\nu|} \theta_m(B_i)
\end{equation}
where $K_\lamu^\nu$ is the set of \textbf{ribbon tableaux} of shape $\lamu$ and weight $\nu$. By definition, such a ribbon tableau consists of a vertical $\nu_1-$strip of $n-$ribbons on top of a vertical $\nu_2-$strip of $n-$ribbons $\ldots$ on top of a vertical $\nu_t-$strip of $n-$ribbons. We write:
$$
\high T = \sum_{i=1}^{|\nu|} \high B_i, \qquad \qquad \theta_m(T) = \prod_{i=1}^{|\nu|} \theta_m(B_i)
$$
Then let us construct the generating series of such ribbon tableaux:
\begin{equation}
\label{eqn:ribbon}
G_\lamu^{m/n}(x_1,x_2,\ldots) = \sum_{\nu} x_1^{\nu_1} \ldots x_t^{\nu_t}  \sum_{T \in K_\lamu^\nu} \theta_m(T)
\end{equation}

\begin{lemma}
The function $G_\lamu^{m/n}$ is \textbf{symmetric} in the variables $x_1,x_2,\ldots$. \\
\end{lemma}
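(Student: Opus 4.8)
The cleanest strategy is to reduce the symmetry of $G_\lamu^{m/n}$ to the commutativity of the operators $\oe_k^{m/n}$ inside the algebra $\CA$. Recall that $\CA_{m/n} \cong \BQ(q,t)[\oe_1^{m/n}, \oe_2^{m/n}, \ldots]$ is a \emph{commutative} subalgebra of $\CA$, so in particular $\oe_j^{m/n} \oe_k^{m/n} = \oe_k^{m/n} \oe_j^{m/n}$ for all $j, k$. By iterating the $m/n$ Pieri rule \eqref{eqn:pieri}, equation \eqref{eqn:comp} expresses the matrix coefficient of $\oe_{\nu_1}^{m/n} \cdots \oe_{\nu_t}^{m/n}$ between $s_\mu^{m/n}$ and $s_\lambda^{m/n}$ (in the stable basis) as $\sum_{T \in K_\lamu^\nu} \theta_m(T)$, which is precisely the coefficient of the monomial $x_1^{\nu_1} \cdots x_t^{\nu_t}$ in $G_\lamu^{m/n}$.

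\textbf{Key steps.} First I would fix $\lambda$ and $\mu$ and regard $G_\lamu^{m/n}$ as a formal series in $x_1, x_2, \ldots$; since it lies in the completion of the polynomial ring and its coefficient of $x_1^{\nu_1}\cdots x_t^{\nu_t}$ depends only on the composition $\nu$, to prove symmetry it suffices to check invariance under each adjacent transposition $x_i \leftrightarrow x_{i+1}$, i.e.\ to show that swapping $\nu_i$ and $\nu_{i+1}$ in a composition $\nu$ does not change the coefficient. Second, using \eqref{eqn:comp}, that coefficient is the matrix entry $\langle s_\lambda^{m/n} \mid \oe_{\nu_1}^{m/n} \cdots \oe_{\nu_t}^{m/n} \mid s_\mu^{m/n}\rangle$; swapping $\nu_i$ and $\nu_{i+1}$ corresponds to swapping the adjacent factors $\oe_{\nu_i}^{m/n}$ and $\oe_{\nu_{i+1}}^{m/n}$ in this product. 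Third, since these two operators commute in $\CA$ (being generators of the commutative subalgebra $\CA_{m/n}$), the product operator is unchanged, hence so is the matrix entry, hence so is the coefficient of the monomial. This gives invariance under all adjacent transpositions, and therefore symmetry.

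\textbf{Main obstacle.} There are really two minor technical points rather than a single hard obstacle. The first is bookkeeping: one must check that \eqref{eqn:comp} holds with the stated indexing of ribbon tableaux, i.e.\ that iterating \eqref{eqn:pieri} $t$ times genuinely produces the set $K_\lamu^\nu$ of ribbon tableaux of shape $\lamu$ and weight $\nu$ with the claimed product of $\theta_m$'s — this is essentially a definition-unwinding, since a ribbon tableau of weight $\nu$ is by construction a stack of vertical $\nu_i$-strips of $n$-ribbons. The second, slightly more delicate point, is that the argument uses that $\{s_\lambda^{m/n}\}$ is an honest basis of $\Lambda$, so that the operator $\oe_{\nu_1}^{m/n}\cdots\oe_{\nu_t}^{m/n}$ is determined by, and determines, its matrix coefficients in this basis; this is guaranteed by the Maulik-Okounkov construction recalled in Section \ref{sec:stab}. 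Once these are in place the symmetry is immediate from commutativity in $\CA$, so I expect no genuine difficulty. (An alternative, more combinatorial route — exhibiting an explicit weight-preserving bijection on ribbon tableaux realizing $x_i \leftrightarrow x_{i+1}$, in the spirit of the Bender-Knuth involutions used in \cite{LLT} — would also work and would avoid invoking the representation-theoretic input, but it is longer and the commutativity argument is the natural one here.)
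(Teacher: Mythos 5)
Your proposal is correct and is essentially the paper's own argument: the paper likewise writes the coefficient of $x_1^{\nu_1}\cdots x_t^{\nu_t}$ in $G_\lamu^{m/n}$ as the matrix coefficient $\left(e_{\nu_1}^{m/n}\cdots e_{\nu_t}^{m/n}\right)_\mu^\lambda$ via \eqref{eqn:comp} and then invokes commutativity of the $e_k^{m/n}$ to permute the factors. The only cosmetic difference is that the paper permutes by an arbitrary $\sigma\in S(t)$ at once rather than reducing to adjacent transpositions.
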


\begin{proof} Let us write $A_\mu^\lambda$ for the coefficient of $s_\lambda^{m/n}$ in the expansion of $A \cdot s_\mu^{m/n}$, for any operator $A$. Then we have:
$$
G_\lamu^{m/n}(x_1,\ldots, x_t) = \sum_{\nu} x_1^{\nu_1} \ldots x_t^{\nu_t} \left(e_{\nu_1}^{m/n} \ldots e_{\nu_t}^{m/n}\right)_\mu^{\lambda}
$$ 
Because the operators $e_k^{m/n}$ all commute, we may change their order in the above product for any permutation $\sigma \in S(t)$, and so:
$$
G_\lamu^{m/n}(x_1,\ldots, x_t) = \sum_{\nu} x_1^{\nu_1} \ldots x_t^{\nu_t} \left(e_{\nu_{\sigma(1)}}^{m/n} \ldots e_{\nu_{\sigma(t)}}^{m/n}\right)_\mu^{\lambda} = 
$$ 
$$
= \sum_{\nu} x_{\sigma(1)}^{\nu_1} \ldots x_{\sigma(t)}^{\nu_t} \left(e_{\nu_1}^{m/n} \ldots e_{\nu_t}^{m/n}\right)_\mu^{\lambda} = G_\lamu^{m/n}(x_{\sigma(1)},\ldots, x_{\sigma(t)})
$$

\end{proof}

\subsection{} The original LLT polynomials were defined in \cite{LLT} as:
\begin{equation}
\label{eqn:llt}
\tG_\lamu(x_1,x_2,\ldots) = \sum_{\nu} x_1^{\nu_1} \ldots x_t^{\nu_t}  \sum_{T \in K_\lamu^\nu}  (-s)^{\high T}
\end{equation}
where $s$ is a formal parameter. Although the above coefficient looks different from \eqref{eqn:ribbon}, we will show that the two generating series are actually the same, up to constant multiple. \\

\begin{proposition}
\label{prop:ribbons}

Let $s = \sqrt{\frac tq}$. For any ribbon tableau $T$ of shape $\lamu$, we have: 
\begin{equation}
\label{eqn:ribbons}
\frac {\theta_m(T)}{(-s)^{\emph{ht } T}} = \gamma_{\lamu}
\end{equation}
where the constant $\gamma_\lamu$ only depends on the skew diagram, and not on the tableau.

\end{proposition}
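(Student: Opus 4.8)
The plan is to strip the statement down to a single combinatorial identity about $2n$-box regions. Since $\theta_m$ is multiplicative and $\high$ additive over the $n$-ribbons making up a tableau,
$$
\frac{\theta_m(T)}{(-s)^{\high T}} \ = \ \prod_{B}\frac{\theta_m(B)}{(-s)^{\high B}}
$$
depends only on the multiset of ribbons constituting $T$ — i.e.\ on the underlying ribbon tiling $\pi$ of $\lamu$ — not on how they are stacked. So the Proposition is equivalent to the assertion that $\Gamma(\pi):=\prod_{B\in\pi}\theta_m(B)(-s)^{-\high B}$ is the same for all $n$-ribbon tilings $\pi$ of $\lamu$, and then $\gamma_\lamu:=\Gamma(\pi)$. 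Standardizing a tableau changes neither its ribbons nor $\Gamma$, so it suffices to treat standard ribbon tableaux, i.e.\ maximal chains $\mu=\lambda^{(0)}\subset\cdots\subset\lambda^{(k)}=\lambda$ with every step an $n$-ribbon. Via the $n$-quotient these become standard fillings of a fixed tuple of skew shapes, and any two standard fillings of a fixed skew shape are joined by elementary transpositions; in ribbon terms each such transposition either leaves $\pi$ (and hence $\Gamma(\pi)$) unchanged, or it flips some $2n$-box sub-region $S=\lambda^{(p-1)}\backslash\lambda^{(p+1)}$ between its two tilings into a pair of $n$-ribbons. Thus everything comes down to the invariance of $\Gamma$ under such a flip.

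Before that I would record the single-ribbon factor in closed form. Listing the boxes of an $n$-ribbon $B$ as $\square_1,\dots,\square_n$ from northwest to southeast and setting $\sigma_i=q$ or $t$ according as the step $\square_i\to\square_{i+1}$ is horizontal or vertical, one has $\chi_j(B)=\chi_1(B)\,\sigma_1\cdots\sigma_{j-1}$ and $\high B=\#\{i:\sigma_i=t\}$, whence $\prod_{j=1}^n\chi_j(B)^{\lfloor mj/n\rfloor-\lfloor m(j-1)/n\rfloor}=\chi_1(B)^m\prod_{i=1}^{n-1}\sigma_i^{\,m-\lfloor mi/n\rfloor}$. Using $s^2=t/q$ to replace each vertical factor $t^{\,m-\lfloor mi/n\rfloor}$ occurring in $\theta_m(B)/(-s)^{\high B}$ by $q^{\,m-\lfloor mi/n\rfloor}s^{\,2(m-\lfloor mi/n\rfloor)-1}$ one arrives at
$$
\frac{\theta_m(B)}{(-s)^{\high B}} \ = \ q^{\,\sum_{i=1}^{n-1}(m-\lfloor mi/n\rfloor)}\cdot\chi_1(B)^{m}\prod_{i\,:\,\sigma_i=t}s^{\,2(m-\lfloor mi/n\rfloor)-1}.
$$
The key point here is that $\gcd(m,n)=1$ forces $\sum_{i=1}^{n-1}\lfloor mi/n\rfloor=\frac{(m-1)(n-1)}{2}$, so the $q$-prefactor equals $q^{(n-1)(m+1)/2}$, independent of $B$; it contributes the fixed value $q^{k(n-1)(m+1)/2}$ to $\Gamma(\pi)$ for every $\pi$.

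The remaining, and main, step is the flip invariance: if a $2n$-box region $S$ is retiled from $\{B_1,B_2\}$ to $\{B_1',B_2'\}$, then $\prod_i\theta_m(B_i)(-s)^{-\high B_i}=\prod_i\theta_m(B_i')(-s)^{-\high B_i'}$. I would first observe that $S$ has a fixed content multiset $(d_c)$, and that this already determines — via the telescoping recursion $h_c=h_{c-n}+(d_c-d_{c-1})$ for the number $h_c$ of ribbons whose northwest box has content $c$ — the unordered pair of content-intervals occupied by the two ribbons; so both tilings attach the exponents $m-\lfloor mi/n\rfloor$ to the same two intervals. What can genuinely differ between the two tilings is the pair of northwest boxes themselves (hence the factor $\chi_1(B_1)^m\chi_1(B_2)^m$) and the horizontal/vertical pattern of the steps (hence $\prod_{\sigma_i=t}s^{2(m-\lfloor mi/n\rfloor)-1}$). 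One then runs through the finitely many shapes of $2n$-box regions carrying two $n$-ribbon tilings and checks in each case that the change in $\chi_1(B_1)^m\chi_1(B_2)^m$ is cancelled exactly by the change in $\prod_{\sigma_i=t}s^{2(m-\lfloor mi/n\rfloor)-1}$ — a cancellation that hinges on $s^2=t/q$ and on the floor identities for $m-\lfloor mi/n\rfloor$, hence once more on $\gcd(m,n)=1$. This case analysis is where the real work lies. Granting it, $\Gamma$ is flip-invariant, hence constant over all $n$-ribbon tilings of $\lamu$, and the Proposition holds with $\gamma_\lamu=\Gamma(\pi)$.
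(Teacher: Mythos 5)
Your overall strategy is sound and close in spirit to the paper's: reduce the Proposition to the invariance of $\prod_B\theta_m(B)(-s)^{-\high B}$ under a local retiling of a connected $2n$-box region consisting of two $n$-ribbons, and then connect any two tableaux by such local moves. But the proof stops exactly where the real content begins. The invariance under the local move is the heart of the matter, and it is precisely the computation the paper carries out for its ``collapse'' move: there one shows $\high T'-\high T=2$ while $\theta_m(T')/\theta_m(T)=t/q$, the latter coming from the identity $r(i+1)+\cdots+r(n)-r(1)-\cdots-r(n-i)=\lceil mi/n\rceil-\lfloor mi/n\rfloor=1$ for $1\le i\le n-1$, where $r(j)=\lfloor \frac{mj}n\rfloor-\lfloor \frac{m(j-1)}n\rfloor$ (this is exactly where $\gcd(m,n)=1$ is used). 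You instead write that one ``runs through the finitely many shapes of $2n$-box regions'' and checks a cancellation that ``hinges on the floor identities,'' and then say ``granting it.'' That is the entire substance of the Proposition left unproved; your closed form for $\theta_m(B)/(-s)^{\high B}$ (which is correct, including the constant prefactor $q^{(n-1)(m+1)/2}$) only isolates what must be checked, it does not check it. So as written there is a genuine gap.

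On the other half, connectivity of the move graph, your route differs from the paper's: you standardize the tableau, pass through the $n$-quotient bijection, and invoke the classical fact that standard fillings of a fixed tuple of skew shapes are connected by elementary transpositions, each of which either preserves the underlying tiling or retiles the connected region $\lambda^{(p+1)}\backslash\lambda^{(p-1)}$. The paper instead proves directly that its collapse graph is connected, by showing every tableau can be moved to a ``minimal tableau'' whose ribbons trace the outer boundary of $\lamu$. Your quotient-based argument is a legitimate alternative and arguably cleaner if you are willing to cite the quotient bijection and the transposition-connectivity of standard fillings, though you should also justify the standardization step (every ribbon tableau refines to a standard one with the same underlying tiling) rather than assert it. But none of this rescues the proposal: until the flip/collapse computation is actually performed, the key identity behind \eqref{eqn:ribbons} has not been established.
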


\text{} \\
We will prove the above Proposition in the next Subsection, where we will also describe the constant $\gamma_\lamu$. Let us note that the above immediately implies that:
\begin{equation}
\label{eqn:equality}
G_\lamu^{m/n}(x_1,x_2,\ldots) = \gamma_\lamu \tG_\lamu (x_1,x_2,\ldots) \Big |_{s\rightarrow \sqrt{\frac tq}}
\end{equation}

\subsection{} Let us fix $\lamu$, and consider the graph whose vertices are all ribbon tableaux. We draw an oriented edge from one vertex to another if the former can be obtained from the latter by a \textbf{collapse} of one constituent ribbon into another:

\begin{picture}(300,150)(-5,10)
\label{collapse}

\put(142,95){\text{collapse}}
\put(135,90){\vector(1,0){50}}

\put(30,145){\line(0,-1){15}}\put(45,145){\line(0,-1){15}}\put(60,145){\line(0,-1){15}}
\put(60,130){\line(1,0){15}}\put(60,115){\line(1,0){15}}\put(75,115){\line(0,-1){15}}
\put(90,115){\line(0,-1){15}}\put(90,100){\line(1,0){15}}\put(90,85){\line(1,0){15}}
\put(90,70){\line(1,0){15}}\put(105,70){\line(0,-1){15}}\put(120,70){\line(0,-1){15}}

\put(45,115){\line(1,0){15}}\put(45,100){\line(1,0){15}}\put(60,100){\line(0,-1){15}}
\put(75,100){\line(0,-1){15}}\put(75,85){\line(1,0){15}}\put(75,70){\line(1,0){15}}
\put(75,55){\line(1,0){15}}\put(90,55){\line(0,-1){15}}\put(105,55){\line(0,-1){15}}
\put(120,55){\line(0,-1){15}}\put(135,55){\line(0,-1){15}}\put(135,40){\line(1,0){15}}

\put(200,145){\line(0,-1){15}}\put(215,145){\line(0,-1){15}}\put(230,130){\line(-1,0){15}}
\put(230,130){\line(1,0){15}}\put(230,115){\line(1,0){15}}\put(245,115){\line(0,-1){15}}
\put(260,115){\line(0,-1){15}}\put(260,100){\line(1,0){15}}\put(260,85){\line(1,0){15}}
\put(260,70){\line(1,0){15}}\put(275,70){\line(0,-1){15}}\put(290,70){\line(0,-1){15}}

\put(215,115){\line(1,0){15}}\put(215,100){\line(1,0){15}}\put(230,100){\line(0,-1){15}}
\put(245,100){\line(0,-1){15}}\put(245,85){\line(1,0){15}}\put(245,70){\line(1,0){15}}
\put(245,55){\line(1,0){15}}\put(260,55){\line(0,-1){15}}\put(275,55){\line(0,-1){15}}
\put(290,55){\line(1,0){15}}\put(305,55){\line(0,-1){15}}\put(305,40){\line(1,0){15}}

\linethickness{0.8mm}

\put(15,145){\line(0,-1){15}}\put(15,145){\line(1,0){60}}\put(75,145){\line(0,-1){30}}
\put(75,115){\line(1,0){30}}\put(105,115){\line(0,-1){45}}\put(105,70){\line(1,0){30}}
\put(135,70){\line(0,-1){15}}\put(135,55){\line(-1,0){45}}\put(90,55){\line(0,1){45}}
\put(90,100){\line(-1,0){30}}\put(60,100){\line(0,1){30}}\put(60,130){\line(-1,0){45}}

\put(45,130){\line(0,-1){45}}\put(45,85){\line(1,0){30}}\put(75,85){\line(0,-1){45}}
\put(75,40){\line(1,0){60}}\put(135,40){\line(0,-1){15}}\put(135,25){\line(1,0){15}}
\put(150,25){\line(0,1){30}}\put(150,55){\line(-1,0){15}}

\put(185,145){\line(0,-1){15}}\put(185,145){\line(1,0){60}}\put(245,145){\line(0,-1){30}}
\put(245,115){\line(1,0){30}}\put(275,115){\line(0,-1){45}}\put(275,70){\line(1,0){30}}
\put(305,70){\line(0,-1){15}}\put(290,55){\line(-1,0){30}}\put(260,55){\line(0,1){45}}
\put(260,100){\line(-1,0){30}}\put(230,100){\line(0,1){45}}\put(215,130){\line(-1,0){30}}

\put(215,130){\line(0,-1){45}}\put(215,85){\line(1,0){30}}\put(245,85){\line(0,-1){45}}
\put(245,40){\line(1,0){60}}\put(305,40){\line(0,-1){15}}\put(305,25){\line(1,0){15}}
\put(320,25){\line(0,1){30}}\put(320,55){\line(-1,0){15}}\put(290,55){\line(0,-1){15}}

\put(0,135){$B_1$}
\put(30,120){$B_2$}
\put(50,135){$i$}

\put(172,135){$B$}
\put(248,135){$B'$}
\put(233,135){$X$}

\put(150,5){\text{Figure 4}}

\end{picture}

%\subsection{} Let us understand the quantity $\theta_m(B)$ associated to any $n-$ribbon $B$. The shape of the ribbon is determined by the vector $\e = \e(B)$ given by:
%$$
%\e = (\e_1,...,\e_{n-1}) \in \{0,1\}^{n-1}
%$$
%where $\e_i$ is 1 or 0, depending on whether the $(i+1)$-st box of $B$ lies below or to the right of the $i$-th box. The weights of the boxes in the ribbon are determined by $\e$, since:
%$$
%\chi_j(B) = \chi_1(B) q^{j-1} \left(\frac tq \right)^{\e_1+ \ldots + \e_{j-1}}
%$$
%The height of the ribbon is $|\e| := \e_1+\ldots +\e_{n-1}$. If we denote $\chi(B) = \chi_1(B)$ for the weight of the first box of the ribbon, then we conclude that:
%$$
%\theta_m(B) = (-1)^{|\e|} \chi(B)^m \prod_{j=1}^n \left[ q^{j-1} \left(\frac tq \right)^{\e_1+ \ldots + \e_{j-1}} \right]^{\left(\left \lfloor \frac {mj}n \right \rfloor - \left \lfloor \frac {m(j-1)}n \right \rfloor\right)} = 
%$$
%$$
%= (-1)^{|\e|} \chi(B)^m q^{\frac {(m+1)(n-1)}2} \left(\frac tq \right)^{\sum_{j=1}^{n-1} \e_j \left \lceil \frac {m(n-j)}n \right \rceil} 
%$$
%Therefore, the generating series of ribbon tableaux becomes:
%$$
%G_\lamu^{m/n} = \sum_{\nu} x_1^{\nu_1} \ldots x_t^{\nu_t} Q^{|\nu|} \sum_{(B_1,\ldots, B_{|\nu|}) \in K_\lamu^\nu} \prod_{i=1}^{\nu} (-1)^{|\e(B_i)|} \chi(B_i)^m \left(\frac tq \right)^{\sum_{j=1}^{n-1} \e(B_i)_j \left \lceil \frac {m(n-j)}n \right \rceil} 
%$$
%where we write $Q = q^{\frac {(m+1)(n-1)}2}$. \\

\text{} 

\begin{proof} \textbf{of Proposition \ref{prop:ribbons}:} The above will be called the ``collapse graph" of shape $\lamu$. In order to prove the Proposition, we need to prove two statements: that a collapse $T\rightarrow T'$ does not change the quantity in the LHS of \eqref{eqn:ribbons}, and that the collapse graph is connected. For the first statement, it is easy to see that:
$$
\high T' - \high T = 2
$$
Let us denote by $B_1$ the ribbon above $B_2$ that are being collapsed in $T$, and assume that their first common edge is adjacent to the $i-$th box of $B_1$, as in Figure 4. Then we have:
$$
\frac {\theta_m(T')}{\theta_m(T)} = \frac {(-1)^{\high T'}(\chi_1qt^{-1})^{r(1)}...(\chi_{n-i}qt^{-1})^{r(n-i)}\chi_{1}^{r(i+1)}... \chi_{n-i}^{r(n)}}{(-1)^{\high T} \chi_1^{r(1)}...\chi_{n-i}^{r(n-i)}(\chi_{1}qt^{-1})^{r(i+1)}... (\chi_{n-i}qt^{-1})^{r(n)}} = \frac tq
$$
where $\chi_1,...,\chi_n$ denote the contents of the boxes of $B_2$ before the collapse, and:
$$
r(j) = \left \lfloor \frac {mj}n \right \rfloor - \left \lfloor \frac {m(j-1)}n \right \rfloor \Longrightarrow r(i+1)+...+r(n)-r(1)-..-r(n-i) = 1 
$$
for all $i\in \{1,...,n-1\}$. Since $s^2 = \frac tq$, this implies that the LHS of \eqref{eqn:ribbons} does not change upon a collapse. We still need to prove that the collapse graph is connected, which follows from the claim: \\
 
\begin{claim}
 	
Any $n-$ribbon tableau of any shape $\lamu$ is connected in the collapse graph to the \textbf{minimal tableau}.
 	
\end{claim}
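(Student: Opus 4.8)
The plan is to establish the Claim by exhibiting a well-founded quantity on ribbon tableaux of a fixed shape $\lamu$ which strictly decreases under collapses, and whose minimum value is attained only at the minimal tableau. The natural candidate is the height $\high T$ (equivalently, the spin), which we already computed drops by exactly $2$ under each collapse. So it suffices to show that any ribbon tableau $T$ which is not the minimal tableau admits at least one collapse, i.e. contains some adjacent pair of ribbons $B_1$ (above) and $B_2$ (below) whose first common edge is vertical in the sense required by Figure 4, so that $B_1$ can be slid down onto $B_2$.

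First I would recall/define the \textbf{minimal tableau} of shape $\lamu$: by Lemma \ref{lem:strip}, each vertical strip in a ribbon tableau decomposes canonically by peeling off outer ribbons, so the minimal tableau is the one in which every single ribbon has height $1$ (equivalently, every ribbon is as "flat" as possible). Such a tableau has $\high T = 0$, which is the absolute minimum, so it is clearly a sink of the collapse graph; moreover it is unique, since once every ribbon is forced to be horizontal-ish of height $1$, the greedy peeling procedure of Lemma \ref{lem:strip} reconstructs the tableau uniquely row by row. Then I would argue the converse: if $\high T > 0$, some constituent ribbon $B$ has height $\geq 2$, so it has a "bend", and tracing the boundary of $\lamu$ just above this bend one locates an adjacent ribbon $B_1$ sitting on top of a ribbon $B_2$ (possibly $B_2 = B$ itself after relabeling within its vertical strip, or the ribbon immediately northwest of $B$) meeting along a vertical edge — precisely the configuration on the left of Figure 4. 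This is exactly the bubbling move of Subsection \ref{sub:game} run in reverse. Performing the collapse $T \to T'$ decreases $\high$ by $2$ while keeping the shape $\lamu$ fixed (the collapse rearranges which boxes belong to which ribbon, not which boxes are occupied), so by induction on $\high T$ we reach the minimal tableau in finitely many steps, proving connectivity.

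The main obstacle I anticipate is the combinatorial bookkeeping in the assertion "if $\high T > 0$ then a collapse is available": one must be careful that a ribbon of height $\geq 2$ in a \emph{vertical strip} of ribbons really does produce a legal collapse rather than being blocked by the "no two ribbons next to each other" condition defining a vertical strip, or by interference with ribbons in strips above or below it in the tableau. The cleanest way around this is to reduce to a single vertical strip: collapses within one vertical strip of $n$-ribbons suffice, because the decomposition of $\lamu$ into vertical strips (one per factor $e_{\nu_i}^{m/n}$) is an invariant we need not disturb. Within a single vertical strip $\{B_1,\dots,B_k\}$, take the outer ribbon $B_\out$ of Lemma \ref{lem:strip}; if it has height $\geq 2$ then its first bend gives a vertical common edge with the next ribbon inward, yielding a collapse, and if $B_\out$ already has height $1$ we recurse on the strip with $B_\out$ removed. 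Thus the whole argument is really just the observation that the "game" of Subsection \ref{sub:game}, which Lemma \ref{lem:game} shows always terminates for a coverable diagram, can be run backwards as a sequence of collapses, and its fully-bubbled-down terminal state is the minimal tableau. I would close by noting that combining this connectivity with the collapse-invariance of $\theta_m(T)/(-s)^{\high T}$ established above proves Proposition \ref{prop:ribbons}, with $\gamma_\lamu$ equal to the common value $\theta_m(T_{\min})/(-s)^{0} = \theta_m(T_{\min})$ computed on the minimal tableau.
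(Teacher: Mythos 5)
Your proposal takes a different route from the paper (a monovariant argument on $\high T$ rather than the paper's induction on $|\lamu|$), but it has a genuine gap at its central step. The anchor of the argument --- that the minimal tableau is the tableau with $\high T = 0$ in which every ribbon has height $1$ --- is simply false: the paper's minimal tableau is built from outer chains tracing the outer boundary of $\lamu$, and whenever that boundary has vertical steps its ribbons are forced to bend. For instance, if $\lamu$ is a single column of $n$ boxes, the unique (hence minimal) tableau is one ribbon of height $n-1$. What your scheme actually requires is that the minimal tableau be the \emph{only} tableau of shape $\lamu$ admitting no height-decreasing move, and your argument for this rests on the false premise. The local step is also false as stated: ``some ribbon has height $\geq 2$, hence a bend, hence an adjacent pair in the Figure 4 configuration'' fails already when $\lamu$ is itself a single bent $n$-ribbon, and in general a bend need not be matched by a neighbouring ribbon in the required relative position. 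Ruling out non-minimal tableaux that admit no move is precisely the hard content of the Claim, and it is what the paper supplies by its induction on the size of the shape: one drives the ribbon containing the northwestern-most box onto the outer boundary by applying the induction hypothesis to the auxiliary sub-tableau $T_0$ and using inverse collapses, which simultaneously proves that the minimal tableau is well defined --- a point you also take for granted, although the paper notes that the outer chain ending at the southeastern-most box is not obvious.

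Two further problems. The proposed reduction to a single vertical strip is incoherent: within one vertical strip no two ribbons are next to each other, and a collapse genuinely changes which strip (and hence which weight $\nu$) a box belongs to; the connectivity needed for Proposition \ref{prop:ribbons} is across tableaux of all weights, so you cannot declare the strip decomposition ``an invariant we need not disturb.'' Relatedly, in the paper's convention a collapse \emph{raises} the height, $\high T' - \high T = 2$, not lowers it; this is harmless for undirected connectivity, but it signals that your identification of the desired moves with ``the bubbling game of Subsection \ref{sub:game} run backwards'' is unsubstantiated: that game slides entire rows of boxes diagonally and is a statement about evaluations of shuffle elements, not a move on ribbon tableaux, so Lemma \ref{lem:game} produces no sequence of collapses. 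To salvage the monovariant approach you would still have to prove, from scratch, both the uniqueness of the height-minimizing tableau and the availability of a move at every other tableau; neither follows from Lemma \ref{lem:strip} or Lemma \ref{lem:game}.
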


\text{} \\
We define the minimal tableau by starting at the northwestern most box $\square$ of the skew diagram, and tracing the \textbf{outer chain} of $n-$ribbons along the outer boundary of the partition. When we arrive at the southeastern most box $\blacksquare$ with the outer chain, we remove the chain and obtain a skew diagram of smaller size, to which we repeat the construction. Note that it is not clear that the minimal tableau is well-defined (a priori, the outer chain may not end precisely at $\blacksquare$), but this will be proved together with the claim by induction on the size $\lamu$ of a skew diagram which admits any $n-$ribbon covering.
 
\text{} \\
We will prove that any $n-$ribbon tableau $T$ is connected to one where the ribbon $B$ containing the northwestern most box $\square$ lies completely along the outer boundary of $\lamu$. Then it will be clear that the argument can be continued to form the outer chain, and the claim will follow by induction. Let us construct a sub-tableau $T_0 \subset T$ as the minimal collection of ribbons that includes $B$ and such that if a ribbon $B_0\in T$ touches an outer boundary \footnote{The outer boundary of a ribbon consists of those boundary edges above and to the right of it} of a ribbon in $T_0$, then $B_0 \in T_0$. It is easy to see that $T_0 \backslash B$ is in the shape of a skew diagram $\nu \subset \lamu$, so by the induction hypothesis one can transform it (via collapses and their inverses) into the minimal tableau of shape $\nu$. By the very definition of the minimal tableau, its northwestern most ribbon $B_0$ can be successively collapsed until it touches the inner boundary of $\nu$. But then $B$ and $B_0$ are precisely in the position on the right of Figure 4, so an inverse collapse allows $B$ to include the box marked $X$ therein. Repeating this procedure, we will ensure that $B$ will lie completely along the outer boundary of $\lamu$. 

% The property we are using here is that $\theta$ is given as $\prod \chi_j^{f_j}$ where the function $f_j$ is defined for $j$ mod $n$ and satisfies $f_r+...+f_n - f_1-...-f_{n-r+1} = 1$ for all $1<r<n$.

%\begin{picture}(200,100)(-10,55)
%\label{minimal}

%\put(260,120){$T^{\min}_{\lamu}$}
%\put(260,105){$\text{for }n=4$}
%\put(260,90){$\mu = (5,2,2,1)$}
%\put(260,75){$\lambda = (8,8,7,4,4,3)$}

%\put(125,145){\line(0,-1){30}}\put(140,145){\line(0,-1){45}}\put(155,85){\line(0,-1){15}}
%\put(170,100){\line(0,-1){30}}\put(200,100){\line(0,-1){45}}\put(215,85){\line(0,-1){30}}
%\put(140,130){\line(1,0){15}}\put(125,115){\line(1,0){45}}\put(155,100){\line(1,0){15}}
%\put(140,85){\line(1,0){75}}\put(185,70){\line(1,0){15}}

%\linethickness{0.8mm}

%\put(110,145){\line(1,0){45}}\put(155,145){\line(0,-1){60}}\put(155,130){\line(1,0){15}}
%\put(170,130){\line(0,-1){30}}\put(170,100){\line(1,0){45}}\put(215,100){\line(0,-1){15}}
%\put(215,85){\line(1,0){15}}\put(230,85){\line(0,-1){30}}\put(230,55){\line(-1,0){45}}
%\put(185,55){\line(0,1){45}}\put(230,70){\line(-1,0){30}}\put(230,70){\line(-1,0){30}}
%\put(200,70){\line(0,1){15}}\put(200,85){\line(-1,0){45}}\put(185,70){\line(-1,0){45}}
%\put(140,70){\line(0,1){30}}\put(125,100){\line(1,0){30}}\put(125,100){\line(0,1){15}}
%\put(125,115){\line(-1,0){15}}\put(110,115){\line(0,1){30}}\put(110,130){\line(1,0){30}}
%\put(140,130){\line(0,-1){15}}\put(140,115){\line(1,0){15}}

%\end{picture}

\end{proof}

\subsection{} We will still write $s = \sqrt {t/q}$. The algebra $\CA$ of Subsection \ref{sub:elliptic} is the positive half of its Drinfeld double:
$$
D\CA = \langle p_{v} \rangle_{v \in \BZ^2 \backslash (0,0)}
$$
The algebra $D\CA$ has twice as many generators as $\CA$, and relations \eqref{eqn:rel1} and \eqref{eqn:rel2} need to be slightly amended:
\begin{equation}
\label{eqn:heisenberg}
[p_{kn,km}, p_{ln,lm}] = \frac {k\delta_{k+l}^0 \cdot (1-q^{-k})}{(q^k-t^k)(1-t^{-k})} \cdot (s^{-kn} - s^{kn})
\end{equation}
for any $k,l>0$ and any coprime $m,n$, and: 
$$
[p_{v}, p_{v'}] = s^{\alpha(v,v')} \theta_{v+v'}
$$
in the notation of Subsection \ref{sub:elliptic}. In the above, $\alpha(v,v')$ denotes a certain integer that needs to be added to the formula when the lattice points $v$ and $v'$ lie on opposite sides of the vertical axis. We will not review the exact definition here, as we will not need it, but the interested reader may find it in \cite{Nsurv}. It is also shown in \loccit \ that the whole double algebra $D\CA$ acts on $\Lambda$, where:
\begin{equation}
\label{eqn:adjoint}
p_{-n,m} = - p_{n,m}^\dagger \cdot \left( \frac sq \right)^n
\end{equation}
for all $n>0$. The adjoint is taken with respect to the inner product \eqref{eqn:mac}. \\

%\subsection{} Let us consider the quantized infinite dimensional Heisenberg algebra:
%\begin{equation}
%\label{eqn:heis}
%\uu = \left \langle ..., \pi_{-2},\pi_{-1},\pi_1,\pi_2,... \right \rangle \Big /_{\left[\pi_k, \pi_l \right] - k\delta_{k+l}^0 \frac {v^{kn} - v^{-kn}}{v^k-v^{-k}}}
%\end{equation}
%Then relation \eqref{eqn:heisenberg} says that for any fixed slope $\frac mn$, the algebra $D\CA$ contains a copy of $\uu$ via the embedding:
%\begin{equation}
%\label{eqn:upsilon}
%\uu \hookrightarrow D\CA \qquad \qquad \pi_k \longrightarrow p_k^{m/n}
%\end{equation}
%where we set $v = \sqrt{\frac qt}$ and:
%$$
%p_k^{m/n} = p_{kn,km} \qquad \qquad p_{-k}^{m/n} = p_{-kn,-km} \cdot \left(-v^{-k} \frac {1-t^k}{1-q^{-k}}\right)
%$$
%for any $k>0$. Instead of working with the generators $\pi_k$ of $\uu$, which are like power-sum functions, we can introduce generators $\e_k$ which behave like elementary symmetric functions. By this we mean that:
%$$
%\e_{\pm 1} = \pi_{\pm 1}, \qquad \e_{\pm 2} = \frac {\pi_{\pm 1}^2 - \pi_{\pm 2}}2, \qquad \e_{\pm 3} = \frac {\pi_{\pm 1}^3 - 3\pi_{\pm 2}\pi_{\pm 1}+2\pi_{\pm 3}}6 \qquad \text{etc}
%$$
%Let us denote the images of these generators under the map \eqref{eqn:upsilon} by $e_k^{m/n} \in D\CA$. When $k>0$, the formulas by which these act on $\Lambda$ were given in Theorem \ref{thm:stab}. We will now give the analogue of this result for $k<0$. \\

\subsection{} By analogy with our notation $p_k^{m/n} = p_{kn,km}$, we will write:
$$ 
p_{-k}^{m/n} = p_{-kn,-km} \cdot \left(-s^{k} \frac {1-t^k}{1-q^{-k}}\right)
$$
By \eqref{eqn:heisenberg}, we have a quantum Heisenberg type relation:
$$
\left[ p^{m/n}_k, p^{m/n}_l \right] =  k\delta_{k+l}^0 \cdot \frac {s^{kn} - s^{-kn}}{s^k-s^{-k}}, \qquad \quad \forall k,l\in \BZ
$$
If we think of $p_{-k}^{m/n}$ as power sum functions, we define the operators $e_{-k}^{m/n}$ to be the corresponding elementary symmetric functions. The analogue of Theorem \ref{thm:stab} is: \\

\begin{theorem}
\label{thm:stab2}

For any $(m,n) \in \BZ \times \BN$ and any positive integer $k$, we have:
\begin{equation}
\label{eqn:pieri2}
e_{-k}^{m/n} \cdot s^{m/n}_{\lambda} = \frac {(-1)^{kn}}{(qt)^{\frac {k(n-1)}2}} \sum s^{m/n}_\mu  (-1)^{\emph{width}} \prod_{i=1}^k\prod_{j=1}^n \chi_{j}(R_i)^{\left \lfloor \frac {-mj}n \right \rfloor - \left \lfloor \frac {-m(j-1)}n \right \rfloor} 
\end{equation}
where the sum goes over all horizontal $k-$strips of $n-$ribbons of shape $\lamu$, and the remaining notations are as in Theorem \ref{thm:stab}.

\end{theorem}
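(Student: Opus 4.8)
Our plan is to deduce Theorem~\ref{thm:stab2} from Theorem~\ref{thm:stab}, applied at the slope $-m/n$, by transposing with respect to the Macdonald pairing $\langle\cdot,\cdot\rangle_{q,t}$ of \eqref{eqn:mac}. The first ingredient is \emph{the adjoint of $e_{-k}^{m/n}$}. Relation \eqref{eqn:adjoint} expresses every negative generator $p_{-N,M}$ with $N>0$ as $-(s/q)^{N}$ times the $\langle\cdot,\cdot\rangle_{q,t}$-adjoint of $p_{N,M}$; taking $(N,M)=(kn,km)$ and undoing the normalization $p_{-k}^{m/n}=p_{-kn,-km}\cdot\big({-}s^{k}\tfrac{1-t^{k}}{1-q^{-k}}\big)$ identifies $p_{-k}^{m/n}$, up to an explicit scalar $c_{k}\in\BK$, with the $\langle\cdot,\cdot\rangle_{q,t}$-adjoint of the raising generator $p_{k}^{-m/n}$ --- which already explains why $-m$ rather than $m$ occurs in \eqref{eqn:pieri2}. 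Since the $p_{j}^{-m/n}$ mutually commute by \eqref{eqn:rel1}, Newton's identities promote this to an identity $e_{-k}^{m/n}=\big(A_{k}\big)^{\dagger}$, where $A_{k}\in\CA_{-m/n}$ is the image under the $(-m/n)$-plethysm of an explicit element of $\Lambda$. Because the scalars $c_{j}=-\big((s/q)^{n}sq\big)^{j}\,\tfrac{1-t^{j}}{1-q^{j}}$ are not of the form $c^{j}$, this element is not literally $e_{k}$ but an explicit $q,t$-deformation of it --- concretely, the plethystically modified $h_{k}\!\left[\tfrac{1-t}{1-q}X\right]$, i.e.\ $e_{k}$ after the standard $\omega$-type twist; the discrepancy is monomial. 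Equivalently, in the basis of renormalized Macdonald polynomials \eqref{eqn:renormalization} the matrix of $e_{-k}^{m/n}$ is the transpose of the matrix of $A_{k}$, conjugated by the diagonal Gram matrix of the $M_\lambda$; in particular $e_{-k}^{m/n}$ sends $M_\lambda$ to a combination of $M_\mu$ with $\mu\subseteq\lambda$, matching the shape $\lamu$ in \eqref{eqn:pieri2}.

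The second ingredient is \emph{the dual of the stable basis}: I would use that the basis of $\Lambda$ dual to $\{s_\lambda^{m/n}\}$ under $\langle\cdot,\cdot\rangle_{q,t}$ is, up to an explicit $\lambda$-dependent scalar, the stable basis $\{s_\lambda^{-m/n}\}$ at the opposite slope. This is part of the Maulik--Okounkov package (stable bases of opposite chambers are dual), and can also be verified directly: the defining degree conditions \eqref{eqn:lim1}--\eqref{eqn:lim2} go over to the analogous conditions for the dual basis under $m\mapsto -m$ together with the rescaling $M_\mu\mapsto M_\mu/\langle M_\mu,M_\mu\rangle_{q,t}$, while the normalization \eqref{eqn:normal} is exactly compensated by the diagonal Gram entries $\langle M_\mu, M_\mu\rangle_{q,t}$. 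Granting this, the structure constant of $e_{-k}^{m/n}$ carrying $s_\lambda^{m/n}$ to $s_\mu^{m/n}$ becomes, via the adjunction above, the structure constant of $A_{k}$ carrying $s_\mu^{-m/n}$ to $s_\lambda^{-m/n}$, up to the scalars already recorded.

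The third ingredient is \emph{the action of $A_{k}$ on $\{s_\mu^{-m/n}\}$}. Since $A_{k}$ is $e_{k}$ after the $\omega$-twist, fed through the $(-m/n)$-plethysm, its action is computed by Theorem~\ref{thm:stab} in the form given by its footnote --- the complete-symmetric-function version, whose structure constants are sums over \emph{horizontal} $k$-strips of $n$-ribbons --- once one checks that the plethystic factor $\tfrac{1-t}{1-q}$ is absorbed into the built-in plethystic modification of the $(-m/n)$-presentation (the $\alpha_{k}$ of Subsection~\ref{sub:elliptic}) and does not spoil the collapse of the LLT-type sum to a single $k$-strip. This yields a sum over horizontal $k$-strips of $n$-ribbons of shape $\lamu$, with weight $(-1)^{\emph{width}}\prod_{i,j}\chi_j(R_i)^{\lfloor mj/n\rfloor-\lfloor m(j-1)/n\rfloor}$ evaluated at slope $-m/n$ (that is, with $m$ replaced by $-m$); and the leftover monomial scalars --- from \eqref{eqn:adjoint}, from the $c_{j}$, from the Gram entries, and from \eqref{eqn:normal} relating $\{s_\mu^{-m/n}\}$ to its dual --- telescope to exactly the prefactor $\tfrac{(-1)^{kn}}{(qt)^{k(n-1)/2}}$ of \eqref{eqn:pieri2}. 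The four-bullet characterization of Subsection~\ref{sub:e} and the constant $\gamma_\lamu$ of Proposition~\ref{prop:ribbons} are the natural tools for this bookkeeping.

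I expect the main obstacle to be the precise identification of the Macdonald-dual of the stable basis --- that it is the opposite-slope stable basis, with the exact $\lambda$-dependent normalization constant --- since the stable basis is defined only implicitly by the degree conditions \eqref{eqn:lim1}--\eqref{eqn:lim2} together with \eqref{eqn:normal}, and matching these against the behaviour of $\langle\cdot,\cdot\rangle_{q,t}$ and against the rescaling $M_\mu\mapsto M_\mu/\langle M_\mu,M_\mu\rangle_{q,t}$ must be carried out term by term. A self-contained but more computational alternative would bypass the duality entirely: realize $e_{-k}^{m/n}$ by an explicit ``co-shuffle'' (skewing) kernel --- the analogue for the lowering part of $D\CA$ of the action formula \eqref{eqn:shuf} --- and then rerun the degree estimates of Proposition~\ref{prop:main} verbatim, which would produce \eqref{eqn:pieri2} directly, prefactor included.
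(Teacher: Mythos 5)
Your main route hinges on the second ingredient, and that is where there is a genuine gap: the claim that the $\langle\cdot,\cdot\rangle_{q,t}$-dual of $\{s_\lambda^{m/n}\}$ is, up to scalars, $\{s_\lambda^{-m/n}\}$ cannot hold as stated. Both families are defined in the paper (for every slope) as unitriangular expansions $s_\lambda^{\pm m/n}=\sum_{\mu\unlhd\lambda}M_\mu c_\lambda^{\mu,\pm}$ with respect to the \emph{same} dominance order, while the $M_\mu$ are orthogonal for $\langle\cdot,\cdot\rangle_{q,t}$; hence the dual of a slope-$m/n$ stable basis is triangular in the \emph{opposite} direction. Concretely, taking $\mu$ minimal in dominance order one gets $\langle s_\lambda^{m/n}, s_\mu^{-m/n}\rangle_{q,t}=c_\lambda^{\mu,+}c_\mu^{\mu,-}\|M_\mu\|_{q,t}^2$, which is generically nonzero for $\lambda\rhd\mu$, so no $\lambda$-dependent rescaling can make the two families dual. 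The Maulik--Okounkov duality you appeal to pairs the slope-$s$ stable basis of one chamber with the slope-$(-s)$ stable basis of the \emph{opposite} chamber (a different object in this model, related to transposed diagrams and $q\leftrightarrow t$), not two slopes in the same chamber; nothing of this sort is available, or needed, in the paper. Consequently the reduction of the structure constants of $e_{-k}^{m/n}$ on $\{s_\lambda^{m/n}\}$ to those of an operator acting on $\{s_\mu^{-m/n}\}$ does not go through, and the ``telescoping'' of scalars to the prefactor $(-1)^{kn}(qt)^{-k(n-1)/2}$ has no basis to rest on. Note also that the target basis in \eqref{eqn:pieri2} is still $s^{m/n}_\mu$, at the same slope: the paper never changes the stable basis.

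What the paper actually does is essentially your last-sentence ``computational alternative,'' and that is the route you should take as the main argument: by \eqref{eqn:adjoint} one has $e_{-k}^{m/n}\sim (e_k^{-m/n})^{\dagger}$ up to explicit monomial factors (the sign flip of the power sums accounting for the passage from elementary to complete symmetric functions, hence horizontal rather than vertical strips), so the matrix coefficients of $e_{-k}^{m/n}$ in the orthogonal basis $M_\lambda$ are read off from \eqref{eqn:shuf} with the roles of $M_\mu$ and $M_\lambda$ switched, up to a product of linear factors and an overall constant. One then reruns the degree estimates of Proposition \ref{prop:main} (now for $E$-type shuffle elements at slope $-m/n$ evaluated on the flipped skew shapes) and the triangularity-plus-integrality argument from the proof of Theorem \ref{thm:stab}, all with respect to the slope-$m/n$ stable basis, to extract the coefficients in \eqref{eqn:pieri2}, prefactor included. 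Your first ingredient (the scalar bookkeeping relating $p_{-k}^{m/n}$ to $(p_k^{-m/n})^{\dagger}$ and the resulting plethystic twist) is in the right spirit and is the part of your write-up worth keeping; the duality of stable bases should be dropped entirely.
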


\textbf{} \\
A horizontal strip of ribbons is defined just like a vertical strip of ribbons in Subsection \ref{sub:ribbon}. The proof of the above Theorem goes through almost word by word as that of Theorem \ref{thm:stab}, and so we will not repeat it. The starting point is the same formula \eqref{eqn:shuf}, which tells us how the operators $e_k^{m/n}$ act in the basis of renormalized Macdonald polynomials. By \eqref{eqn:adjoint}, we have:
$$
e_{-k}^{m/n} \sim \left( e_k^{-m/n} \right)^\dagger
$$
up to an overall constant, and so the matrix coefficients of the LHS in the basis of Macdonald polynomials are obtained from \eqref{eqn:shuf} by switching $M_\mu$ and $M_\lambda$ (up to a product of linear factors and an overall constant). This accounts for two differences between the present formula \eqref{eqn:pieri2} and the $m/n$ Pieri rule \eqref{eqn:pieri}: \\

\begin{itemize} 

\item we have $-m$ in the exponents of Theorem \ref{thm:stab2}, as opposed from the exponent $m$ that appeared in Theorem \ref{thm:stab} \\

\item we consider horizontal strips of ribbons (and their width) instead of vertical strips of ribbons (and their height). The reason for this is the $-$ sign in \eqref{eqn:adjoint}, which tells us that we need to flip the sign of power sum functions as we go from positive to negative. This has the effect of replacing elementary symmetric by complete symmetric functions. \\

\end{itemize}

\section{Appendix}
\label{sec:app}

\begin{proof} \textbf{of Proposition \ref{prop:well}:} As we read from top to bottom, the rows of the Young diagram $\lamu$ are horizontal strips $D_1,...,D_l$ of boxes, where we write:
	$$
	D_i = \{(x_i,y_i),(x_i+1,y_i),...,(x_i'-1,y_i),(x_i',y_i)\}	
	$$
	Thus, let us consider the following rational functions in $l$ variables:
	$$
	R'(a_1,...,a_l) = R(...,a_i q^{x_i},...,a_i q^{x_i'},....) = \frac {r(a_1 q^{x_1},...,a_1 q^{x_1'},....,a_l q^{x_l},...,a_l q^{x_{l'}})}{\prod^{(i,j)\neq (i',j'), 1\leq i,i'\leq l}_{x_i \leq j \leq x_i', \ x_{i'} \leq j' \leq x'_{i'}} (a_i t q^j- a_{i'} q^{j'+1})}
	$$
for certain indeterminates $a_1,...,a_l$, where we use formula \eqref{eqn:shuf} in the second equality. Since the Laurent polynomial $r$ satisfies the wheel conditions, the numerator vanishes when $a_i t q^j = a_{i'}q^{j'+1}$ for $x_{i'} \leq j' < x_{i'}'$ and when $a_i q^{j+1} = a_{i'} t q^{j'}$ for $x_{i'} < j' \leq x_{i'}'$, for all $1\leq i<i' \leq l$. This means that the numerator will be divisible by linear factors involving the $a_i$, which means the above formula can be simplified to:
	$$
	R'(a_1,...,a_l) = \frac {r'(a_1,...,a_l)}{\prod^{1\leq i<i' \leq l}_{x_i\leq j \leq x_i'} (a_i t q^j- a_{i'} q^{x'_{i'}+1}) (a_i q^{j+1}- a_{i'} t q^{x_{i'}})}
	$$
	for some Laurent polynomial $r'$. To obtain the value $R(\lamu)$, we need to set $a_i = t^{-y_i}$ in the above, and note that the ratio is well-defined as long as $t^{-y_i+1}q^j \neq t^{-y_{i'}}q^{x'_{i'}+1}$ and $t^{-y_i}q^{j+1} \neq t^{-y_{i'}+1}q^{x_{i'}}$ for any $i<i'$ and $x_i \leq j \leq x_i'$. This is equivalent to the following condition, obviously satisfied for a skew Young diagram:
	$$
	\textbf{Condition:} \text{ the box directly to the right/left of the strip }D_{i'}
	$$
	$$
	\text{ does not lie directly below/above the strip }D_i, \forall \ i<i'
	$$
	
\end{proof}

\begin{proof} \textbf{of Proposition \ref{prop:eval}:} Let us write $R = P_k^{m/n}$ and $R' = P_k^{-m/n}$ throughout the proof, and we will prove by induction on $l$ that:
	$$
	R(\mu_l) = c_l\sum_{i=0}^{n-1} q^{\sum_{j=1}^{nk-l} \left \lfloor \frac {mj+i}n \right \rfloor} t^{-\sum_{j=1}^{l-1} \left \lceil \frac {mj-i}n \right \rceil}
	$$
	where $c_l = \frac {1-q^{-k}}{1-tq^{-1}} \prod_{i=1}^{nk-l} \frac {1-q^{-i}}{1-q^{-i-1}t} \prod_{i=1}^{l-1} \frac {1-t^i}{1-t^{i+1}q^{-1}}$. The base case $l=1$ of the induction is simply the fourth bullet of Subsection \ref{sub:bull}. We set $\rho = (1,...,1,nk-l+1)$ and note that:
	\begin{equation}
	\label{eqn:deneuve}
	R(\mu_l) = R^\rho(y_1,...,y_l) \ \Big |_{y_i = t^{i-l}}	
	\end{equation}
	in the notation from the proof of Lemma \ref{lem:uniqe}. We have:
	$$
	\Delta(p_k) = p_k \otimes 1 + 1 \otimes p_k \quad \Longrightarrow \quad \Delta_{m/n}(R) = R \otimes 1 + 1 \otimes R	
	$$
	which implies that $R_{1,i} = R_{2,i} = 0$ for all $i \in (0,k)$. Hence when we use \eqref{eqn:lagrange} to evaluate the RHS of \eqref{eqn:deneuve}, only the second term appears:
	\begin{equation}
	\label{eqn:lagy}
	R^\rho(y_1,...,y_l) = \sum^{i<l}_{a \in S^\pm_{il}} \gamma^{\pm}_{i,a} \cdot R^{\rho(i \stackrel{a}\leftrightarrow l)}(y_1,...,y_l)|_{y_l q^a = y_i}  
	\end{equation}
	in the notation of \eqref{eqn:lagrange}. In the case at hand, we have $\rho = (1,...,1,nk-l+1)$, and hence it is easy to see that the sets $S_{il}^\pm$ are given by:
	$$
	S_{il}^+ = \{nk-l+1\}, \qquad S_{il}^- = \{-1\}, \qquad \forall i \in \{1,...,l-1\}
	$$
	By \eqref{eqn:deneuve}, we need to evaluate the RHS \eqref{eqn:lagy} at $y_i = t^{i-l}$. We observe that all the constants $\gamma_{i,a}^\pm$ (where $a$ is the unique element of $S_{il}^\pm$) vanish, except for:
	$$
	\gamma_{1l}^+ \ \Big |_{y_i = t^{i-l}} = \frac {c_l}{c_{l-1}} \cdot \left(t^{l-1}q^{nk-l+1}\right)^{\left \lfloor \frac {m(nk-l+1)}n \right \rfloor} \frac {1-tq^{-1}}{1-t^{2-l}q^{-nk+l-2}} 
	$$
	and:
	$$
	\gamma_{l-1,l}^- \ \Big |_{y_i = t^{l-i}} = \frac {c_l}{c_{l-1}} \cdot \left(tq^{-1} \right)^{\left \lfloor \frac {m(nk-l+1)}n \right \rfloor + 1} \frac {1 - t^{1-l} q^{-nk+l-1}}{1 - t^{2-l} q^{-nk+l-2}} 
	$$
	With this in mind, \eqref{eqn:lagrange} and \eqref{eqn:lagy} imply that:
	$$
	R(\mu_l) = \gamma_{1l}^+ \ \Big |_{y_i = t^{i-l}} \cdot R(q^{-nk+l-1}t^{1-l},...,q^{-1}t^{1-l}, t^{1-l},t^{2-l},...,t^{-1}) + 
	$$
	$$
	+\gamma_{l-1,l}^- \ \Big |_{y_i = t^{l-i}} \cdot R(t^{1-l},...,t^{-1},t^{-1}q,...,t^{-1}q^{nk-l+1})
	$$
	We may apply homogeneity of $R$ and \eqref{eqn:inverse} to write the above as:
	$$
	R(\mu_l) = \gamma_{1l}^+ \ \Big |_{y_i = t^{i-l}} t^{(1-l)km} \cdot R'(\mu_{l-1})+ \gamma_{l-1,l}^- \ \Big |_{y_i = t^{l-i}} t^{-km} \cdot R(\mu_{l-1})
	$$
	Plugging in the values of $R(\mu_{l-1})$ and $R'(\mu_{l-1})$ from the induction hypothesis in the above gives us:
	$$
	R(\mu_l) = c_l \left[ \frac {\left(t^{l-1}q^{nk-l+1}\right)^{\left \lfloor \frac {m(nk-l+1)}n \right \rfloor} (1-tq^{-1})}{t^{(l-1)km}(1-t^{2-l}q^{-nk+l-2})} \sum_{i=0}^{n-1} q^{\sum_{j=1}^{nk-l+1} \left \lfloor \frac {-mj+i}n \right \rfloor} t^{-\sum_{j=1}^{l-2} \left \lceil \frac {-mj-i}n \right \rceil}  + \right.
	$$
	$$
	\left. + \frac {\left(tq^{-1} \right)^{\left \lfloor \frac {m(nk-l+1)}n \right \rfloor + 1}(1 - t^{1-l} q^{-nk+l-1})}{t^{km}(1-t^{2-l}q^{-nk+l-2})} \sum_{i=0}^{n-1} q^{\sum_{j=1}^{nk-l+1} \left \lfloor \frac {mj+i}n \right \rfloor} t^{-\sum_{j=1}^{l-2} \left \lceil \frac {mj-i}n \right \rceil} \right]
	$$
We wish to show that the RHS equals $c_l \sum_{i=0}^{n-1} q^{\sum_{j=1}^{nk-l} \left \lfloor \frac {mj+i}n \right \rfloor} t^{-\sum_{j=1}^{l-1} \left \lceil \frac {mj-i}n \right \rceil}$, i.e.:
$$
t^{(l-1)\left \lfloor \frac {m(1-l)}n \right \rceil} q^{(nk-l+1)\left \lfloor \frac {m(nk-l+1)}n \right \rfloor} (1-tq^{-1}) \sum_{i=0}^{n-1} q^{\sum_{j=1}^{nk-l+1} \left \lfloor \frac {-mj+i}n \right \rfloor} t^{\sum_{j=1}^{l-2} \left \lfloor \frac {mj+i}n \right \rfloor}  +
$$
$$
 + t^{\left \lfloor \frac {m(1-l)}n \right \rfloor + 1} q^{- \left \lfloor \frac {m(nk-l+1)}n \right \rfloor - 1} (1 - t^{1-l} q^{-nk+l-1}) \sum_{i=0}^{n-1} q^{\sum_{j=1}^{nk-l+1} \left \lfloor \frac {mj+i}n \right \rfloor} t^{\sum_{j=1}^{l-2} \left \lfloor \frac {-mj+i}n \right \rfloor} =
$$
\begin{equation}
\label{eqn:desire}
=(1-t^{2-l}q^{-nk+l-2}) \sum_{i=0}^{n-1} q^{\sum_{j=1}^{nk-l} \left \lfloor \frac {mj+i}n \right \rfloor} t^{\sum_{j=1}^{l-1} \left \lfloor \frac {-mj+i}n \right \rfloor}
\end{equation}
Proving \eqref{eqn:desire} will be an elementary computation. We will use the identities:
$$
\left \lfloor \frac {mz}n \right \rfloor + \left \lfloor \frac {-mj+i}n \right \rfloor= \left \lfloor \frac {m(z-j)}n \right \rfloor + \delta^{mj \text{ mod }n}_{\in \{1,...,i\}}  - \delta^{mj \text{ mod }n}_{\in \{1,...,mz \text{ mod }n\}} 
$$
$$
\left \lfloor \frac {mj+i}n \right \rfloor = \left \lfloor \frac {mj}n \right \rfloor + \delta^{-mj \text{ mod }n}_{\in \{1,...,i\}} %= \left \lceil \frac {mj}n \right \rceil - \delta^{mj \text{ mod }n}_{\in \{1,...,n-i-1\}}
$$
for all $g,i,j$, in order to obtain the following equalities for all $i<n$:
$$
q^{\sum_{j=1}^{nk-l+1} \left \lfloor \frac {m(nk-l+1)}n \right \rfloor + \left \lfloor \frac {-mj+i}n \right \rfloor} = q^{\sum_{j=1}^{nk-l} \left \lfloor \frac {mj}n \right \rfloor} \cdot q^{\sum_{j=1}^{nk-l+1} \delta^{mj \text{ mod }n}_{\in \{1,...,i\}} - \delta^{mj \text{ mod }n}_{\in \{1,...,m(1-l)\text{ mod }n\}}}
$$
$$
t^{\sum_{j=1}^{l-1} \left \lfloor \frac {m(1-l)}n \right \rfloor + \left \lfloor \frac {m(l-1-j)+i}n \right \rfloor} = t^{\sum_{j=1}^{l-1} \left \lfloor \frac {-mj}n \right \rfloor}  \cdot t^{\sum_{j=1}^{l-1} \delta^{-mj \text{ mod }n}_{\in \{1,...,i\}} - \delta^{-mj \text{ mod }n}_{\in \{1,...,m(1-l)\text{ mod }n\}}}
$$
$$
q^{\sum_{j=1}^{nk-l} \left \lfloor \frac {mj+i}n \right \rfloor} = q^{\sum_{j=1}^{nk-l} \left \lfloor \frac {mj}n \right \rfloor} \cdot q^{\sum_{j=1}^{nk-l} \delta^{-mj \text{ mod }n}_{\in \{1,...,i\}}}
$$
$$
t^{\sum_{j=1}^{l-1} \left \lfloor \frac {-mj+i}n \right \rfloor} = t^{\sum_{j=1}^{l-1} \left \lfloor \frac {-mj}n \right \rfloor} \cdot t^{\sum_{j=1}^{l-1}\delta^{mj \text{ mod }n}_{\in \{1,...,i\}}}
$$
With this, \eqref{eqn:desire} reduces to the following:
$$
(1-tq^{-1}) \sum_{i=0}^{n-1} \frac {q^{\sum_{j=1}^{nk-l+1} \delta^{mj \text{ mod }n}_{\in \{1,...,i\}}-\sum_{j=1}^{nk-l+1} \delta^{mj \text{ mod }n}_{\in \{1,...,m(1-l)\text{ mod }n\}}}}{t^{-\sum_{j=0}^{l-2} \delta^{-mj \text{ mod }n}_{\in \{1,...,i\}}+\sum_{j=0}^{l-2} \delta^{-mj \text{ mod }n}_{\in \{1,...,m(1-l)\text{ mod }n\}}}} =
$$
$$
= - (tq^{-1} - t^{2-l} q^{-nk+l-2}) \sum_{i=0}^{n-1} \frac {q^{\sum_{j=1}^{nk-l+1} \delta^{-mj \text{ mod }n}_{\in \{1,...,i\}} }}{t^{-\sum_{j=0}^{l-2}\delta^{mj \text{ mod }n}_{\in \{1,...,i\}}}} + (1-t^{2-l}q^{-nk+l-2}) \sum_{i=0}^{n-1} \frac {q^{\sum_{j=1}^{nk-l} \delta^{-mj \text{ mod }n}_{\in \{1,...,i\}} }}{t^{-\sum_{j=0}^{l-1}\delta^{mj \text{ mod }n}_{\in \{1,...,i\}}}}
$$
We may simplify the last line to reduce the above formula to:
$$
\sum_{i=0}^{n-1} \frac {q^{\sum_{j=0}^{nk-l+1} \delta^{mj \text{ mod }n}_{\in \{1,...,i\}}-\sum_{j=0}^{nk-l+1} \delta^{mj \text{ mod }n}_{\in \{1,...,x\}}}}{t^{-\sum_{j=1}^{l-2} \delta^{-mj \text{ mod }n}_{\in \{1,...,i\}}+\sum_{j=1}^{l-2} \delta^{-mj \text{ mod }n}_{\in \{1,...,x\}}}}=
$$
$$
= \sum_{i=0}^{n - x -1} \frac {q^{\sum_{j=0}^{nk-l+1} \delta^{-mj \text{ mod }n}_{\in \{1,...,i\}} }}{t^{-\sum_{j=1}^{l-2}\delta^{mj \text{ mod }n}_{\in \{1,...,i\}}}} + \sum_{i = n-x}^{n-1} \frac {q^{-\sum_{j=0}^{nk-l+1} \delta^{-mj \text{ mod }n}_{\in \{i+1,...,n\}} }}{t^{\sum_{j=1}^{l-2}\delta^{mj \text{ mod }n}_{\in \{i+1,...,n\}}}}
$$
where we write $x = m(1-l)$ mod $n$. The above equality is easily seen to hold term by term, with the summands corresponding to $i\geq x$ (respectively, $i\leq x$) in the LHS corresponding to the first (respectively, second) summand in the RHS.

\end{proof}

\end{document}